\newcommand{\id}{\operatorname{id}}
\newcommand{\val}{\operatorname{val}}
\newcommand{\VAL}{\operatorname{VAL}}
   \theoremstyle{plain}
   \newtheorem{thm}{Theorem}[section]
   \newtheorem{prop}[thm]{Proposition}
   \newtheorem{lemma}[thm]{Lemma}  
   \theoremstyle{definition}
   \newtheorem{example}[thm]{Example}
   \theoremstyle{remark}
   \newtheorem{remark}[thm]{Remark}
\newtheorem{assumption}[thm]{Assumption}
   \numberwithin{equation}{section}
        \date{\today}
\title[KMS states and conformal measures]{KMS states and conformal measures}
\author{Klaus Thomsen}
\date{\today}
\email{matkt@imf.au.dk}
\address{Institut for matematiske fag, Ny Munkegade, 8000 Aarhus C, Denmark}
\begin{document}

\maketitle

\begin{abstract} From a non-constant holomorphic map on a connected
  Riemann surface we construct an \'etale second countable locally
  compact Hausdorff 
  groupoid whose associated groupoid $C^*$-algebra admits a
  one-parameter group of automorphisms with the property that its
  KMS states corresponds to conformal measures in the sense of
  Sullivan. In this way certain quadratic polynomials give rise to quantum statistical models
  with a phase transition arising from spontaneous symmetry breaking.  
\end{abstract}

\section{Introduction}\label{sec0} It was shown by D. Sullivan,
\cite{S}, that for any rational map $R$ on the Riemann sphere there is a
Borel probability measure $m$ on the Julia set $J_R$ and an exponent
$\delta \in ]0,2]$ such that
$$
m(R(A)) = \int_A \left|R'(z)\right|^{\delta} \ dm(z)
$$
for every Borel subset $A \subseteq J_R$ where $R$ is injective. Such
measures were subsequently called \emph{conformal} and many results
have been obtained about them, in particular results on their
uniqueness or non-uniqueness and on the values of the exponent
$\delta$ for various classes of rational maps. Nonetheless it seems
fair to say that they remain rather mysterious in general. The main purpose
with this paper is to relate these measures to KMS states of a
one-parameter group of automorphisms on a $C^*$-algebra naturally
associated to the rational map. Both in terms of intention and tools the approach we take is much
in the spirit of D. Ruelle who did a similar thing for
Gibbs measures of hyperbolic diffeomorphisms in \cite{Ru}. Thus we
associate to a rational map, and in fact to any non-constant holomorphic map on a
connected Riemann surface, an \'etale groupoid such that its (reduced)
$C^*$-algebra can be defined as described by Renault in \cite{Re}. The
same construction works for any totally invariant subset which is
locally compact in the relative topology and has no isolated
points. For rational maps on the Riemann sphere this means that as far
as the construction of the groupoid and its $C^*$-algebra is concerned, we can
treat the Julia set or the Fatou set in the same way as the whole sphere. By
construction the groupoid comes equipped with a natural real-valued
homomorphism and the corresponding one-parameter group of
automorphisms has the property that there is a one-to-one correspondence between
the non-atomic conformal measures with exponent $\delta$ for the
holomorphic map and a face
in the weak$^*$ closed set of its $\delta$-KMS states. The
correspondence extends to atomic measures, but for them the relation
is more complicated, and in particular the map from KMS states to
measures is generally not injective. As an illustration of the general results we show how certain
quadratic polynomials in this way give rise to quantum statistical models with a
phase transition arising from spontaneous symmetry breaking in the
sense of Bost and Connes, \cite{BoC}.

The main tool for the identification of the KMS states is a recent
result of Neshveyev, \cite{N}, in which he extends results of Renault
to obtain a general description of the KMS states for the
one-parameter group of automorphisms arising from a real-valued
homomorphism on a second countable locally compact \'etale
groupoid. Since the groupoids we construct have the property that all
isotropy groups are abelian and the points in the unit space with
non-trivial isotropy group are at most countable, the results of
Neshveyev can be transferred from the full to the reduced
groupoid $C^*$-algebra and be given slightly more
detailed formulations. We do this in the first section of the paper
before we move to the main part where we construct the \'etale
groupoid of a holomorphic map and the relevant one-parameter group of
automorphisms, which we call \emph{the conformal action}, on its
$C^*$-algebra. We obtain a general description of the $\beta$-KMS
states for the conformal action
when $\beta \neq 0$ and illustrate it by considering the restriction to
the Julia set $J_R$ of a quadratic polynomial $R$ which satisfies the Collet-Eckmann
condition. Thanks to results of Graczyk and Smirnov, \cite{GS1},
\cite{GS2}, we can in this case give a 
complete description of the $\beta$-KMS states for the conformal
action for positive $\beta$. When the critical point is pre-periodic
there is only one KMS state, corresponding to the Sullivan
measure with exponent equal to the Hausdorff dimension $HD(J_R)$ of $J_R$. When
the critical point is not pre-periodic there is no $\beta$-KMS state
for $0 < \beta < HD(J_R)$, a unique one for $\beta = HD(J_R)$,
corresponding again to the Sullivan measure, and then two extremal $\beta$-KMS
states for any $\beta > HD(J_R)$. The presence of the latter KMS states
is caused by the summability of the Poincar\'e series for the critical
point which was established in \cite{GS2}.

The notion of conformality for measures related to dynamical systems
has been generalised in various ways, and we show in the last section
that some of these
generalisations can also be covered by the approach taken here. In
particular, when the map in question is a rational map on the Riemann
sphere we obtain a complete description of the
KMS states for the gauge action which comes naturally from the
construction of the $C^*$-algebra. This allows a direct comparison
between our construction and that of Kajiwara and Watatani in
\cite{KW} where they construct $C^*$-algebras from rational maps on
the Riemann sphere via Hilbert modules and the Cuntz-Pimsner
construction. The KMS states of the gauge action on their algebras
were described in \cite{IKW}, and there are both similarities and
differences in the structure of the KMS states when compared to the
findings in this paper. The differences show that there is generally no natural
(gauge-preserving) way to pass from their $C^*$-algebras to the ones
constructed here.      

\smallskip

\emph{Acknowledgement.} I am grateful to an anonymous referee who
found a mistake in the first version of Section \ref{gauge}.

\section{Groupoid $C^*$-algebras and KMS states}\label{sec3}

\subsection{Groupoid $C^*$-algebras and one-parameter automorphism groups}\label{sec3a}

Let $G$ be an \'etale second countable locally compact Hausdorff groupoid with unit
space $G^{(0)}$. Let $r : G \to G^{(0)}$ and $s : G \to G^{(0)}$ be
the range and source maps, respectively. For $x \in G^{(0)}$ put $G^x = r^{-1}(x), \ G_x = s^{-1}(x) \ \text{and} \ G^x_x =
s^{-1}(x)\cap r^{-1}(x)$. Note that $G^x_x$ is a group, the \emph{isotropy group} at $x$. The space $C_c(G)$ of continuous compactly supported
functions is a $*$-algebra when the product is defined by
$$
(f_1 * f_2)(g) = \sum_{h \in G^{r(g)}} f_1(h)f_2(h^{-1}g)
$$
and the involution by $f^*(g) = \overline{f\left(g^{-1}\right)}$. The
\emph{full groupoid $C^*$-algebra} $C^*(G)$ is the completion of $C_c(G)$ with
respect to the norm
$$
\left\|f\right\| = \sup_{\pi} \left\|\pi(f)\right\|,
$$
where the supremum is taken over all $*$-representations $\pi$ of
$C_c(G)$ on a Hilbert space. 
To define the \emph{reduced groupoid
  $C^*$-algebra} $G^*_r(G)$ let $x\in G^{(0)}$. There is a
representation $\pi_x$ of $C_c(G)$ on the Hilbert space $l^2(G_x)$ of
square-summable functions on $G_x$ given by 
\begin{equation}\label{pix}
\pi_x(f)\psi(g) = \sum_{h \in G^{r(g)}} f(h)\psi(h^{-1}g) .
\end{equation}
$C^*_r(G)$ is the completion of $C_c(G)$ with respect to the norm
$$
\left\|f\right\|_r = \sup_{x \in G^{(0)}} \left\|\pi_x(f)\right\| .
$$  
Since $\|f\|_r \leq \|f\|$ there is a canonical surjection $\lambda : C^*(G)
\to C^*_r(G)$. 

We are mainly interested in the reduced groupoid
$C^*$-algebra and have primarily introduced the full version in order
to use the results of Neshveyev, \cite{N}. The inclusion $C_c\left(G^{(0)}\right) \subseteq C_c(G)$
extends to embeddings of $C_0\left(G^{(0)}\right)$ into both $C^*(G)$
and $C^*_r(G)$ and $\lambda$ restricts to an isomorphism between
the two copies of $C_0\left(G^{(0)}\right)$ which we therefore
identify. 
Note that the map $C_c(G) \to C_c\left(G^{(0)}\right)$ which
restricts functions to $G^{(0)}$ extends to a conditional expectation $P
: C^*_r(G) \to C_0\left(G^{(0)}\right)$. 

Let $c : G \to \mathbb R$ be a continuous homomorphism, i.e. $c$ is
continuous and $c(gh) = c(g) + c(h)$ when $s(g) = r(h)$. For each $t
\in \mathbb R$ we can then define an automorphism $\sigma_t^c$ of
$C_c(G)$ such that
$$
\sigma^c_t(f)(g) = e^{itc(g)}f(g) .
$$ 
For each $x \in G^{(0)}$ the same expression defines a unitary $u_t$ in
$l^2(G_x)$ such that $u_t\pi_x(f)u_t^* =
\pi_x\left(\sigma^c_t(f)\right)$ and it follows therefore that
$\sigma^c_t$ extends by continuity to an automorphism $\sigma^c_t$ of
$C^*_r\left(G\right)$. It is easy to see that $\sigma^c =
\left(\sigma^c_t\right)_{t \in \mathbb R}$ is a continuous
one-parameter group of automorphisms of
$C^*_r\left(G\right)$. 
In the same way $c$ gives also rise to a continuous
one-parameter group of automorphisms of the full groupoid $C^*$-algebra
$C^*(G)$ 
and $\lambda : C^*(G) \to C^*_r(G)$ is then equivariant.

\subsection{KMS states}\label{A}

Let $A$ be a $C^*$-algebra and $\alpha_t, t \in \mathbb R$, a continuous
one-parameter group of automorphisms of $A$. Let
$\beta \in \mathbb R$. A state $\omega$ of $A$ is a \emph{$\beta$-KMS state} when 
$$
\omega(a\alpha_{i\beta}(b)) = \omega(ba)
$$
for all elements $a,b$ of a dense $\alpha$-invariant $*$-algebra of $\alpha$-analytic elements,
cf. \cite{BR}. In this section we use the results of Neshveyev from
\cite{N} to study the KMS-states of $\sigma^c$ on $C^*_r(G)$ under a
couple of
additional assumptions which will hold for the groupoid constructed in
the following section. Since it simplifies several things we will
only consider the case $\beta \neq 0$.

For each $x \in G^{(0)}$ the subset $G^x_x$ is a closed discrete
subgroup of $G$. When $G_x^x$ is amenable the full
and reduced group $C^*$-algebras of $G^x_x$ coincide, i.e. the canonical
homomorphism $\lambda :
C^*\left(G^x_x\right) \to C^*_r\left(G_x^x\right)$ is an
isomorphism. This leads to the following

\begin{lemma}\label{lambda} Assume that $G^x_x$ is amenable for all $x
  \in G^{(0)}$. It follows that every state $\omega$ of $C^*(G)$ with
  $C_0\left(G^{(0)}\right)$ in it is centraliser (i.e. $\omega(af) =
  \omega(fa)$ for all $f \in C_0\left(G^{(0)}\right), a \in C^*(G)$)
  factorises through $C^*_r(G)$.
\end{lemma}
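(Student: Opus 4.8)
The plan is to prove the equivalent assertion that the GNS representation $(\pi_\omega,H_\omega,\xi_\omega)$ of $\omega$ is weakly contained in the regular representation $\bigoplus_{x\in G^{(0)}}\pi_x$: since $\ker\lambda$ is an ideal and $\omega(a^*a)=\|\pi_\omega(a)\xi_\omega\|^2$, weak containment gives $\ker\lambda\subseteq\ker\pi_\omega\subseteq\ker\omega$, which is precisely the statement that $\omega$ factors through $C^*_r(G)$. To exploit the centraliser hypothesis I would first observe that it forces $\omega$ to be supported on the isotropy bundle $\Sigma=\{g\in G:\ r(g)=s(g)\}$: given $g_0\notin\Sigma$ choose an open bisection $U\ni g_0$ with $\overline{s(U)}\cap\overline{r(U)}=\emptyset$ and $f\in C_c(G^{(0)})$ equal to $1$ on $\overline{r(U)}$ and to $0$ on $\overline{s(U)}$; then $f*h=h$ and $h*f=0$ for every $h\in C_c(U)$, so $\omega(h)=\omega(f*h)=\omega(h*f)=0$, and a partition of unity over the open set $G\setminus\Sigma$ upgrades this to $\omega|_{C_c(G\setminus\Sigma)}=0$.

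Next I would invoke Renault's disintegration theorem (legitimate since $G$ is second countable, so all Hilbert spaces in sight are separable) to realise $\pi_\omega$ as the integrated form of a representation $(\mu,\{H_x\}_x,L)$ of $G$ on $\int^{\oplus}H_x\,d\mu(x)$, where $\mu$ is the measure on $G^{(0)}$ determined by $\omega|_{C_0(G^{(0)})}$, $\{H_x\}$ is a measurable Hilbert bundle, $L=\{L(g)\colon H_{s(g)}\to H_{r(g)}\}$ a measurable unitary $G$-cocycle, and the cyclic vector a unit section $(\xi_x)_x$. By the previous paragraph the matrix coefficient $g\mapsto\langle L(g)\xi_{s(g)},\xi_{r(g)}\rangle$ is carried by $\Sigma$, so the whole of $\omega$ is encoded by $\mu$ together with the measurable field $x\mapsto L|_{G^x_x}$ of unitary representations of the discrete isotropy groups and the vectors $\xi_x$. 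Here amenability enters: for $\mu$-a.e.\ $x$ the representation $L|_{G^x_x}$ is weakly contained in the left regular representation of $G^x_x$, because $\lambda\colon C^*(G^x_x)\to C^*_r(G^x_x)$ is an isomorphism.

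The last, and hardest, step is to propagate this fibrewise weak containment back up to $\pi_\omega$, i.e.\ to show that the integrated form of a representation of $G$ whose restriction to the isotropy bundle is fibrewise weakly contained in the regular representations of the isotropy groups is itself weakly contained in $\bigoplus_x\pi_x$; together with the disintegration this yields $\|\pi_\omega(f)\|\le\sup_x\|\pi_x(f)\|=\|f\|_r$ for $f\in C_c(G)$ and finishes the proof. This is an induction-from-the-isotropy-subgroupoid argument, running parallel to the familiar fact that $C^*_r(G)$ carries a conditional expectation onto $C^*_r(\Sigma)$, and it also requires the measurability bookkeeping for the field $(L|_{G^x_x})_x$ to be handled with care. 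Rather than redo these constructions I would appeal to Renault's disintegration and induction machinery and to Neshveyev's analysis in \cite{N}, which are tailored exactly to this situation; the reduction to the isotropy bundle, the disintegration, and the amenability input are otherwise routine.
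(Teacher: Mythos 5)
Your first two steps are sound: the observation that the centraliser hypothesis forces $\omega$ to be carried by the isotropy bundle, and the fibrewise use of amenability, are correct (and the first is essentially what Theorem 1.1 of \cite{N} packages). But the step you yourself identify as the hardest --- propagating the fibrewise weak containment of $L|_{G^x_x}$ in the regular representations of the isotropy groups up to weak containment of $\pi_\omega$ in $\bigoplus_{x}\pi_x$ --- is precisely the crux, and you do not prove it; you defer it to ``Renault's disintegration and induction machinery and Neshveyev's analysis''. Neither source contains a propagation theorem of this kind for disintegrated representations, so as written the argument has a hole exactly where the work has to be done. You are also aiming at more than is needed: weak containment of the whole GNS representation is stronger than the statement that the single state $\omega$ is bounded by the reduced norm.

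The device that closes this gap, and which your proposal lacks, is a pointwise norm estimate rather than a representation-theoretic induction. By Theorem 1.1 of \cite{N}, the centraliser hypothesis already gives $\omega(f)=\int_{G^{(0)}}\omega_x\left(f|_{G^x_x}\right)\,dm(x)$ for $f\in C_c(G)$, with each $\omega_x$ a state on $C^*\left(G^x_x\right)$. Amenability yields $\left|\omega_x\left(f|_{G^x_x}\right)\right|\le\left\|f|_{G^x_x}\right\|_{C^*_r\left(G^x_x\right)}$, and the decisive (but elementary) observation is that $\left\|f|_{G^x_x}\right\|_{C^*_r\left(G^x_x\right)}=\left\|P_x\pi_x(f)P_x\right\|\le\left\|\pi_x(f)\right\|\le\|f\|_r$, where $P_x$ is the orthogonal projection of $l^2\left(G_x\right)$ onto $l^2\left(G^x_x\right)$; in other words the regular representation of the isotropy group sits as a corner of $\pi_x$. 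Integrating gives $|\omega(f)|\le\|f\|_r$ on $C_c(G)$, which is all that is required for $\omega$ to factor through $C^*_r(G)$. Without this compression estimate, or an actual proof of your propagation statement, the proposal is incomplete.
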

\begin{proof} It suffices to show that $\left|\omega(f)\right| \leq \|f\|_r$
  for all $f \in C_c(G)$. It follows from Theorem 1.1 of \cite{N} that
  there is a
  Borel probability measure $m$ on $G^{(0)}$ and for each $x \in
  G^{(0)}$ a state $\omega_x$ on $C^*(G^x_x)$ such that $G^{(0)} \ni x
\mapsto \omega_x\left(f|_{G^x_x}\right)$ is Borel and
\begin{equation}\label{kul}
\omega(f) = \int_{G^{(0)}} \omega_x\left(f|_{G^x_x}\right) \ dm(x)
\end{equation}
for all $f \in C_c(G)$. Note that 
$\left|\omega_x\left(f|_{G^x_x}\right)\right| \leq
\left\|f|_{G^x_x}\right\|_{C^*\left(G^x_x\right)} =
\left\|f|_{G^x_x}\right\|_{C^*_r\left(G^x_x\right)}$ since $G_x^x$ is
amenable. It follows from the definition of $\pi_x$,
cf. (\ref{pix}), that $\left\|f|_{G^x_x}\right\|_{C^*_r\left(G^x_x\right)} =
\left\|P_x\pi_x(f)P_x\right\|$ where $P_x : l^2(G_x) \to l^2(G^x_x)$
is the orthogonal projection. Therefore
$\left|\omega_x\left(f|_{G^x_x}\right)\right| \leq
\left\|f|_{G^x_x}\right\|_{C^*_r\left(G^x_x\right)} \leq \left\|\pi_x(f)\right\|$
for each $x$ and then (\ref{kul}) shows that $\left|\omega(f)\right| \leq \|f\|_r$ as desired. 
\end{proof}

Note that
the measure $m$ in \ref{kul} is determined by the condition that
$\omega(f) = \int_{G^{(0)}} f \ dm$ for all
$C_c\left(G^{(0)}\right)$. We say that $m$ is the measure
\emph{associated with $\omega$}. 

Since every KMS state for $\sigma^c$ has $C_0\left(G^{(0)}\right)$ in
its centraliser it follows from Lemma \ref{lambda} that the map $\lambda : C^*(G) \to C^*_r(G)$
induces a bijection from the KMS states of the one-parameter group
$\sigma^c$ on $C^*_r(G)$ onto its KMS states on $C^*(G)$. The following definition characterises the
Borel probability measures which arise from KMS states in this way. Let $W
\subseteq G$ be an open bi-section, i.e. an open subset such that $r :
W \to G^{(0)}$ and $s: W \to G^{(0)}$ are both injective. Then $r : W \to r(W)$ is a homeomorphism and we
denote its inverse by $r_W^{-1}$. Let $\beta
\in \mathbb R \backslash \{0\}$. We say that a finite Borel measure $m$ on $G^{(0)}$ is
\emph{$(G,c)$-conformal with exponent $\beta$} when
\begin{equation}\label{Gconf}
m\left(s(W)\right) = \int_{r(W)} e^{\beta c\left(r_W^{-1}(x)\right)} \ dm(x)
\end{equation}
for every open bi-section $W$ of $G$. Note that in the terminology
used in \cite{N} this condition means that $m$ is
quasi-invariant with Radon-Nikodym cocycle $e^{-\beta c}$.

Assume now that 
\begin{enumerate} 
\item[a)] $G^x_x$ is abelian for all $x \in G^{(0)}$ and
\item[b)] $G_x^x = \{x\}$ for all but at most countably many $x \in
  G^{(0)}$.
\end{enumerate}

In the following we call a finite Borel measure $m$ on $G^{(0)}$
\emph{non-atomic} when $m\left(\{x\}\right) = 0$ for all $x \in
G^{(0)}$ and \emph{purely atomic} when there is a Borel set $A
\subseteq G^{(0)}$ such that $m(A) = m\left(G^{(0)}\right)$ and $m(\{a\}) > 0$ for all $a
\in A$. Similarly we
will say that a KMS state for $\sigma^c$ is \emph{non-atomic} when its
associated measure is non-atomic, and \emph{purely atomic} when it is
purely atomic. 

. 

\begin{lemma}\label{decomp!!} Let $m$ be a finite Borel measure on
  $G^{(0)}$ and let $m = m^c + m^a$ be a decomposition of $m$ into the
  sum of the
  non-atomic measure $m^c$ and the purely atomic measure $m^a$. It
  follows that $m$ is $(G,c)$-conformal with exponent $\beta$ if and only
  if $m^c$ and $m^a$ both are.
\end{lemma}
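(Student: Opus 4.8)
The plan is to get one implication for free from linearity of the integral and to reduce the other one to a single pointwise identity about the atoms of $m$. For the ``if'' direction, suppose $m^c$ and $m^a$ are both $(G,c)$-conformal with exponent $\beta$; then for every open bi-section $W$ we add the two instances of (\ref{Gconf}) and use linearity to conclude that $m=m^c+m^a$ satisfies (\ref{Gconf}) as well, so $m$ is $(G,c)$-conformal.

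For the converse, assume $m$ is $(G,c)$-conformal with exponent $\beta$. Since $m$ is finite its atom set $A=\{x\in G^{(0)}:m(\{x\})>0\}$ is at most countable, and any decomposition as in the statement is necessarily the standard one, $m^a=\sum_{a\in A}m(\{a\})\delta_a$ and $m^c=m-m^a$. It therefore suffices to show that $m^a$ is $(G,c)$-conformal, for then $m^c=m-m^a$ is too, by subtracting (\ref{Gconf}) for $m^a$ from (\ref{Gconf}) for $m$. Writing $\tau_W:=s\circ r_W^{-1}:r(W)\to s(W)$ for the homeomorphism attached to an open bi-section $W$, the crucial point is the identity
\[
m(\{\tau_W(x)\})\;=\;e^{\beta c(r_W^{-1}(x))}\,m(\{x\}),\qquad x\in r(W).
\]

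To prove this I would apply (\ref{Gconf}) to the open bi-sections $r_W^{-1}(U_n)\subseteq W$, where $(U_n)$ is a decreasing sequence of open neighbourhoods of $x$ inside $r(W)$ with $\bigcap_n U_n=\{x\}$ --- such a sequence exists because $G^{(0)}$ is second countable and Hausdorff --- chosen small enough that the continuous, strictly positive function $e^{\beta c\circ r_W^{-1}}$ is bounded on $U_1$. The left-hand side of (\ref{Gconf}) for $r_W^{-1}(U_n)$ is $m(\tau_W(U_n))$, which decreases to $m(\{\tau_W(x)\})$ by continuity of the finite measure $m$ from above; the right-hand side is $\int_{U_n}e^{\beta c(r_W^{-1}(y))}\,dm(y)$, which tends to $e^{\beta c(r_W^{-1}(x))}\,m(\{x\})$ by dominated convergence. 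Equating the limits gives the identity.

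Since $e^{\beta c(\cdot)}$ is everywhere strictly positive, the identity shows that $\tau_W$ maps $A\cap r(W)$ bijectively onto $A\cap s(W)$. Granting this, the conformality of $m^a$ is bookkeeping: for any open bi-section $W$,
\[
\int_{r(W)}e^{\beta c(r_W^{-1}(x))}\,dm^a(x)=\sum_{a\in A\cap r(W)}e^{\beta c(r_W^{-1}(a))}m(\{a\})=\sum_{a\in A\cap r(W)}m(\{\tau_W(a)\})=\sum_{b\in A\cap s(W)}m(\{b\})=m^a(s(W)),
\]
the last-but-one equality being the re-indexing $b=\tau_W(a)$. This is (\ref{Gconf}) for $m^a$, which finishes the argument. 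I expect the one delicate point to be the pointwise identity, i.e.\ the justification that one may pass to the limit along bi-sections shrinking to a single point on both sides of (\ref{Gconf}); this is exactly where second countability of $G^{(0)}$ (a countable decreasing neighbourhood basis at $x$), continuity of $c$ (local boundedness of the density), and finiteness of $m$ (a dominating function and continuity from above) are used. The reduction, the ``if'' direction and the re-indexing are all routine.
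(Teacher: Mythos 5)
Your proof is correct, but it takes a somewhat different route from the paper's. The paper first upgrades the defining identity (\ref{Gconf}) from open subsets of $r(W)$ to all Borel subsets by outer regularity of the two set functions $B \mapsto m\left(s(r_W^{-1}(B))\right)$ and $B \mapsto \int_B e^{\beta c\left(r_W^{-1}(x)\right)}dm(x)$, obtaining (\ref{borelconf}); it then deduces that the atom set $E$ satisfies $s\left(r^{-1}(E)\right)=E=r\left(s^{-1}(E)\right)$ and splits the Borel-level identity over $r(W)\setminus E$ and $r(W)\cap E$ to get conformality of $m^c$ and $m^a$ simultaneously. You instead avoid the Borel-level extension: you prove the pointwise transformation rule $m(\{\tau_W(x)\})=e^{\beta c\left(r_W^{-1}(x)\right)}m(\{x\})$ by applying (\ref{Gconf}) to bi-sections shrinking to a point and passing to the limit (continuity from above plus dominated convergence, using second countability, continuity of $c$ and finiteness of $m$), deduce that $\tau_W$ carries atoms to atoms, obtain (\ref{Gconf}) for $m^a$ by direct summation and re-indexing, and then get $m^c$ by subtraction of finite integrals; your observation that the decomposition into non-atomic and purely atomic parts is unique is also correct and harmless. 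Both arguments are sound. The paper's approach has the advantage that the Borel-level identity (\ref{borelconf}) is reused immediately afterwards (e.g.\ in Lemma \ref{sumconsist}, where it is applied to singletons to get exactly your pointwise identity), so it does double duty; your approach is more elementary in that it never invokes regularity of the image measures, at the cost of a slightly longer limiting argument at each atom.
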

\begin{proof} Assume that $m$ is $(G,c)$-conformal with exponent
  $\beta$. Let $W \subseteq G $ be an open bi-section. When $V$ is an
  open subset of $r(W)$, the set $r_W^{-1}(V)$ is an open bi-section so it
  follows that $m\left(s(r_W^{-1}(V))\right) = \int_{V} e^{\beta c\left(r_W^{-1}(x)\right)} \
  dm(x)$ for every open subset $V \subseteq r(W)$. By (outer) regularity of the measures on $r(W)$ given by $B \mapsto
  m\left(s\left(r_W^{-1}(B)\right)\right)$ and $B \mapsto \int_{B}
  e^{\beta c\left(r_W^{-1}(x)\right)} \ dm(x)$ it follows that
\begin{equation}\label{borelconf}  
m\left(s(r_W^{-1}(B))\right) = \int_{B} e^{\beta c\left(r_W^{-1}(x)\right)} \
  dm(x)
\end{equation}
for every Borel subset $B \subseteq r(W)$. Let $E$ be the set of atoms
for $m$. Since $G$ is covered by bi-sections it follows from
(\ref{borelconf}) that
  $s\left(r^{-1}(E)\right) = E = r\left(s^{-1}(E)\right)$; a
  conclusion which can be put back into (\ref{borelconf}) to give
\begin{equation*}
\begin{split}
&m^c(s(W)) = m\left(s(W) \backslash E\right) = m\left(s(r_W^{-1}\left(r(W)
    \backslash E\right)\right) \\
&= \int_{r(W) \backslash E} e^{\beta
  c\left(r_W^{-1}(x)\right)} \ dm(x) = \int_{r(W)} e^{\beta c\left(r_W^{-1}(x)\right)} \ dm^c(x) .
\end{split}
\end{equation*}
Similarly, $m^a(s(W)) = \int_{r(W)} e^{\beta c\left(r_W^{-1}(x)\right)} \ dm^a(x)$
and we conclude that both $m^c$ and $m^a$ are $(G,c)$-conformal with
exponent $\beta$. The converse is trivial. 
\end{proof}

Let $x \in G^{(0)}$. The \emph{$G$-orbit} $Gx$ of $x$ is the set
$G x=r(G_x)$ . We say that $Gx$ is \emph{consistent} when
$c\left(G^x_x\right) = 0$. When this holds we can define a map
$l_x : Gx \to ]0,\infty[$ such that $l_x(z) = e^{-c(g)}$ where
$g$ is any element of $r^{-1}(z) \cap G_x$. We say that $Gx$ is
\emph{$\beta$-summable} when it is consistent and
$$
\sum_{z \in Gx}  l_x(z)^{\beta} < \infty.
$$ 

\begin{lemma}\label{sumconsist} Let $m$ be a finite Borel measure on
  $G^{(0)}$ which is $(G,c)$-conformal with exponent $\beta$. Assume that
  $m\left(\{x\}\right) > 0$. It follows that the $G$-orbit $Gx$ is consistent and
  $\beta$-summable. Furthermore,
  $m(\{y\}) > 0$ for all $y \in Gx$.
\end{lemma}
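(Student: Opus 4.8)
The plan is to apply the refined conformality identity \textup{(\ref{borelconf})}---valid for every open bi-section $W$ and every Borel set $B \subseteq r(W)$, as established in the proof of \reflemma{decomp!!}---with $B$ a single point. First I would prove that $Gx$ is consistent. Given $h \in G^x_x$, choose an open bi-section $W$ containing $h$, which is possible since $G$ is \'etale. Because $r$ is injective on $W$ and $r(h) = x$, the unique element of $W$ with range $x$ is $h$ itself, so $r_W^{-1}(x) = h$ and $s\bigl(r_W^{-1}(\{x\})\bigr) = \{s(h)\} = \{x\}$. Then \textup{(\ref{borelconf})} with $B = \{x\}$ collapses to $m(\{x\}) = e^{\beta c(h)} m(\{x\})$, and since $m(\{x\}) > 0$ and $\beta \neq 0$ this forces $c(h) = 0$. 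As $h$ was arbitrary, $c\bigl(G^x_x\bigr) = 0$, so $Gx$ is consistent and the function $l_x$ is well defined on it.

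Next I would establish that $m(\{y\}) > 0$ for every $y \in Gx$, together with the mass formula $m(\{y\}) = l_x(y)^{\beta} m(\{x\})$. Fix $y \in Gx = r(G_x)$ and choose $g \in G_x$ with $r(g) = y$; then $s(g) = x$ since $G_x = s^{-1}(x)$. Take an open bi-section $W$ containing $g$. As before $r_W^{-1}(y) = g$ and $s\bigl(r_W^{-1}(\{y\})\bigr) = \{s(g)\} = \{x\}$, so \textup{(\ref{borelconf})} with $B = \{y\}$ reads $m(\{x\}) = e^{\beta c(g)} m(\{y\})$, whence $m(\{y\}) = e^{-\beta c(g)} m(\{x\}) = l_x(y)^{\beta} m(\{x\}) > 0$. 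This gives the last assertion of the lemma.

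For $\beta$-summability I would first note that the orbit $Gx$ is countable: since $G$ is \'etale the fibre $G_x = s^{-1}(x)$ is discrete, and a discrete subspace of the second countable space $G$ is countable, hence $Gx = r(G_x)$ is countable. Summing the mass formula over $z \in Gx$ and using countable additivity and finiteness of $m$ then gives
\[
\sum_{z \in Gx} l_x(z)^{\beta} \;=\; \frac{1}{m(\{x\})}\sum_{z \in Gx} m(\{z\}) \;=\; \frac{m(Gx)}{m(\{x\})} \;\leq\; \frac{m\bigl(G^{(0)}\bigr)}{m(\{x\})} \;<\; \infty ,
\]
so $Gx$ is $\beta$-summable.

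The argument is essentially routine once \textup{(\ref{borelconf})} is in hand, so I do not expect a serious obstacle. The two points that require a little care are the order of the steps---consistency must be proved first, for otherwise $l_x$ is not even well defined and the mass formula cannot be stated---and the appeal to second countability, which is what makes the orbit countable and hence lets countable additivity bound the sum. If one preferred not to quote \textup{(\ref{borelconf})}, the only additional work would be re-running the outer-regularity approximation of the singleton $\{x\}$ by open subsets of $r(W)$, exactly as in the proof of \reflemma{decomp!!}.
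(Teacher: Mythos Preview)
Your proof is correct and follows essentially the same approach as the paper: apply the Borel-set form of conformality \textup{(\ref{borelconf})} at singletons inside a bi-section to get consistency and the mass formula $m(\{y\}) = l_x(y)^{\beta} m(\{x\})$, then sum over the orbit to obtain $\beta$-summability. Your explicit remark that second countability makes the orbit countable is a small piece of extra care the paper leaves implicit, but the argument is otherwise identical.
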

\begin{proof} Let $g \in G^x_x$. There is an open bi-section $W
  \subseteq G$ such that $g \in W$. It follows therefore from
  (\ref{borelconf}) that $m(\{x\}) = e^{\beta c(g)} m(\{x\})$ which
  implies that $c(g) = 0$. Hence $Gx$ is consistent. Similarly,
  we find that $m(\{x\}) = e^{\beta c(\xi)}m(\{y\}) =
  l_x(y)^{-\beta}m(\{y\})$ when $y = r(\xi x)$,
  proving that $m(\{y\}) > 0$ for all $y \in Gx$. Finally, observe
  that $m(\{x\})\sum_{z \in Gx} l_x(z)^{\beta} = m\left(Gx\right)$
  which implies that $\sum_{z\in Gx} l_x(z)^{\beta} =
  \frac{m\left(Gx\right)}{m(\{x\})} < \infty$.
\end{proof}

Consider then a consistent and $\beta$-summable $G$-orbit
$\mathcal O = Gx$. When we denote the Dirac measure at $z$ by $\delta_z$ we
can define a Borel probability measure $m_{\mathcal O}$ on
$G^{(0)}$ such that
\begin{equation}\label{mathcalo}
m_{\mathcal O} = \left( \sum_{z \in Gx} l_x(z)^{\beta} \right)^{-1}
\sum_{z \in Gx} l_x(z)^{\beta} \delta_{z} .
\end{equation}

It is straightforward to check that $m_{\mathcal O}$ is $(G,c)$-conformal
with exponent $\beta$. Let $\varphi$ be a state on
$C^*\left(G_x^x\right)$. For each $z \in \mathcal
O$ choose an element $\xi_z \in G$ such that $r(\xi_z) = z$ and
$s\left(\xi_z\right) = x$. Then $\xi_zG_x^x\xi_z^{-1} = G^z_z$. Define a state $\varphi_z$ on
$C^*\left(G^z_z\right)$ such that
$$
\varphi_z(f) = \varphi\left(f^{\xi_z}\right),
$$ 
where $f^{\xi_z} \in C_c\left(G_x^x\right) \subseteq C^*(G^x_x)$ is the
function $f^{\xi_z}(y) = f\left(\xi_zy\xi_z^{-1}\right)$.  By using that
$C^*\left(G_x^x\right)$ is abelian for all $x$ by assumption a), a direct
calculation as in the proof of Theorem 1.3 in \cite{N} shows that there is a
$\beta$-KMS state $\omega_{\mathcal O}^{\varphi}$ for $\sigma^c$ such that
$$
\omega_{\mathcal O}^{\varphi}(h) = \left(\sum_{z \in \mathcal O}
  l_x(z)^{\beta}\right)^{-1}  \sum_{z \in \mathcal O} 
\varphi_z\left(h|_{G^z_z}\right)l_x(z)^{\beta}
$$ 
for all $h \in C_c(G)$. 


\begin{thm}\label{decomp!} Assume that a) and b) hold. Let $m$ be a non-atomic Borel probability
  measure on $G^{(0)}$. Assume that $m$ is $(G,c)$-conformal with exponent
  $\beta$, and let $\mathcal O_i, i \in I$, be a
  finite or countably infinite collection of consistent and $\beta$-summable
  $G$-orbits. Choose for each $i \in I$ an element $x_i \in
  \mathcal O_i$ and let $\varphi_i$ be a state on
  $C^*\left(G^{x_i}_{x_i}\right)$. Finally,
let $t_0,t_i \in ]0,1], i \in I$, be numbers such that $t_0 + \sum_{i
  \in I} t_i = 1$.

There is then a $\beta$-KMS state $\omega$ for $\sigma^c$ on $C^*_r(G)$
such that
\begin{equation}\label{decmpp}
\omega(a) = t_0 \int_{G^{(0)}} P(a) \ dm + \sum_{i \in I} t_i \omega_{\mathcal
  O_i}^{\varphi_i}(a)
\end{equation}
for all $a \in C^*_r\left(G\right)$. Conversely, any $\beta$-KMS state
$\omega$ for
$\sigma^c$ on $C^*_r(G)$ admits a unique decomposition of the form (\ref{decmpp}).
\end{thm}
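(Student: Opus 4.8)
The plan is to treat the two directions of Theorem~\ref{decomp!} by combining the general structure theorem of Neshveyev (\cite{N}) with the bookkeeping lemmas already established (Lemma~\ref{decomp!!} and Lemma~\ref{sumconsist}). For the existence direction, one first notes that each summand on the right of (\ref{decmpp}) is a $\beta$-KMS state for $\sigma^c$: the state $a \mapsto \int_{G^{(0)}} P(a)\,dm$ is $\beta$-KMS because $m$ is $(G,c)$-conformal with exponent $\beta$ and non-atomic — this is precisely the content of Neshveyev's description in the non-atomic case, where the measure being quasi-invariant with Radon--Nikodym cocycle $e^{-\beta c}$ (which by the remark after (\ref{Gconf}) is equivalent to $(G,c)$-conformality) forces the KMS condition; and each $\omega_{\mathcal O_i}^{\varphi_i}$ was already shown above to be a $\beta$-KMS state. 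A convex combination of $\beta$-KMS states is again a $\beta$-KMS state, so $\omega$ defined by (\ref{decmpp}) is $\beta$-KMS on $C^*_r(G)$. One should check that the infinite convex combination converges in the appropriate (weak$^*$) sense, which is routine since each $\omega_{\mathcal O_i}^{\varphi_i}$ is a state and $\sum_i t_i \le 1$.

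For the converse, start from an arbitrary $\beta$-KMS state $\omega$ for $\sigma^c$ on $C^*_r(G)$. Since every KMS state has $C_0(G^{(0)})$ in its centraliser, Lemma~\ref{lambda} (using amenability of the abelian isotropy groups, which follows from assumption a)) lets us lift $\omega$ uniquely to a $\beta$-KMS state on $C^*(G)$, and then Neshveyev's Theorem 1.1 yields the integral decomposition (\ref{kul}): there is a Borel probability measure $m$ on $G^{(0)}$ — the measure associated with $\omega$ — and a measurable field $x \mapsto \omega_x$ of states on $C^*(G^x_x)$ with $\omega(f) = \int_{G^{(0)}} \omega_x(f|_{G^x_x})\,dm(x)$ for $f \in C_c(G)$. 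The KMS property translates (by the analysis in \cite{N}) into two facts: $m$ is quasi-invariant with cocycle $e^{-\beta c}$, i.e.\ $(G,c)$-conformal with exponent $\beta$; and for $m$-almost every $x$ the state $\omega_x$ is $\sigma^c$-invariant in the appropriate sense on the isotropy group. Now split $m = m^c + m^a$ into its non-atomic and purely atomic parts. By Lemma~\ref{decomp!!} both $m^c$ and $m^a$ are $(G,c)$-conformal with exponent $\beta$. Set $t_0 = m^c(G^{(0)})$ (if $t_0 > 0$, renormalise $m^c$ to the probability measure $m$ appearing in (\ref{decmpp})); the $t_0$-part of $\omega$ is then $t_0\int P(\cdot)\,dm$, since over the non-atomic part all isotropy is trivial by assumption b) — more precisely, $m^c$ gives zero mass to the at-most-countable set of points with non-trivial isotropy, so the isotropy states $\omega_x$ play no role on the non-atomic part and the contribution is forced to be $\int P(a)\,dm^c$.

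It remains to decompose the purely atomic part. By Lemma~\ref{sumconsist}, every atom $x$ of $m^a$ lies in a $G$-orbit that is consistent and $\beta$-summable, and every point of that orbit is again an atom with $m^a(\{y\}) = l_x(y)^{-\beta} m^a(\{x\})$; thus $m^a$ restricted to such an orbit $\mathcal O$ is exactly $m^a(\mathcal O)\cdot m_{\mathcal O}$ with $m_{\mathcal O}$ as in (\ref{mathcalo}). Hence the atoms organise into an at-most-countable disjoint union of consistent $\beta$-summable orbits $\mathcal O_i$; put $t_i = m^a(\mathcal O_i)$ and choose $x_i \in \mathcal O_i$. On each orbit the restriction of the field $x \mapsto \omega_x$ to the isotropy groups, transported via the chosen $\xi_z$ to $C^*(G^{x_i}_{x_i})$, defines a single state $\varphi_i$ — single because the KMS/quasi-invariance relation ties the values of $\omega_z$ at different points $z$ of the same orbit together, exactly reversing the construction of $\omega_{\mathcal O}^{\varphi}$ above. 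Then the $\mathcal O_i$-part of $\omega$ equals $t_i\,\omega_{\mathcal O_i}^{\varphi_i}$, and summing over $i \in I$ together with the $t_0$-term recovers (\ref{decmpp}). Uniqueness follows because $m$, hence its Lebesgue decomposition $m^c, m^a$, hence the orbit decomposition of the atoms and the weights $t_0, t_i$, are all determined by $\omega$ via $\omega(f) = \int f\,dm$ for $f \in C_c(G^{(0)})$; and each $\varphi_i$ is determined by $\omega$ through the values $\omega(h)$ for $h$ supported near $G^{x_i}_{x_i}$.

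\medskip

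\textbf{Main obstacle.} The essential point — and the step that requires genuine care rather than bookkeeping — is the translation of the abstract KMS condition on $C^*_r(G)$ into the two concrete conditions on the Neshveyev data $(m, (\omega_x)_x)$, namely that $m$ is $(G,c)$-conformal and that the field of isotropy states is compatible with the cocycle along each orbit; this is where one must invoke and adapt the computations in the proofs of Theorems 1.1 and 1.3 of \cite{N}, and in particular verify that the passage between the full and reduced $C^*$-algebras afforded by Lemma~\ref{lambda} is compatible with these identifications. The rest is organising the Lebesgue decomposition of $m$ and the orbit decomposition of its atoms, which the preceding lemmas have already set up.
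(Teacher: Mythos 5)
Your existence direction and your treatment of the converse decomposition follow the paper's own route: Neshveyev's integral decomposition over the unit space, Lemma \ref{decomp!!} to split the associated measure into its non-atomic and purely atomic conformal parts, Lemma \ref{sumconsist} to organise the atoms into consistent $\beta$-summable orbits with $m^a|_{\mathcal O} = m^a(\mathcal O)m_{\mathcal O}$, assumption b) to see that the isotropy states are irrelevant on the non-atomic part, and weak$^*$-closed convexity of the KMS simplex for existence. That part is fine. Your observation that the restriction of $\omega$ to $C_0\left(G^{(0)}\right)$ already determines $t_0$, $m$, the orbits $\mathcal O_i$ and the weights $t_i$ (via the Lebesgue decomposition of the associated measure) is also correct, and is a clean way to get that portion of the uniqueness.

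The genuine gap is in your last uniqueness claim, that ``each $\varphi_i$ is determined by $\omega$ through the values $\omega(h)$ for $h$ supported near $G^{x_i}_{x_i}$.'' The states $\varphi_i$ live on off-diagonal elements, so they are not visible from the associated measure, and the values of $\omega$ at elements supported in any neighbourhood of $G^{x_i}_{x_i}$ do \emph{not} isolate the summand $t_i\omega^{\varphi_i}_{\mathcal O_i}$: every open neighbourhood of $x_i$ in $G^{(0)}$ in general carries mass of the non-atomic measure $m$ and meets the other summable orbits $\mathcal O_j$ (in the intended applications the orbits are dense in $G^{(0)}$), so $\omega(h)$ mixes all the components of (\ref{decmpp}). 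What is needed, and what the paper supplies, is a limiting argument that kills these contaminating contributions: choose bounded real sequences $f^l_k \in C_c\left(G^{(0)}\right)$ converging pointwise to the indicator of a finite subset $\{x_1,\dots,x_l\}$ of $\mathcal O_i$, an approximate unit $\{g_n\} \subseteq C_c\left(G^{(0)}\right)$, and use the Cauchy--Schwarz estimate
\begin{equation*}
\left|\omega^{\varphi_j}_{\mathcal O_j}\left(f^l_k g_n a\right)\right| \leq \left(\omega^{\varphi_j}_{\mathcal O_j}(a^*a)\int_{G^{(0)}} \left(f^l_k\right)^2 g_n^2 \ dm_{\mathcal O_j}\right)^{1/2}
\end{equation*}
(and its analogue for the non-atomic part) to show that the iterated limit $\lim_{n}\lim_{l}\lim_{k}\omega\left(f^l_k g_n a\right)$ equals $t_i\,\omega^{\varphi_i}_{\mathcal O_i}(a)$; only then is $t_i\omega^{\varphi_i}_{\mathcal O_i}$, hence $\varphi_i$, pinned down by $\omega$. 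Without some argument of this kind your uniqueness assertion for the $\varphi_i$ is unsupported, and as literally stated it is false in the dense-orbit situations the theorem is designed for.
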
  
\begin{proof} It follows from Theorem 1.3 of \cite{N} that $a \mapsto
  \int_{G^{(0)}} P(a) \ dm$ is a $\beta$-KMS state. Since the same is true for each $\omega_{\mathcal
  O_i}^{\varphi_i}$ the fact that the $\beta$-KMS states constitute a
weak*-closed convex set implies that $\omega$ is a $\beta$-KMS state. 

Conversely, let $\omega$ be a $\beta$-KMS state. It follows from
Theorem 1.3 of \cite{N} that there is a Borel probability measure $m$
on $G^{(0)}$ and a Borel measurable field of states $\varphi_x$ on $C^*(G^x_x),
\ x \in G^{(0)}$,
such that 
\begin{equation}\label{ii)}
\varphi_{x}(f) = \varphi_{r(\xi)}\left(f^{\xi^{-1}}\right), \ f
\in C_c\left(G^x_x\right),
\end{equation}
for all $\xi \in G_x$ and $m$-almost every $x$, and
\begin{equation}\label{iii)}
\omega(f) = \int_{G^{(0)}} \varphi_x\left(f|_{G^x_x}\right) \ dm(x) 
\end{equation}
for all $f \in C_c(G)$. When $m$ is non-atomic it follows from
Corollary 1.2 of \cite{N} that $\omega(a) = \int_{G^{(0)}} P(a) \ dm$
for all $a$, and we are done. Assume that $m$ is purely atomic. It
follows then from Lemma
\ref{sumconsist} that there is a countable collection $\mathcal O_i, i \in I$, of
consistent and $\beta$-summable $G$-orbits such that $m = \sum_{i
  \in I} m\left( \mathcal O_i\right) m_{\mathcal O_i}$. Choose an
element $x_i \in \mathcal O_i$. It follows from (\ref{ii)}) and
(\ref{iii)})  that $\omega = \sum_{i \in I}
m\left(\mathcal O_i\right) \omega^{\varphi_{x_i}}_{\mathcal
  O_i}$. Finally, when $m$ is neither non-atomic nor purely atomic we
apply Lemma \ref{decomp!!} to get an $s
\in ]0,1[$ and $(G,c)$-conformal Borel probability measures $m^c$ and $m^a$ with
exponent $\beta$ such that $m^c$ is non-atomic, $m^a$ is purely atomic
and $m = sm^c + (1-s)m^a$. Since $m^c$ is non-atomic it follows from
assumption b) that
\begin{equation}\label{c-a}
\omega(f) =  s\int_{G^{(0)}} f(x) \ dm^c(x)  +(1-s)\int_{G^{(0)}}
\varphi_x\left(f|_{G^x_x}\right) \ dm^a(x)  
\end{equation}
for all $f \in C_c(G)$. By repeating the previous argument with $m$
replaced by $m^a$ we get a countable collection $\mathcal O_i, i \in I$, of
consistent and $\beta$-summable $G$-orbits and elements $x_i \in
\mathcal O_i$ such that
$$
\int_{G^{(0)}}
\varphi_x\left(f|_{G^x_x}\right) \ dm^a(x) = \sum_{i \in I}
m^a\left(\mathcal O_i\right) \omega^{\varphi_{x_i}}_{\mathcal
  O_i}(f)
$$ for all $f \in C_c\left(G\right)$. It follows then that
$$
\omega(a) = s \int_{G^{(0)}} P(a) \ dm^c + (1-s)  \sum_{i \in I}
m^a\left(\mathcal O_i\right) \omega^{\varphi_{x_i}}_{\mathcal O_i}(a) 
$$
for all $a \in C^*_r(G)$. This shows that $\omega$ can be decomposed as in (\ref{decmpp}).

To
 see why the decomposition (\ref{decmpp}) is unique, let $\{x_k\}$ be
 a numbering of the elements of $\mathcal O_i$. For each $l \in
 \mathbb N$ let $\{f^l_k\}$ be a real and
bounded
sequence in $C_c\left(G^{(0)}\right)$ converging point wise to the
characteristic function of $\left\{x_1,x_2, x_3,\dots, x_l\right\}$. Let $\{g_n\}$ be an
approximate unit for $C^*_r(G)$ contained in $
C_c\left(G^{(0)}\right)$ and let $a \in C^*_r(G)$. Since the
Cauchy-Schwarz inequality implies that
$$
\omega^{\varphi_j}_{\mathcal O_j}(f^l_kg_na) \leq
\omega^{\varphi_j}_{\mathcal O_j}(a^*a)\int_{G^{(0)}} {f^l_k}^2g_n^2 \ d
m_{\mathcal O_j} , 
$$
we see that $\lim_{k \to \infty} \omega^{\varphi_j}_{\mathcal
  O_j}(f^l_kg_na) = 0$ for all $l$ when $j \neq i$. It follows in the
same way that $\lim_{k \to \infty} \int_{G^{(0)}} P(f^l_kg_na)
\ dm = 0$ for all $l$, and the estimate
$$
\left| \omega^{\varphi_i}_{\mathcal O_i}(g_na)
  -\omega^{\varphi_i}_{\mathcal O_i}(f^l_kg_na)\right| \leq
\omega^{\varphi_i}_{\mathcal O_i}(a^*a)\int_{G^{(0)}} {\left(1- f^l_k\right)}^2g_n^2 \ d
m_{\mathcal O_i}
$$
shows that $\lim_{l \to \infty} \lim_{k \to \infty} \omega^{\varphi_i}_{\mathcal
  O_i}(f^l_kg_na) =  \omega^{\varphi_i}_{\mathcal
  O_i}(g_na)$. Thus
$$
\lim_{n \to \infty} \lim_{l \to \infty} \lim_{k
  \to \infty} \omega\left(f^l_kg_na\right) = \lim_{n \to \infty} t_i \omega^{\varphi_i}_{\mathcal
  O_i}(g_na) =t_i \omega^{\varphi_i}_{\mathcal
  O_i}(a).
$$ 
This shows that $t_i \omega^{\varphi_i}_{\mathcal
  O_i}$ is determined by $\omega$, and hence so is $t_i$ and $ \omega^{\varphi_i}_{\mathcal
  O_i}$. The same is then automatically true for $t_0$ and $m$.
\end{proof}

The uniqueness part of the statement makes it easy to identify the
extremal $\beta$-KMS states. In particular it follows that they are either non-atomic or purely atomic.

\section{An  amended transformation groupoid}\label{sec1}

Let $X$ be a locally compact Hausdorff space and $\psi : X \to X$ a map. Let $\mathcal
P$ be a pseudo-group on $X$. More specifically, $\mathcal P$ is a
collection of local homeomorphisms $\eta : U \to V$ between open
subsets of $X$ such that
\begin{enumerate}
\item[i)] for every open subset $U$ of $X$ the identity map $\id : U
  \to U$ is in $\mathcal P$,
\item[ii)] when $\eta : U \to V$ is in $\mathcal P$ then so is
  $\eta^{-1} : V \to U$, and
\item[iii)] when $\eta : U \to V$ and $\eta_1 : U_1 \to V_1$ are
  elements in $\mathcal P$ then so is $\eta_1 \circ \eta : U \cap
  \eta^{-1}(V\cap U_1) \to \eta_1(V \cap U_1)$.  
\end{enumerate}  
For each $k \in \mathbb Z$ we denote by
$\mathcal T_k(\psi)$ the elements $\eta : U \to V$ of $\mathcal P$ with the
property that there are natural numbers $n,m$ such that $k = n-m$ and
\begin{equation}\label{crux0} 
\psi^n(z) = \psi^m(\eta(z))  \ \ \forall z \in U.
\end{equation}
The elements of $\mathcal T =\bigcup_{k \in \mathbb Z} \mathcal T_k(\psi)$ will
be called \emph{local transfers} for $\psi$. 
We denote by $[\eta]_x$ the germ at a point $x \in X$ of an element $\eta \in \mathcal
T_k(\psi)$. Set
$$
\mathcal G_{\psi} = \left\{ (x,k,\eta,y) \in X \times \mathbb Z \times
  \mathcal P  \times X : \ \eta \in \mathcal T_k(\psi) , \ \eta(x)
  = y \right\} .
$$
We define an equivalence relation $\sim$ in $\mathcal G_{\psi}$ such
that $(x,k,\eta,y) \sim (x',k',\eta',y')$ when
\begin{enumerate}
\item[i)] $ x = x', \ y = y', \ k = k'$ and
\item[ii)] $[\eta]_x = [\eta']_x$.
\end{enumerate}
Let
$\left[x,k,\eta,y\right]$ denote the equivalence class represented
by $(x,k,\eta,y)  \in \mathcal G_{\psi}$. The quotient space 
$G_{\psi} = \mathcal G_{\psi}/{\sim}$ is a groupoid such that two elements
$\left[x,k,\eta,y\right]$ and $\left[x',k',\eta',y'\right]$ are
composable when $y= x'$ and their product is
$$
\left[x,k,\eta,y\right]\left[y,k',\eta',y'\right] =
\left[x,k+k',\eta'\circ \eta ,y'\right] .
$$ 
The inversion in $G_{\psi}$ is defined such that
$\left[x,k,\eta,y\right]^{-1} = \left[ y,-k,\eta^{-1},x \right]$. The unit space of $G_{\psi}$ can be identified with $X$ via the map $x \mapsto [x,0,\id,x]$, where $\id$ is the
identity map on $X$. When $\eta \in \mathcal T_k(\psi)$ and $U$ is an open subset of the domain
of $\eta$ we set
\begin{equation}\label{baseset}
U(\eta) = \left\{ \left[z,k,\eta,\eta(z)\right]  : \ z \in U \right\}.
\end{equation}
It is straightforward to verify that by varying $k$, $\eta$ and $U$ the sets (\ref{baseset}) constitute a base
for a topology on $G_{\psi}$. In general this topology is not
Haussdorff and to amend this we now make the following additional assumption.
\begin{assumption}\label{crux2}
When $x \in X$ and $\eta(x) = \xi(x)$ for some $\eta,
\xi \in \mathcal T_k(\psi)$, then the implication
\begin{equation*}\label{crux1}
\text{$x$
  is not isolated in} \ \left\{ y \in X : \ \eta(y) = \xi(y)\right\} \ \
\Rightarrow \ \ [\eta]_x =[\xi]_x
\end{equation*}
holds.
\end{assumption} 
Then $G_{\psi}$ is Hausdorff: Let
  $\left[x,k,\eta,y\right]$ and $\left[x',k',\eta',y'\right]$ be
  different elements of $G_{\psi}$. There are then open neighbourhood's $U$ of $x$ and $U'$ of $x'$ such that 
$U(\eta) =\left\{
    \left[z,k,\eta,\eta(z)\right] : \ z \in U\right\}$ and $U'(\eta') = \left\{
    \left[z,k',\eta',\eta'(z)\right] : \ z \in U'\right\}$ are
  disjoint. This is trivial when $(x,k,y) \neq (x',k',y')$ while it is
  a straightforward consequence of Assumption \ref{crux2} when
  $(x,k,y) = (x',k',y')$. 

Since the range and source maps are homeomorphisms from $U(\eta)$ onto
$U$ and $\eta(U)$, respectively, it follows that $G_{\psi}$ is a
locally compact Hausdorff space because $X$ is. It is also straightforward to show
that the groupoid operations are continuous so that we can
conclude the following. 

\begin{thm}\label{etale!} Let Assumption \ref{crux2} be satisfied. Then
  $G_{\psi}$ is an \'etale locally compact Hausdorff groupoid.  
\end{thm}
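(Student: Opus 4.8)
The plan is to verify the three defining features of an étale groupoid in turn: that $G_\psi$ is a locally compact Hausdorff topological space, that the groupoid operations (composition and inversion) are continuous, and that the range map $r$ (equivalently the source map $s$) is a local homeomorphism. The Hausdorff property and local compactness have essentially been established in the discussion preceding the statement, so I would mainly record them: Assumption \ref{crux2} gives that two distinct points of $G_\psi$ sitting over the same triple $(x,k,y)$ can be separated by basic sets $U(\eta)$, $U'(\eta')$, since if every neighbourhood of $x$ on which $\eta$ and $\xi$ are defined contained a point $y\neq x$ with $\eta(y)=\xi(y)$ then $x$ would fail to be isolated in $\{y:\eta(y)=\xi(y)\}$, forcing $[\eta]_x=[\xi]_x$, contrary to the elements being distinct; and if $(x,k,y)\neq(x',k',y')$ the separation is immediate because $r$, $s$ and the cocycle value $k$ are continuous (indeed locally constant in $k$) on basic sets. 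Then local compactness follows because $r$ restricts to a homeomorphism of each basic set $U(\eta)$ onto the open set $U\subseteq X$, and $X$ is locally compact Hausdorff, so every point of $G_\psi$ has a basis of compact neighbourhoods and the space is locally compact Hausdorff.

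Next I would check that $r\colon G_\psi\to G_\psi^{(0)}=X$ is a local homeomorphism, which is the defining étale condition. This is where the sets $U(\eta)$ do the work: by construction $r$ maps $U(\eta)$ bijectively onto $U$, it is continuous (as the groupoid operations are), and its inverse $z\mapsto[z,k,\eta,\eta(z)]$ is continuous because it is, up to the identification $X\cong G_\psi^{(0)}$, a section into a basic open set. Since the sets $U(\eta)$ form an open cover of $G_\psi$, this shows $r$ is étale; the same argument applied to $s$ (or the observation that inversion is a homeomorphism swapping $r$ and $s$) gives the source map as well. One should also note in passing that $G_\psi^{(0)}$, identified with $X$ via $x\mapsto[x,0,\id,x]$, is open in $G_\psi$ — it is the basic set $X(\id)$ — which is part of being étale in Renault's sense.

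The remaining, and most delicate, point is the continuity of the groupoid operations, and this is the step I expect to be the main obstacle. Inversion is easy: it sends the basic set $U(\eta)$ homeomorphically onto $\eta(U)(\eta^{-1})$, since $\eta^{-1}\in\mathcal T_{-k}(\psi)$ and $[\eta]_z=[\xi]_z\iff[\eta^{-1}]_{\eta(z)}=[\xi^{-1}]_{\eta(z)}$. For composition, given composable elements $[x,k,\eta,y]$ and $[y,k',\eta',y']$ and a basic neighbourhood $U''(\eta'\circ\eta)$ of the product $[x,k+k',\eta'\circ\eta,y']$, I would produce basic neighbourhoods $U(\eta)$ of the first factor and $V(\eta')$ of the second — shrinking $U$ so that $\eta(U)\subseteq V\cap\dom(\eta')$ and $U\subseteq U''$ — and then check that the composition map carries the corresponding open set of composable pairs into $U''(\eta'\circ\eta)$. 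The one subtlety is germ bookkeeping: if $(z,k,\eta,\eta(z))\sim(z,k,\tilde\eta,\tilde\eta(z))$ and $(\eta(z),k',\eta',\cdot)\sim(\eta(z),k',\tilde\eta',\cdot)$ near the relevant points, one must confirm $[\eta'\circ\eta]_z=[\tilde\eta'\circ\tilde\eta]_z$, which is immediate from the definition of germs and the functoriality of composition of local homeomorphisms. Once continuity of composition and inversion is in hand, all the pieces assemble to the conclusion that $G_\psi$ is an étale locally compact Hausdorff groupoid.
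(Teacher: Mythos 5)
Your proposal is correct and follows essentially the same route as the paper, whose proof is the discussion preceding the statement: Hausdorffness via Assumption \ref{crux2} and disjoint basic sets $U(\eta)$, local compactness from the fact that $r$ and $s$ are homeomorphisms of each basic set onto open subsets of $X$, and a straightforward verification that the groupoid operations are continuous. Your extra care with the germ bookkeeping in the separation argument and in the continuity of composition matches the details the paper leaves as routine.
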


Assumption \ref{crux2} may be satisfied because of properties of the map
$\psi$, regardless of which pseudo-group $\mathcal P$ is used; it holds for example trivially when
$\psi$ is locally injective on $X$. When
$\psi$ is a local homeomorphism and the pseudo-group $\mathcal P$
consists of all local homeomorphisms on $X$, the groupoid $G_{\psi}$ is the
same as the one considered in increasing generality by Renault, Deaconu
and Anantharaman-Delaroche, \cite{Re},\cite{De},\cite{An}. It is
therefore also a generalisation of the classical crossed product
construction for homeomorphisms.

\section{\'Etale groupoids from holomorphic maps}\label{sec2}

Let $S$ be a connected Riemann surface and $H : S \to S$ a non-constant holomorphic map. Let
$X \subseteq S$ be a subset which is locally compact in the topology
inherited from $S$. Assume that no points are isolated in $X$ and that $X$
is totally $H$-invariant, i.e. that $H^{-1}(X) =X$, and let $H|_X : X
\to X$ denote the restriction of $H$ to $X$. Let
$\mathcal P$ be the pseudo-group on $X$ of local homeomorphisms $\xi : U
\to V$ between open subsets of $X$ with the property that there are
open subsets $U_1,V_1$ in $S$ and a bi-holomorphic map $\xi_1 : U_1
\to V_1$ such that $U_1 \cap X = U, \ V_1 \cap X = V$ and $\xi =
\xi_1$ on $U$. 
Then Assumption
\ref{crux2} is satisfied. This follows from the well-known fact that holomorphic
  maps defined on the same open connected subset of the complex plane
  must be identical if they agree on a set with a limit point in their
  domain. 
Therefore Theorem \ref{etale!} implies that $G_{H|_X}$ is an
  \'etale locally compact Hausdorff groupoid. To simplify notation we
  denote it by $G_X$. 

\subsection{$G_X$ is second countable}

To show that there is a countable base for the topology of $G_X$ we
need some preparations that are also going to be instrumental in
determining the isotropy groups.

Let $x \in S$. A \emph{conformal germ} at $x$ is a
holomorphic and injective map $\eta : U \to S$ where $U$ is an open neighbourhood of
$x$. As is well-known a conformal germ is invertible in the sense that
$\eta(U)$ is open and $\eta^{-1} : \eta(U) \to U$ is holomorphic. The
set of conformal germs at $x$ will be denoted by $\mathcal
H_x$. Two conformal germs $\eta, \eta' \in \mathcal H_x$ are
\emph{equivalent} when they agree in an open neighbourhood of $x$. We
occasionally identify $\mathcal H_x$ with $\mathcal H_x/{\sim}$;
hopefully it will be clear from the context when this happens. Let $\Delta = \left\{ z \in \mathbb C: \ |z| < 1
  \right\}$. For every $x \in S$ there are neighbourhoods $U$
and $V$ of $x$ and $H(x)$, respectively, and homeomorphisms $\varphi :
U \to \Delta, \psi : V \to \Delta$ such that $H(U) \subseteq V$,
$\varphi(x) = \psi(H(x)) = 0$, and
there is an $n \in \mathbb N$ such that
\begin{equation*}
\begin{xymatrix}{
U \ar[rr]^H \ar[d]_-{\varphi} & &V \ar[d]^-{\psi} \\
\Delta \ar[rr]_-{z \mapsto z^n} &  & \Delta }
\end{xymatrix}
\end{equation*}
commutes. This it is trivial when $H'(x) \neq 0$ and it follows from
B\"ottchers theorem, Theorem 9.1 in \cite{Mi}, when $H'(x) = 0$. The number $n$ is called \emph{the
  valency} of $H$ at $x$ and we denote it by $\val(H,x)$. The points in
$$
\mathcal C = \left\{x \in S : \ \val(H,x) \geq 2 \right\} = \left\{x \in S : H'(x)
  = 0\right\}
$$
are \emph{the critical points} of $H$.

\begin{lemma}\label{first} Let $H : S \to S$, $K : S \to S$ be
  non-constant holomorphic maps. Let $x,y \in S$ be points such that $H(x)
  = K(y)$. It follows that
there is a conformal germ $\eta \in \mathcal H_x$ such that 
\begin{itemize}
\item[a)]  $\eta(x) = y$ and
\item[b)] $K(\eta(z)) = H(z)$ for all $z$ in a neighbourhood of $x$
\end{itemize}
if and only if $\val(H,x) = \val(K,y)$, in which case there are
exactly $\val(H,x)$ elements of $\mathcal H_x$ with these properties,
up to equivalence of germs. Furthermore, two elements in $\mathcal
H_x$ which both satisfy a) and b) are equivalent if and only if they
have the same derivative at $x$.
 \end{lemma}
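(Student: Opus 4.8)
The plan is to reduce everything to the local normal form for $H$ and $K$ provided just above the statement, and then to count solutions of $z \mapsto z^n$ on the disc. First I would choose, using the Böttcher/normal-form diagram, local charts $\varphi : U \to \Delta$ around $x$ with $\varphi(x)=0$ and $\theta : W \to \Delta$ around $H(x)=K(y)$ with $\theta(H(x))=0$, in which $H$ becomes $z \mapsto z^{p}$ with $p=\val(H,x)$; and similarly charts $\varphi' : U' \to \Delta$ around $y$ with $\varphi'(y)=0$ in which $K$ becomes $w \mapsto w^{q}$ with $q=\val(K,y)$, arranging (by shrinking) that the target chart of $K$ near $K(y)$ is the \emph{same} map $\theta$ (this is where one uses that $H(x)=K(y)$). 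In these coordinates, condition b) says that a germ $\eta$ with $\eta(x)=y$ corresponds to a holomorphic germ $f$ at $0$ with $f(0)=0$ satisfying $f(z)^{q}=z^{p}$ near $0$.

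Next I would analyze the equation $f(z)^{q}=z^{p}$ for a germ $f$ fixing $0$. Writing $f(z)=z\,g(z)$ is not quite right unless $q \mid p$; more precisely, taking orders of vanishing at $0$ on both sides gives $q\cdot\mathrm{ord}_0(f)=p$, so a solution exists only if $q \mid p$. But $f$ must also be a \emph{conformal} germ, i.e. injective, hence $\mathrm{ord}_0(f)=1$, forcing $p=q$, i.e. $\val(H,x)=\val(K,y)$. Conversely, if $p=q$, then $f(z)^{p}=z^{p}$ means $(f(z)/z)^{p}=1$ as holomorphic germs (after dividing by $z^p$, legitimate since $f(z)/z$ is a unit), so $f(z)/z$ is a constant $p$-th root of unity; thus $f(z)=\zeta z$ with $\zeta^{p}=1$, giving exactly $p=\val(H,x)$ solutions, each manifestly injective, hence a genuine conformal germ. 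Translating back through the charts produces exactly $\val(H,x)$ inequivalent germs $\eta \in \mathcal H_x$ satisfying a) and b).

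For the last sentence, two such germs $\eta_1,\eta_2$ correspond to $f_i(z)=\zeta_i z$, and $\eta_1 \sim \eta_2$ iff $f_1=f_2$ iff $\zeta_1=\zeta_2$ iff $f_1'(0)=f_2'(0)$; since the chart maps $\varphi,\varphi'$ are biholomorphic, $f_i'(0)$ differs from $\eta_i'(x)$ only by the nonzero factor $\varphi'(y)'\cdot(\varphi^{-1})'(0)$ coming from the change of coordinates (the same factor for both $i$), so $f_1'(0)=f_2'(0) \iff \eta_1'(x)=\eta_2'(x)$. Hence $\eta_1$ and $\eta_2$ are equivalent precisely when they have the same derivative at $x$.

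The main obstacle is the bookkeeping in the first paragraph: making sure the target charts for $H$ near $x$ and for $K$ near $y$ can be taken literally identical (not merely conjugate), and keeping track of how the derivative $\eta'(x)$ transforms under the chosen charts so that the clean statement "$f_i$ injective $\Rightarrow \mathrm{ord}_0 f_i=1$" and the derivative comparison both go through without hidden normalizations. Once the coordinates are fixed, the algebra $f(z)^p=z^p \Rightarrow f(z)=\zeta z$ is routine.
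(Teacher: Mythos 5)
Your proposal is correct and follows essentially the same route as the paper: reduce to local coordinates at $x$, $y$ and the common image, normalize to power maps (the root-of-a-unit extraction you defer to ``bookkeeping'' is exactly the holomorphic-logarithm step the paper performs), and observe that the solutions of the resulting equation differ by multiplication with $\val(H,x)$-th roots of unity, which also yields the derivative criterion. The only cosmetic difference is that you normalize both $H$ and $K$ against a common target chart so the equation collapses to $f(z)^p=z^p$, whereas the paper keeps B\"ottcher-type conjugating germs $\delta,\mu$ and solves $\kappa(z)^j=\delta^{-1}\circ\mu\left(z^j\right)$ directly.
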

\begin{proof} Assume first that there is a conformal germ $\eta \in
  \mathcal H_x$ such that a) and b) hold. Then
$$
\val (H,x) = \val (K \circ \eta, x) = \val (K,y)\val(\eta,x) = \val (K,y),
$$
proving the necessity of the condition. Assume next that $\val(H,x) =
\val(K,y) = j$. When $j =1$, $H \in \mathcal H_x$
and $K \in \mathcal H_y$ and hence $\eta$ is the only
element of $\mathcal H_x$, up to equivalence, which satisfies a) and b). Assume $j \geq
2$. Working locally using local charts at $x$, $y$ and $K(y) = H(x)$ we may assume that $S = \Delta$ and that $x=y = 0$. It follows from B\"ottchers theorem, Theorem
9.1 in \cite{Mi}, that there are
conformal germs $\delta \in \mathcal H_0$ and $\mu \in \mathcal H_0$
such that $\delta(0) = \mu(0) = 0$ and
$$
K(\delta(z)) - K(0) = \delta\left(z^j\right)
$$
and
$$
H(\mu(z)) - H(0) = \mu\left(z^j\right)
$$
for all $z$ in a neighbourhood of $0$. Then $\eta \mapsto \delta^{-1} \circ \eta
\circ \mu$ is a bijection from the set of elements $\eta \in
\mathcal H_0$ which satisfy a) and b) onto the set of elements
$\kappa \in \mathcal H_0$ such that
\begin{itemize}
\item[a')] $\kappa(0) = 0$ and
\item[b')] $\kappa(z)^j = \delta^{-1} \circ \mu(z^j)$ for all $z$ in a neighbourhood of $0$.
\end{itemize}
The Taylor expansion of $z \mapsto \delta^{-1} \circ
\mu \left(z^j\right)$ at $0$ has the form
$z^j \sum_{n=0}^{\infty} b_n z^n$
where $b_0 = \left(\delta^{-1} \circ \mu\right)'(0) \neq 0$. By using a
holomorphic logarithm near $b_0$ we get a holomorphic function $\kappa_0$ such that $\kappa_0(z)^j =
\sum_{n=0}^{\infty} b_n z^n$ for all $z$ in a neighbourhood of
$0$. Set $\kappa(z) = z\kappa_0(z)$. Then $\kappa \in \mathcal H_0$
since $\kappa'(0) = \kappa_0(0) \neq 0$, and we have proved
existence. Since elements $\kappa$ of $\mathcal H_0$ that satisfy a') and b')
agree up to multiplication by a $j$th root of unity, we see that
their number is exactly $j$, and that their equivalence class in
$\mathcal H_0$ is
determined by their derivative at $0$. 
\end{proof}

By specialising to the case where $K = H$ and $x=y$ the preceding
proof also shows the following. 

\begin{lemma}\label{cyclicgroup} Let $H : S \to S$ be a non-constant
  holomorphic map and let $x\in S$. It follows that
the equivalence classes of conformal germs $\eta \in \mathcal H_x$ which satisfy that
\begin{enumerate}
\item[i)] $\eta(x)= x$ and
\item[ii)] $H(\eta(z)) = H(z)$ for all $z$ in a neighbourhood of $x$ in $S$ 
\end{enumerate}
form a cyclic group of order $\val(H,x)$ under composition.
\end{lemma}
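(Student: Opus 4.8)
The plan is to verify directly that the set of classes in question, call it $\Gamma_x$, is a group under composition of germs, to read off its order from \reflemma{first}, and then to transport $\Gamma_x$ to the standard local model to see that it is cyclic.

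First I would note that $\Gamma_x$ is a group. The class of the identity germ $\id$ clearly satisfies i) and ii). If $\eta_1,\eta_2$ are conformal germs at $x$ satisfying i) and ii), then $\eta_1\circ\eta_2$ is again a conformal germ at $x$ (a composite of injective holomorphic maps), it fixes $x$, and $H(\eta_1(\eta_2(z))) = H(\eta_2(z)) = H(z)$ for $z$ near $x$; and $\eta^{-1}$ fixes $x$ while substituting $w = \eta^{-1}(z)$ into the identity $H(\eta(w)) = H(w)$ gives $H(z) = H(\eta^{-1}(z))$ near $x$. Since composition and inversion of germs respect the equivalence relation (agreement on a neighbourhood of $x$), $\Gamma_x$ is a group. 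By \reflemma{first} applied with $K = H$ and $y = x$, it has exactly $n := \val(H,x)$ elements, and each class is determined by the derivative of a representative at $x$.

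It remains to show $\Gamma_x$ is cyclic, and for this I would simply rerun the reduction in the proof of \reflemma{first} with $K = H$ and $x = y = 0$ after passing to local charts. Since $K = H$ we may take $\mu = \delta$ there, so $\delta^{-1}\circ\mu = \id$, and condition b') of that proof becomes $\kappa(z)^n = z^n$, i.e.\ $\kappa(z) = \zeta z$ with $\zeta^n = 1$. Thus the conjugation map $\eta \mapsto \delta^{-1}\circ\eta\circ\delta$ carries $\Gamma_0$ bijectively onto $\{\,z\mapsto \zeta z : \zeta^n = 1\,\}$. The only point to add beyond \reflemma{first} is that this conjugation is a group homomorphism, which is immediate from $(\delta^{-1}\circ\eta_1\circ\delta)\circ(\delta^{-1}\circ\eta_2\circ\delta) = \delta^{-1}\circ(\eta_1\circ\eta_2)\circ\delta$, and it evidently respects equivalence of germs. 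Hence $\Gamma_x \cong \{\,\zeta \in \mathbb C : \zeta^n = 1\,\} \cong \mathbb Z/n\mathbb Z$, a cyclic group of order $n = \val(H,x)$; the case $n = 1$ is trivial.

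I do not anticipate a genuine obstacle here: essentially all the analytic content was already extracted in the proof of \reflemma{first}, and what remains is the bookkeeping that the bijection produced there is in fact a group isomorphism onto the $n$-th roots of unity.
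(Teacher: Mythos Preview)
Your argument is correct and is precisely the specialisation the paper has in mind: set $K=H$, $y=x$, and $\mu=\delta$ in the proof of \reflemma{first}, so that b') becomes $\kappa(z)^n=z^n$ and the bijection $\eta\mapsto\delta^{-1}\circ\eta\circ\delta$ is a conjugation, hence a group isomorphism onto the $n$-th roots of unity. The only additions you make over the paper's one-line reference are the explicit checks that $\Gamma_x$ is a group and that the bijection respects composition, both of which are routine.
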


\begin{prop}\label{etale!!}$G_X$ has a countable base for its
  topology, i.e. $G_X$ is a second countable \'etale locally compact Hausdorff groupoid. 
\end{prop}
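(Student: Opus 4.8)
The plan is to show that the base of sets $U(\eta)$ described in \eqref{baseset} can be thinned out to a countable sub-base. Since $X$ is locally compact, Hausdorff and second countable (being a locally compact subset of the Riemann surface $S$, which is second countable), fix a countable base $\mathcal U$ for the topology of $X$. The issue is that the index set $\mathcal T = \bigcup_{k} \mathcal T_k(H|_X)$ of local transfers is a priori uncountable: for a fixed $k = n-m$ there may be a continuum of local transfers $\eta$ satisfying $\psi^n(z) = \psi^m(\eta(z))$, distinguished by their germs at various points. So the heart of the matter is a local finiteness statement: near any point there are only finitely many germs of local transfers realising a given pair $(n,m)$.

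First I would record the key finiteness fact, which is essentially Lemma \ref{first} applied with $H$ there equal to $\psi^n$ and $K$ there equal to $\psi^m$ (both non-constant holomorphic maps on $S$). Given $x \in X$ and a local transfer $\eta \in \mathcal T_{n-m}(\psi)$ with $\eta(x) = y$, condition \eqref{crux0} says precisely $\psi^n = \psi^m \circ \eta$ near $x$, so by Lemma \ref{first} one must have $\val(\psi^n,x) = \val(\psi^m,y)$, and — crucially — the germ $[\eta]_x$ is one of at most $\val(\psi^n,x)$ possibilities, determined by its derivative at $x$. In particular, for fixed $n,m$ and fixed $x$, the set of germs at $x$ of elements of $\mathcal T_{n-m}(\psi)$ is finite (possibly empty). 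Moreover $y = \eta(x)$ ranges over the finite set $(\psi^m)^{-1}(\psi^n(x))$, since $\psi^m(y) = \psi^n(x)$.

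Next I would use this to produce the countable base. For each pair $(n,m) \in \mathbb N^2$, each basic open set $U \in \mathcal U$, and each $\eta \in \mathcal T_{n-m}(\psi)$ whose domain meets $U$, the set $(U \cap \operatorname{dom}\eta)(\eta)$ is open in $G_X$; these sets (over all $k,\eta,U$) already form a base by the remark following \eqref{baseset}. The claim is that, among the $\eta$'s, it suffices to keep countably many. Fix $(n,m)$; I would show that $\mathcal T_{n-m}(\psi)$, viewed up to germ-equivalence \emph{everywhere}, is covered by countably many ``maximal'' local transfers. Concretely: cover $X$ by countably many sets from $\mathcal U$; on each such $U$, and for each of the finitely-many branch-choices at a point, analytic continuation of the relation $\psi^n = \psi^m \circ \eta$ extends $\eta$ to a well-defined germ along any path in the (at most countable) set where the continuation stays single-valued, and by Assumption \ref{crux2}/the identity theorem two local transfers with the same germ at one point of a connected domain agree wherever both are defined. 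Hence each local transfer is the restriction of one coming from a countable list: for each $(n,m)$, each $U \in \mathcal U$, and each of the finitely many admissible derivative-values at a chosen point of $U$, we obtain one representative local transfer $\eta$, and the sets $(V \cap \operatorname{dom}\eta)(\eta)$ with $V \in \mathcal U$ exhaust all basic sets $U'(\eta')$ with $\eta'$ having that germ. Taking the union over the countably many $(n,m,U,\text{derivative})$ and over $V \in \mathcal U$ yields a countable base.

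The main obstacle I anticipate is the bookkeeping in the last step: making precise that ``every local transfer is, up to restriction, one from a countable list.'' The finiteness of germs at a single point (Lemma \ref{first}) is clean, but one must globalise it — a local transfer may be defined on a disconnected open set, and different components could carry a priori unrelated branches. The fix is to work component-by-component: on a connected domain, Assumption \ref{crux2} (equivalently the identity theorem for the holomorphic maps $\xi_1$ defining $\mathcal P$) forces a local transfer to be determined by its germ at any one point, and there are only finitely many candidate germs over each point of a countable dense-enough set, so only countably many connected local transfers arise for each $(n,m)$. Once this is in hand, the passage to a countable base is routine, and with Theorem \ref{etale!} already giving that $G_X$ is \'etale, locally compact and Hausdorff, the proposition follows.
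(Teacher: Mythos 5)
Your plan is essentially the paper's proof: both rest on Lemma \ref{first} (only finitely many germs at a given point intertwining $H^n$ and $H^m$, classified by their derivative), on second countability of $S$, and on the identity theorem to propagate a germ over a connected set, and the paper carries out exactly the globalisation you sketch by taking a countable base $\{U_i\}$ of \emph{connected open subsets of $S$} (not of $X$), each containing at most one critical point of $H^n$ or $H^m$, so that the collection $A(n,m,i,j)$ of conformal germs defined on all of $U_i$ and intertwining $H^n,H^m$ into $U_j$ is finite and every basic set $U(\eta)$ is a union of the corresponding sets $\left\{ [x,k,\mu|_X,\mu(x)] : x \in U_i \cap X \right\}$. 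The two small slips to repair are that in this generality a fibre $\left(H^m\right)^{-1}\left(H^n(x)\right)$ is only discrete, hence countable rather than finite (harmless, since countability of the germs at each point is all you need), and that the connectedness/identity-theorem step must be anchored in $S$ via the biholomorphic extensions, choosing the base set $U_i$ inside the domain of the given transfer's extension, because open subsets of $X$ (for instance of a totally disconnected Julia set) carry no useful connected components, so your bookkeeping of ``one representative per germ at a chosen point of $U\in\mathcal U$'' only becomes sound after being reformulated in this way.
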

\begin{proof}
To construct a countable base for the topology of $G_X$, fix $n,m \in
\mathbb N$. Since $H$ is not constant the set of points that are
critical for either $H^n$ or $H^m$ is a discrete subset of
$S$. Furthermore, $S$ is second countable by \S 2 in
\cite{Mi}. There is therefore a base
$\{U_i\}_{i=1}^{\infty}$ for the topology of $S$ such that each
$U_i$ is connected and contains at most one critical point of $H^n$ or
$H^m$. Furthermore, we can arrange that $H^m$ is injective on $U_j$
unless there is critical point of $H^m$ in $U_j$. Let $i,j \in \mathbb N$. When $U_i$
contains no critical points of $H^n$ and $U_j$ no critical point of
$H^m$ it follows that there is at most
one conformal germ $\mu$ with $U_i$ as domain of
definition and satisfying that 
\begin{equation}\label{germeq}
  \mu(U_i)
\subseteq U_j \ \text{and} \    H^n(z) = H^m(\mu(z)) \ \forall z \in U_i.
\end{equation} 
When $U_i$ contains a critical point $x_i$
of $H^n$ it follows from Lemma \ref{first} that for each $j \in
\mathbb N$ the number of
conformal germs $\eta$ with $U_i$ as domain such that (\ref{germeq}) holds is at most $\val \left(H^n,x_i\right)$. Hence
the collection $A(n,m,i,j)$ of conformal germs $\mu$ with the properties
that $\mu$ is defined on $U_i$
such that (\ref{germeq}) holds, and $U_j$ does not
contain a critical point of $H^m$ when $U_i$ does not contain one of
$H^n$, is finite. By definition of the
pseudo-group $\mathcal P$, and our assumptions on the subset $X$, it follows that for any $\eta \in \mathcal T_{k}\left(H|_X\right)$
the set $U(\eta)$ from (\ref{baseset}) is a union of sets of the form 
$$
\left\{\left[x,k,\mu|_X, \mu(x)\right] :  x \in U_i \cap X \right\}
$$ 
for some $(n,m,i,j)$ and some $\mu \in A(n,m,i,j)$. Such sets therefore form a countable base
for the topology of $G_X$.
\end{proof}

\subsection{The isotropy groups}

We say that a point $x \in X$ is \emph{pre-periodic} when there is a
$p \in \mathbb N \backslash \{0\}$ such that $H^{p+n}(x) = H^n(x)$ for
all large $n$ and that $x$ is \emph{pre-critical} when there is an $n
\in \mathbb N \cup \{0\}$ such that $H^n(x)$ is critical for $H$.

\begin{prop}\label{isotropy} Let $x \in X$ and let
  $\left(G_X\right)^x_x$ be the isotropy group at $x$ in $G_X$.

\begin{itemize}
\item[a)] When $x$ is neither pre-critical nor pre-periodic,
  $\left(G_X\right)_x^x \simeq 0$.
\item[b)] When $x$ is pre-periodic and not pre-critical,
  $\left(G_X\right)_x^x \simeq \mathbb Z$.
\item[c)] When $x$ is pre-critical and not pre-periodic
  $\left(G_X\right)_x^x$ is isomorphic to a non-zero subgroup of $\mathbb
  Q/\mathbb Z$.
\item[d)] Assume that $x$ is both pre-periodic and pre-critical. Let $n \in
  \mathbb N$ be the least number such that $H^n(x)$ is periodic.

\begin{itemize}
\item[d1)] When $H^j(x) \in \mathcal C$ for some $j \geq n$ the isotropy group
  $\left(G_X\right)_x^x$ is isomorphic to an infinite subgroup of $\mathbb
  Q/\mathbb Z$, and 
\item[d2)] when $H^j(x) \notin \mathcal C$ for all $j \geq n$ the isotropy group
  $\left(G_X\right)_x^x$ is isomorphic to $\mathbb Z \oplus
  \mathbb Z_d$ where $d = \val(H^n,x)$.
\end{itemize}
\end{itemize}
\end{prop}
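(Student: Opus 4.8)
The plan is to describe an element of $\left(G_X\right)^x_x$ concretely and then read off the group structure in each case. An element of $\left(G_X\right)^x_x$ is a germ $[\eta]_x$ with $\eta \in \mathcal T_k(H|_X)$, $\eta(x) = x$; by the definition of the pseudo-group $\mathcal P$ and analytic continuation, such a germ is the same thing as a conformal germ $\eta \in \mathcal H_x$ with $\eta(x) = x$ and $H^n(z) = H^m(\eta(z))$ near $x$, for some $n,m \in \mathbb N$ with $k = n - m$; two such pairs $(\eta,k)$ and $(\eta',k')$ give the same groupoid element exactly when $k = k'$ and $[\eta]_x = [\eta']_x$. By Lemma~\ref{first} (with $K = H^m$, $H$ replaced by $H^n$), the germ $[\eta]_x$ is uniquely pinned down by $\eta'(x)$, and it exists iff $\val(H^n,x) = \val(H^m,x)$. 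So the first step is to compute $\val(H^n,x) = \prod_{j=0}^{n-1}\val\bigl(H,H^j(x)\bigr)$, and to characterise, for a given $k$, which pairs $(n,m)$ with $n - m = k$ and $n,m$ large admit a germ — these are the pairs with $\prod_{j=m}^{n-1}\val(H,H^j(x)) = 1$ when $k=n-m\ge 0$, i.e. $H^j(x) \notin \mathcal C$ for $m \le j < n$, together with the orbit of $x$ under $H$ eventually coinciding. The combinatorics of the forward orbit $x, H(x), H^2(x), \dots$ — whether it hits $\mathcal C$, whether it is eventually periodic — is exactly what produces the four cases.

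Next I would organise the computation around two homomorphisms out of $\left(G_X\right)^x_x$. The first is $[\eta]_x \mapsto k$, which lands in $\mathbb Z$; its kernel consists of germs coming from pairs $(n,n)$, i.e. conformal germs $\eta$ with $\eta(x) = x$ and $H^n(\eta(z)) = H^n(z)$ near $x$ for some $n$, and by Lemma~\ref{cyclicgroup} applied to $H^n$ these form the cyclic group of order $\val(H^n,x)$; taking $n \to \infty$ this kernel is the increasing union of cyclic groups $\mathbb Z_{\val(H^n,x)}$, hence isomorphic to a subgroup of $\mathbb Q/\mathbb Z$ (it is the subgroup generated by the $1/\val(H^n,x)$). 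This kernel is non-zero iff some $H^j(x)$ is critical, i.e. iff $x$ is pre-critical. The second ingredient is to decide the image of $k$ in $\mathbb Z$: a nonzero $k$ can be realised iff the forward orbit is eventually periodic (so that $H^n(x) = H^m(x)$ for suitable $n \neq m$ and one can build a germ by composing along the orbit and back), i.e. iff $x$ is pre-periodic; and when the orbit is periodic the set of realisable $k$'s is $p\mathbb Z$ where $p$ is the eventual period, which still gives image $\cong \mathbb Z$. Assembling: in case a) both the kernel and the image vanish; in case b) the kernel is $0$ and the image is $\cong \mathbb Z$; in case c) the image is $0$ and the kernel is a non-zero (and, since the orbit is infinite and meets $\mathcal C$ at least once but the $\val(H^n,x)$ need not grow without bound only if $\mathcal C$ is hit finitely often — in general it is some) subgroup of $\mathbb Q/\mathbb Z$.

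For case d) I would first reduce to the periodic part: let $n$ be minimal with $y := H^n(x)$ periodic of period $p$. In case d1) some point on the periodic cycle (or on the orbit beyond $y$) is critical, so $\val(H^{kp}, y) \to \infty$ as $k \to \infty$, the kernel is an \emph{infinite} subgroup of $\mathbb Q/\mathbb Z$, and I must check that the extension by the image $\cong p\mathbb Z$ still embeds in $\mathbb Q/\mathbb Z$ — this works because a generator $\gamma$ realising the smallest positive $k$ can be chosen so that $\gamma$ composed with itself the appropriate number of times lands in the torsion kernel, so the whole group is torsion and divisible-ish enough to inject into $\mathbb Q/\mathbb Z$; concretely, going once around the cycle multiplies "angles" by $\val(H^p,y)^{\pm 1}$, a root-of-unity type action, and the semidirect product collapses to an abelian torsion group. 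In case d2) no point beyond $H^n(x)$ is critical, so the only critical behaviour is among $H^0(x),\dots,H^{n-1}(x)$, contributing a finite cyclic factor $\mathbb Z_d$ with $d = \val(H^n,x)$ (the part of the kernel that survives — germs that move $x$ but whose effect is killed after pushing forward by $H^n$), while the $k$-direction contributes a free $\mathbb Z$ generated by the "once around the cycle" germ, which acts trivially on that $\mathbb Z_d$ because beyond step $n$ there is no ramification; hence $\left(G_X\right)^x_x \cong \mathbb Z \oplus \mathbb Z_d$.

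I expect the main obstacle to be case d1), specifically verifying carefully that the (a priori possibly non-abelian, as a semidirect product of $\mathbb Z$ acting on a subgroup of $\mathbb Q/\mathbb Z$) extension is in fact an honest abelian \emph{torsion} group isomorphic to a subgroup of $\mathbb Q/\mathbb Z$: one must check that the generator realising the minimal positive $k$ has finite order in $\left(G_X\right)^x_x$, which amounts to the fact that going around the periodic cycle $p$ times and composing with the inverse of an appropriate power of the local monodromy germ returns the identity germ at $x$. A secondary subtlety is bookkeeping the identification of germs: one must consistently use Lemma~\ref{first} to translate "same derivative at $x$" into "same germ", and ensure that when $n,m$ are taken large enough the ambiguity in the pair $(n,m)$ realising a given $k$ does not change the germ — this is where analyticity (identity theorem), already invoked to get Assumption~\ref{crux2}, does the work.
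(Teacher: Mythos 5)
Your overall framework (the homomorphism $[x,k,\eta,x]\mapsto k$ to $\mathbb Z$, with kernel the increasing union of the cyclic groups of Lemma \ref{cyclicgroup}) matches the paper's proof for cases a)--c), but your treatment of case d1) contains a genuine error, and it traces back to a false claim in your second paragraph. You assert that a nonzero $k$ is realisable as soon as $x$ is pre-periodic, so that in case d1) the image of the $k$-homomorphism is $p\mathbb Z$, and you then try to rescue the conclusion by arguing that a generator $\gamma$ realising the minimal positive $k$ has a power landing in the torsion kernel, so that the whole group is torsion. This is internally inconsistent: since $k$ is additive under composition, any element with $k\neq 0$ has infinite order and no power of it can lie in the kernel $\{k=0\}$; equivalently, a subgroup of $\mathbb Q/\mathbb Z$ is torsion and therefore cannot map onto $p\mathbb Z$. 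The actual content of d1) is that \emph{no} element with $k\neq 0$ exists. Indeed, realising $k=n_0p\neq 0$ by a germ $\eta$ with $H^{n_0p+lp+n}(z)=H^{lp+n}(\eta(z))$ forces, via Lemma \ref{first}, the valency identity $\val\left(H^{n_0p+lp+n},x\right)=\val\left(H^{lp+n},x\right)$, i.e. $\val\left(H^p,H^n(x)\right)^{n_0+l}=\val\left(H^p,H^n(x)\right)^{l}$, hence $\val\left(H^p,H^n(x)\right)=1$; this is impossible when some $H^j(x)$ with $j\geq n$ is critical. So in d1) the isotropy group consists only of $k=0$ elements, is a union of an increasing sequence of finite cyclic groups, and is infinite because $\val\left(H^{m_0+lp},x\right)\geq 2^l$ grows without bound. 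Your criterion ``nonzero $k$ realisable iff $x$ is pre-periodic'' must be replaced by ``pre-periodic with no critical point on the eventual cycle,'' which is precisely the d1)/d2) dichotomy; as stated, your criterion would also wrongly give an image $\cong\mathbb Z$ in case d1).

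A secondary point: in d2) your reason for commutativity (``the $\mathbb Z$-direction acts trivially because beyond step $n$ there is no ramification'') is not an argument. The clean justification, as in the paper, is that for the germ $\xi$ realising $k=p$ and any $\mu$ in the order-$d$ cyclic group $F$, the chain rule gives $\left(\xi\circ\mu\circ\xi^{-1}\right)'(x)=\mu'(x)$, and by the last statement of Lemma \ref{first} germs satisfying a) and b) with the same derivative at $x$ coincide; hence conjugation by $\xi$ is trivial on $F$ and the extension is $\mathbb Z\oplus\mathbb Z_d$. With these two repairs your plan coincides with the paper's proof.
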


\begin{proof} a) Assume that $[x,k,\eta,x] \in \left(G_X\right)^x_x$ is not in the
unit space. If $k \neq 0$ the point $x$ is pre-periodic. If
$k = 0$ and $\left[\eta\right]_x \neq [\id]_x$ it follows from Lemma \ref{cyclicgroup}
that $2 \leq \val (H^n,x) = \val(H, H^{n-1}(x))\val(H,H^{n-2}(x)) \cdots \val(H,x)$ for all
large $n$; that is, $x$ is pre-critical.

b) Let $p \in \mathbb N$ be the
least natural number such that $H^{p+n}(x) = H^n(x)$ for all large
$n$. If $\eta$ is a holomorphic germ at $x$ such that $H^{p+n}(z) =
H^n(\eta(z))$ for all $z$ in a neighbourhood of $x$, it follows that
$H^n \in \mathcal H_x$ since $\val(H^n,x) =\val (H^{n+p},x) = 1$ and hence that 
$\eta = H^{-n}\circ H^{p+n}$ in $\mathcal H_x$. Thus
$$
\left(G_X\right)^x_x = \left\{ \left[x,kp, \eta^k,x\right] : \ k \in \mathbb
  Z  \right\} \ \simeq \ \mathbb Z .
$$

c)  Since $x$ is not pre-periodic
\begin{equation}\label{lur}
\left(G_X\right)_x^x = \bigcup_n \left\{ [x,0,\eta,x] : \ \eta \in
  \mathcal H_x(n) \right\} 
\end{equation}
where $\mathcal H_x(n)$ is the set 
\begin{equation*}
\left\{ \eta \in \mathcal H_x : \ \eta(x) = x \
  \text{and} \ H^n(\eta(z)) = H^n(z) \ \text{for all $z$ in a
    neighbourhood of $x$} \right\} .
\end{equation*}
It follows from Lemma \ref{cyclicgroup} that $\mathcal H_x(n)$ is a
finite cyclic group which is non-zero for all large $n$ since $x$ is
pre-critical. Hence $\left(G_X\right)^x_x$ is a non-zero union of
an increasing sequence of finite cyclic groups and hence a non-zero
subgroup of $\mathbb Q/\mathbb Z$. 

d1)  Let
$p$ be the period of $H^n(x)$. Let $[x,k,\eta,x]  \in G_X$ and assume that $k \neq 0$. Then 
$k = n_0p$ for some $n_0\in \mathbb Z \backslash \{0\}$. Let $l \in
\mathbb N$ be so large that $n_0 + l > 0$. Let $\eta \in \mathcal T^k\left(H|_X\right)$
satisfy $\eta(x) = x$ and
$H^{n_0p + lp + n}(z) = H^{lp +n}\left(\eta(z)\right)$ for all $z$ in
an neighbourhood of $x$ and some $n \in \mathbb N$. Then
\begin{equation*}
\begin{split}
& \val\left(H^p,H^n(x)\right)^{n_0 +
  l}\val(H^n,x) = \val(H^{n_0p + lp + n},x) \\
& = \val\left(H^{lp +n},x\right) = \val\left(H^p,
  H^n(x)\right)^l \val\left(H^n,x\right)
\end{split}
\end{equation*} 
and we conclude that
$$
\val\left(H^p,H^n(x)\right)^{n_0 +
  l} =  \val\left(H^p,
  H^n(x)\right)^l .
$$
Since $n_0 \neq 0$ this implies that
$\val\left(H^p,H^n(x)\right) = 1$ and hence by periodicity that $\val\left(H,
  H^j(x)\right) = 1$ for all $j \geq n$. This contradicts that we are
in case d1) and we conclude
therefore that $k = 0$ when $[x,k,\eta,x]  \in G_X$, i.e. the equality
(\ref{lur}) holds. To see that $\left(G_X\right)_x^x$ is infinite note that there is an $m_0\in
\mathbb N$ such that $H^{m_0}(x)$ is $p$-periodic and
$\val\left(H,H^{m_0}(x)\right) \geq 2$. Then $\val\left(H^{lp},H^{m_0}(x)\right) \geq 2^l$ for all $l \in \mathbb N$
and hence $\val\left( H^{m_0+lp},x \right) \geq 2^l$ for all $l
\in \mathbb N$. It follows that $\lim_{n \to \infty} \# \mathcal H_x(n) = \infty$. 

d2) In this case $d = \val(H^n,x) \geq 2$, and by Lemma
\ref{cyclicgroup} the elements $\eta \in \mathcal H_x$ which satisfy
that $\eta(x) = x$ and that $H^n(\eta(z)) = H^n(z)$ is a neighbourhood
of $x$ form a cyclic group $F$ of order $d$. By assumption
$\val\left(H^p,H^n(x)\right) = 1$; i.e. 
$H^p \in \mathcal H_{H^n(x)}$. Since $\val \left(
  \left(H^p\right)^{-1} \circ H^n,x\right) = \val\left(H^n,x\right)$, it
  follows from Lemma \ref{first} that there is a conformal germ $\xi$
  at $x$ such that $\xi(x) = x$ and $\left(H^p\right)^{-1} \circ
  H^n(z) = H^n(\xi(z))$ in a neighbourhood of $x$. Then
$$
\left[x,kp, \mu \circ \xi^{-k} ,x\right] \in \left(G_X\right)_x^x 
$$
for all $k \in \mathbb Z$ and all $\mu \in F$. When $[x,kp,\mu, x],
[x,kp,\mu_1,x] \in \left(G_X\right)^x_x$ we have that $H^n(\mu(z)) =
H^n(\mu_1(z))$ in a neighbourhood of $x$ and hence $\mu^{-1}\circ \mu_1
\in F$. Thus
$$
\left(G_X\right)_x^x =  \left\{ \left[x,kp, \mu \circ \xi^{-k}
    ,x\right] : \ k \in \mathbb Z, \ \mu \in F \right\} 
$$
which is clearly an extension of $\mathbb Z$ by $\mathbb Z_d$. It
follows from the chain rule and the last statement of Lemma \ref{first}
that $\left[\xi \circ \mu \circ \xi^{-1}\right]_x = [\mu]_x$ for all
$\mu \in F$. This shows that $\left(G_X\right)_x^x$ is abelian
and therefore isomorphic to $\mathbb Z \oplus \mathbb Z_d$.
\end{proof}






\section{The conformal action and its KMS states}

\subsection{The conformal action}\label{confac}

Consider $S$ as a 1-dimensional complex manifold with a given
metric $g$, by which we mean that $g$ is a continuous choice of norms on the tangent spaces.
By definition of $G_X$ we can then define a map $L : G_X \to \mathbb R$
such that
\begin{equation}\label{Ldef}
L[x,k,\eta,y] = \log \left|\eta'(x)\right|_g
\end{equation}
where $\eta'$ denotes the differential of $\eta$ and
$\left|\eta'(x)\right|_g$ the norm of $\eta'(x)$ calculated with respect
to the metric $g$. Then $L[x,k,\eta,y][y,l,\mu,z] = L[x,k,\eta,y] +
L[y,l,\mu, z]$,
i.e. $L$ is a homomorphism $L : G_X \to \mathbb R$ which is obviously
continuous. The corresponding one-parameter group $\gamma
=\left(\gamma_t\right)$ of
$C^*_r\left(G_X\right)$ is determined by the condition that
\begin{equation}\label{Lform}
\gamma_t(f)[x,k,\eta,y] = \left|\eta'(x)\right|_g^{it} f[x,k,\eta, y]
\end{equation}
when $f \in C_c\left(G_X\right)$. We call $\gamma$ \emph{the conformal
  action}. 

By construction the conformal action depends on the chosen 
metric. However, the choice of another metric does not change the
structure of the KMS states when $X$ is compact. To see
this let $g^1$ be another metric on $S$. There is then a strictly positive
function $r : S \to ]0,\infty[$ such that $g^1 = rg$. It follows that when we use $g^1$ instead of $g$
in (\ref{Ldef}) we get a continuous homomorphism $L^1 : G_X \to
\mathbb R$ such that
\begin{equation}\label{LL1}
L^1[x,k,\eta,y] = L[x,k,\eta,y] + \log r(y) - \log r(x).
\end{equation}
Let $\gamma^1$ be the automorphism group obtained from $L^1$. Set
$u_t(x) = e^{-it \log r(x)}$. Then $\{u_t\}_{t \in \mathbb R}$ is a strictly continuous unitary
one-parameter group in $M(C^*_r(G_X))$ - the multiplier algebra of
$C^*_r\left(G_X\right)$ - and it is easy to check that
\begin{equation}\label{innerrel}
u_t\gamma_t(a)u_t^* = \gamma^1_t(a)
\end{equation}
for all $t $ and $a$. Note that the $u_t$'s are in the fixed
point algebra of both $\gamma$ and $\gamma^1$. In particular, it follows therefore
from (\ref{innerrel}) that $\gamma$ and $\gamma^1$ are
exterior equivalent, cf. 8.11.3 of \cite{Pe}. It also gives the following.

\begin{lemma}\label{metricdep} Assume that $X$ is compact. Let
  $\beta \in \mathbb R$. There is a
  bijective map from the $\beta$-KMS states of $\gamma$ onto the $\beta$-KMS
  states of $\gamma^1$.
\end{lemma}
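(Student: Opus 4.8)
The plan is to use the exterior equivalence of $\gamma$ and $\gamma^1$ established in equations (\ref{innerrel}) and (\ref{LL1}) together with the continuous unitary one-parameter group $\{u_t\}$ lying in the common fixed-point algebra, in order to transport KMS states from one action to the other. The natural guess is that if $\omega$ is a $\beta$-KMS state for $\gamma$, then $\omega$ itself, or a cocycle-perturbation of it, is a $\beta$-KMS state for $\gamma^1$; since $\{u_t\}$ is fixed by $\gamma$ and the cocycle is $u_t$ itself, one expects the map to be simply $\omega \mapsto \omega$ on the nose, and compactness of $X$ enters only to guarantee that $\log r$ is bounded so that $u_t$ is a genuine (bounded, strictly continuous) unitary in $M(C^*_r(G_X))$ rather than an unbounded affiliated element.

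First I would record that, because $X$ is compact and $r : S \to ]0,\infty[$ is continuous and strictly positive, the function $\log r$ is bounded on $X$, so $u_t(x) = e^{-it\log r(x)} \in C(X) = C_0(G_X^{(0)})$ defines a strictly continuous one-parameter unitary group in $M(C^*_r(G_X))$, and moreover each $u_t$ lies in $C_0(G_X^{(0)})$ which sits inside both fixed-point algebras (indeed $\gamma_t$ and $\gamma^1_t$ act trivially on $C_c(G_X^{(0)})$ since $L$ and $L^1$ vanish on the unit space). Next I would invoke the general fact (for instance 8.11.3--8.11.4 of \cite{Pe}, or a direct computation) that if $v_t$ is a $\gamma$-cocycle with $v_t$ in the fixed-point algebra of $\gamma$ and $\gamma^1_t = \Ad(v_t)\circ\gamma_t$, then a state $\omega$ is $\beta$-KMS for $\gamma$ if and only if $\omega$ is $\beta$-KMS for $\gamma^1$. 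Concretely: for $a,b$ analytic, one checks $\omega^1(a\gamma^1_{i\beta}(b)) = \omega(ba)$ reduces to the corresponding identity for $\gamma$ after absorbing the factors $v_t$, using that $\omega$ has $C_0(G_X^{(0)})$ — and in particular the $u_t$'s — in its centraliser (every KMS state does, as noted after Lemma \ref{lambda}), so the $u_t$ factors commute past $\omega$ and cancel. This gives the bijection: the identity map on states restricts to a bijection from the $\beta$-KMS states of $\gamma$ onto those of $\gamma^1$.

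The main obstacle is not conceptual but bookkeeping: one must be careful that the analytic elements used to test the KMS condition behave well under the perturbation, i.e. that $\gamma^1$-analytic elements are $\gamma$-analytic and vice versa (true because the perturbation is inner via a bounded cocycle), and that the cancellation of the $u_t$-factors genuinely uses centrality of $u_t$ in the given state, which in turn uses that $u_t \in C_0(G_X^{(0)})$. I expect the cleanest write-up to avoid a from-scratch cocycle argument and instead cite exterior equivalence directly: exterior equivalent one-parameter groups have affinely homeomorphic (hence bijective) $\beta$-KMS state spaces whenever the implementing cocycle is bounded, which holds precisely because $X$ is compact. So the proof is essentially: ``By (\ref{innerrel}) and the boundedness of $\log r$ on the compact set $X$, $\gamma$ and $\gamma^1$ are exterior equivalent via a cocycle lying in $C_0(G_X^{(0)})$, which is contained in the centraliser of every KMS state; hence $\omega \mapsto \omega$ is the desired bijection.''
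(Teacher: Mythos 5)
Your central step---the claim that the $u_t$-factors ``cancel'' so that $\omega$ itself is a $\beta$-KMS state for $\gamma^1$---is false, and this is where the proposal breaks down. Exterior equivalence by a cocycle lying in the fixed-point algebra does \emph{not} preserve KMS states; it only gives a bijection after a perturbation of the state. Indeed, writing $u_t = e^{itH}$ with $H = -\log r$, analytic continuation of (\ref{innerrel}) gives $\gamma^1_{i\beta}(b) = e^{-\beta H}\gamma_{i\beta}(b)e^{\beta H}$, and in testing $\omega\bigl(a\gamma^1_{i\beta}(b)\bigr) = \omega(ba)$ the centraliser property of Lemma \ref{lambda} only lets you move $e^{\pm\beta H}$ across the state, leaving $\omega\bigl(e^{\beta H}a e^{-\beta H}\gamma_{i\beta}(b)\bigr)$; since $e^{\beta H}$ does not commute with a general $a$, nothing cancels. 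The failure is also visible at the level of measures: by Theorem \ref{decomp!} the measure associated with a $\beta$-KMS state for $\gamma$ satisfies (\ref{Gconf}) with $c = L$, whereas for $\gamma^1$ it must satisfy (\ref{Gconf}) with $c = L^1$, which by (\ref{LL1}) differs from $L$ by the coboundary $\log r(y) - \log r(x)$; a measure satisfying one relation satisfies the other only after multiplication by a density proportional to $r^{\pm\beta}$, so for non-constant $r$ and $\beta \neq 0$ a $\beta$-KMS state for $\gamma$ is in general \emph{not} a $\beta$-KMS state for $\gamma^1$. (A minimal sanity check: for $\gamma_t = \Ad e^{itH_0}$ and $\gamma^1_t = \Ad e^{it(H_0+H)}$ on a matrix algebra with $[H,H_0]=0$, the two Gibbs states differ, even though $H$ is fixed by both dynamics.)

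The correct argument, and the one the paper gives, is the standard perturbation of states: since $X$ is compact, $r$ is bounded away from $0$ and $\infty$, so $h = e^{-\frac{\beta}{2}\log r}$ lies in $C(X) \subseteq C^*_r(G_X)$, and $\omega \mapsto \omega(h\,\cdot\,h)/\omega(h^2)$ is a bijection from the $\beta$-KMS states of $\gamma$ onto those of $\gamma^1$, the inverse being given by the reciprocal density. Note also that you locate the role of compactness incorrectly: $u_t$ is a unimodular continuous function, hence a unitary multiplier whether or not $X$ is compact; compactness is needed precisely so that the perturbing element $h$ (and its inverse) is bounded, which is what makes the perturbed state well defined and the correspondence invertible. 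Remark \ref{flat} shows how badly the identity-map claim must fail without this: for non-compact $X$ one metric can yield no KMS states at all while a conformally equivalent metric yields one, which would be impossible if KMS states were simply preserved under the cocycle perturbation.
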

\begin{proof} Since $X$ is compact, $r$ is bounded away from
  both $0$ and $\infty$. Then $h(x) = e^{-\frac{ \beta}{2} \log r(x)} \in
 C^*_r(G_X)$. When $\omega$ is a $\beta$-KMS state for $\gamma$
  the state
$$
a \mapsto \frac{\omega(hah)}{\omega(h^2)} 
$$
will be $\beta$-KMS state for $\gamma^1$, giving us a bijection with
the obvious inverse. 
\end{proof}

When $X$ is not compact the conformal action and its KMS states
depends very much on the metric. See Remark \ref{flat}.


\subsection{The KMS states of the conformal action}\label{KMSconf7}

It follows from Theorem \ref{decomp!} that the $\beta$-KMS states for
the conformal action can be described in terms of non-atomic
$(G_X,L)$-conformal measures and atomic measures supported on consistent
and $\beta$-summable $G_X$-orbits, as defined in Section \ref{A}. We describe in this section what these notions become in the
present setting and formulate the according version of Theorem
\ref{decomp!}. 

Let $\beta \in \mathbb R \backslash \{0\}$. Note that a finite Borel measure $m$ on $X$
is $(G_X,L)$-conformal with
    exponent $\beta$, as defined in Section \ref{A}, when the equality
\begin{equation}\label{sconformal}
m(\eta(U)) = \int_U \left|\eta'(x)\right|_g^{\beta} dm(x)
\end{equation}
holds for all local transfers $\eta : U \to V$ for $H|_X$. We shall
compare this to the more conventional notion of conformality which
goes back to the work of Sullivan in \cite{S}. Thus a finite Borel
measure $m$ on $X$ is \emph{$\beta$-conformal} when
\begin{equation}\label{confequa}
m(H(A)) = \int_A \left|H'(x)\right|_g^{\beta} \ dm(x)
\end{equation}
for every Borel subset $A \subseteq X$ for which $H : A \to X$ is
injective.

\begin{lemma}\label{n-mlemma4} Let $m$ be a $\beta$-conformal
  measure. Let $U,V$ be open subsets of $X$ and $n,l \in \mathbb N$
  natural numbers such that $H^n(U) = H^l(V)$. Assume that $H^n$ and $H^l$ are injective on $U$ and $V$, respectively. Then
\begin{equation}\label{desire} 
m(V) = m(H^{-l}(H^n(U)) = \int_U \left|\left( H^{-l} \circ H^n\right)'(x)\right|_g^{\beta} \ d m(x)
\end{equation}
where $H^{-l}$ denotes the inverse of $H^l : V \to H^l(V)$.
\end{lemma}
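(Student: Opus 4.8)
The plan is to reduce the statement to the $\beta$-conformality identity (\ref{confequa}) applied to a carefully chosen Borel set, and then to recognize the Radon--Nikodym factor as a chain-rule product of the pointwise derivatives of $H$. The key observation is that the hypotheses give us two injective branches, $H^n|_U$ and $H^l|_V$, landing on the \emph{same} open set $W = H^n(U) = H^l(V)$. So $H^{-l}\circ H^n$ is a well-defined homeomorphism $U \to V$ (indeed a local transfer for $H|_X$ in the sense of Section \ref{sec1}), and $V = (H^{-l}\circ H^n)(U)$; this gives the first equality in (\ref{desire}) immediately.

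For the substantive equality, first I would apply $\beta$-conformality to $H^n$ on $U$. Since $H^n$ need not itself be one of the maps in (\ref{confequa}) — that identity is stated for $H$, not its iterates — I would first record the iterated version: if $H^n$ is injective on a Borel set $A$ then $m(H^n(A)) = \int_A |(H^n)'(x)|_g^\beta\,dm(x)$, which follows by induction on $n$ from (\ref{confequa}) together with the chain rule $|(H^n)'(x)|_g = \prod_{j=0}^{n-1}|H'(H^j(x))|_g$ (here the metric factors telescope, so this is exactly the norm of the differential of the composite). Concretely, writing $A_j = H^j(U)$, at each step $H$ is injective on $A_j$ because $H^{n}$ is injective on $U$, and one composes the identities $m(A_{j+1}) = \int_{A_j}|H'|_g^\beta\,dm$. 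Applying this with $A = U$ gives $m(W) = \int_U |(H^n)'(x)|_g^\beta\,dm(x)$, and applying it with $A = V$ (using injectivity of $H^l$ on $V$) gives $m(W) = \int_V |(H^l)'(y)|_g^\beta\,dm(y)$.

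Now I would push the second identity forward along the homeomorphism $\Phi = H^{-l}\circ H^n : U \to V$. Using the change-of-variables / transport-of-measure statement — equivalently, the uniqueness of the Radon--Nikodym cocycle — the relation $m(W) = \int_V |(H^l)'(y)|_g^\beta\,dm(y)$ rewritten over $U$ via $y = \Phi(x)$ becomes $m(W) = \int_U |(H^l)'(\Phi(x))|_g^\beta\,\Phi_*^{-1}$-stuff; the cleanest route is simply to note that for \emph{every} Borel $B \subseteq U$, applying the iterated identity to $H^n|_B$ and to $H^l|_{\Phi(B)}$ (both injective, both with image $H^n(B)$) yields $\int_B |(H^n)'|_g^\beta\,dm = m(H^n(B)) = \int_{\Phi(B)}|(H^l)'|_g^\beta\,dm$, and then the chain rule $|(H^n)'(x)|_g = |(H^l)'(\Phi(x))|_g \cdot |\Phi'(x)|_g$ (valid since $H^l\circ\Phi = H^n$ near $x$) lets one rewrite the left side as $\int_B |(H^l)'(\Phi(x))|_g^\beta |\Phi'(x)|_g^\beta\,dm(x)$. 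Taking $B = U$ and using $m(V) = m(\Phi(U))$, this last expression is $m(V)$ once we know $\int_{\Phi(U)}|(H^l)'|_g^\beta\,dm = m(\Phi(U)) = m(V)$ — wait, that is not automatic; rather one concludes directly: the two Borel measures on $U$ given by $B \mapsto \int_{\Phi(B)}|(H^l)'|_g^\beta\,dm$ and $B \mapsto m(\Phi(B))$... no. The honest statement is that $m(V) = m(\Phi(U))$ and, substituting, $m(\Phi(U)) = \int_U |(H^l\circ\Phi)'(x)|_g^\beta\,dm(x) / $ — here is where care is needed.

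I expect the main obstacle to be precisely this bookkeeping: keeping straight which derivative is evaluated at which point and invoking $\beta$-conformality (or its iterate) at the right set so that no spurious Jacobian appears. The cleanest fix, and the one I would actually write, is to avoid transporting measures at all: apply the iterated $\beta$-conformality identity to $V$ directly, $m(V) = \frac{?}{}$ — no, $V$ is not of the form $H^k(\text{something simple})$. So instead: apply the iterated identity with $A = U$ to get $m(H^n(U)) = \int_U |(H^n)'|_g^\beta\,dm$, then observe $|(H^n)'(x)|_g = |(H^l)'(\Phi(x))|_g\,|\Phi'(x)|_g$ gives $|\Phi'(x)|_g^\beta = |(H^n)'(x)|_g^\beta / |(H^l)'(\Phi(x))|_g^\beta$, and separately $m(H^l(V)) = \int_V |(H^l)'|_g^\beta\,dm$; subtracting or dividing these two identities over the bijection $\Phi$ and using that they have the same left-hand side forces $\int_U |\Phi'(x)|_g^\beta\,dm(x) = m(V)$ once one checks the elementary measure-theoretic identity $\int_U F(x)\,dm(x) = \int_V G(y)\,dm(y)$ whenever $F = (G\circ\Phi)\cdot h$ and $\int_{\Phi(B)} G\,dm = \int_B (G\circ\Phi)\cdot h\,dm$ for all $B$ — which is exactly what the two $\beta$-conformality identities assert. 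I would present this last measure-theoretic step as the crux and carry it out explicitly, leaving the chain-rule computation for $|(H^n)'|_g$ and the inductive proof of iterated $\beta$-conformality as routine.
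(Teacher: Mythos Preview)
Your approach is correct and genuinely different from the paper's. The paper proves the identity by a direct $\epsilon$-partition argument: it fixes a compact $K \subseteq U$, chops $K$ into pieces on which all the relevant derivatives are nearly constant, and then peels off one factor of $H$ (or $H^{-1}$) at a time using (\ref{confequa}), tracking the accumulated error through a long chain of approximate equalities before letting $\epsilon \to 0$. Your route is cleaner: prove once that $\beta$-conformality iterates, i.e.\ $m(H^k(B)) = \int_B |(H^k)'|_g^\beta\,dm$ whenever $H^k$ is injective on the Borel set $B$ (by induction and change of variables), then apply this to $H^n|_U$ and $H^l|_V$ and combine via the chain rule for $H^l \circ \Phi = H^n$. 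What you gain is transparency and brevity; the paper's argument is more self-contained in that it avoids the change-of-variables machinery. Your last paragraph is muddled, but the ``crux'' you leave open is short: from $\int_{\Phi(B)} G\,dm = \int_B (G\circ\Phi)\,|\Phi'|_g^\beta\,dm$ for all Borel $B\subseteq U$, where $G = |(H^l)'|_g^\beta$, you want $m(\Phi(B)) = \int_B |\Phi'|_g^\beta\,dm$. Defining the measure $\mu$ on $V$ by $\mu(C) = \int_{\Phi^{-1}(C)} |\Phi'|_g^\beta\,dm$, your identity reads $G\cdot m|_V = G\cdot \mu$; dividing by $G$ gives $m|_V = \mu$, which is the claim. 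This division requires $G>0$, i.e.\ $(H^l)'$ nonvanishing on $V$ --- an assumption implicit already in the statement (since $(H^{-l}\circ H^n)'$ must exist) and equally in the paper's proof (its constant $L$ would otherwise be infinite), and which holds in the only place the lemma is applied.
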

\begin{proof} By regularity of the Borel measures on $U$ given by $B
  \mapsto m(H^{-l}(H^n(B))$ and $B \mapsto \int_B \left|\left( H^{-l}
      \circ H^n\right)'(x)\right|_g^{\beta} \ d m(x)$ it suffices to
  prove that 
\begin{equation}\label{desire2}
m(H^{-l}(H^n(K)) = \int_K \left|\left( H^{-l} \circ
      H^n\right)'(x)\right|_g^{\beta} \ d m(x)
\end{equation}
when $K \subseteq U$ is compact.

 In the following we write $t \overset{\delta}{\sim} s$
  when $s,t, \delta \in \mathbb R, \ \delta > 0$ and $\left|s-t\right|
  \leq \delta$, and the inverses $H^{-1}$ which occur all come from
  the branch 
\begin{equation*}
\begin{xymatrix}{
H^l(V) \ar[r]^-{H^{-1}} & H^{l-1} (V) \ar[r]^-{H^{-1}} & H^{l-2} (V)
\ar[r]^-{H^{-1}} & \cdots \cdots \ar[r]^-{H^{-1}} & V . }
\end{xymatrix}
\end{equation*} 
Let $\epsilon
  \in ]0,1[$. We can then find a finite Borel partition $K = \sqcup_i K_i$ of
  $K$ such that $\left|H'(a)\right|_g^{\beta}
\overset{\epsilon}{\sim} \left|H'(b)\right|_g^{\beta}$ when $a,b \in
H^{-k}(H^n(K_i)), \ 0 \leq k \leq l$, or $a,b \in H^j(K_i), \ 0 \leq
j \leq n$.
Furthermore, we can arrange that 
\begin{equation}\label{altso}
\left|\left(H^{-l} \circ H^n\right)'(x)\right|_g^{\beta}
    \overset{\epsilon}{\sim}  \left|\left(H^{-l} \circ
    H^n\right)'(y)\right|_g^{\beta}
\end{equation}
when $x,y \in K_i$. For each $i$ we choose a
point $a_i \in K_i$. Set
$$
L = \sup \left\{ \left|H'(x)\right|_g^{-\beta} + \left|H'(x)\right|_g^{\beta} : \
  x \in \bigcup_{0 \leq i \leq l} H^{-i}(H^n(K)) \cup \bigcup_{0 \leq
    j \leq n} H^i(K) \right\}.
$$

Consider now an arbitrary $i$ and set $\epsilon_1 = \epsilon
m\left(H^{-l}(H^n(K_i))\right)$. It follows then from
  (\ref{confequa}) that
\begin{equation}\label{ext11}
m\left(
  H^{-l}\left(H^n(K_i)\right)\right)\left|H'\left(H^{-l}\left(H^n(a_i)\right)\right)\right|_g^{\beta} \overset{\epsilon_1}{\sim} m\left(H^{-l+1}\left(H^n(K_i)\right)\right) 
\end{equation}
and hence, since $\left|H'\left(H^{-l}\left(H^n(a_i)\right)\right)\right|_g =
\left|(H^{-1})'\left(H^{-l+1}\left(H^n(a_i)\right)\right)\right|_g^{-1}$, that
$$
m\left(
  H^{-l}\left(H^n(K_i)\right)\right) \overset{L \epsilon_1}{\sim} \left|(H^{-1})'\left(H^{-l+1}\left(H^n(a_i)\right)\right)\right|_g^{\beta}m\left(H^{-l+1}\left(H^n(K_i)\right)\right) .
$$
It follows from (\ref{ext11})
that 
$$
m\left(H^{-l+1}(H^n(K_i))\right) \leq (L + 1)
m\left(H^{-l}(H^n(K_i))\right).
$$
In the same way we get the
estimates
$$
m\left(H^{-l+2}(H^n(K_i))\right) \leq (L + 1)
m\left(H^{-l+1}(H^n(K_i))\right),
$$
and
$$
m\left(
  H^{-l+1}\left(H^n(K_i)\right)\right) \overset{L \epsilon_2}{\sim} \left|(H^{-1})'\left(H^{-l+2}\left(H^n(a_i)\right)\right)\right|_g^{\beta}m\left(H^{-l+2}\left(H^n(K_i)\right)\right) 
$$
where $\epsilon_2 = \epsilon
m\left(H^{-l+1}(H^n(K_i))\right)$. After $l$ repetitions of these
arguments we find that 
\begin{equation}\label{firststep}
\begin{split}
& m\left(
  H^{-l}\left(H^n(K_i)\right)\right)  \overset{\epsilon'}{\sim}   \prod_{j=0}^{l-1} \left|(H^{-1})'\left(
    H^{-j}\left(H^n(a_i)\right)\right)\right|_g^{\beta} m\left(
  H^n(K_i)\right) \\
& = \left|\left(H^{-l}\right)'\left(H^n(a_i)\right)\right|_g^{\beta} m(H^n(K_i))
\end{split}
\end{equation}
where $\epsilon' = \epsilon lL^l(L+1)^lm\left(
  H^{-l}\left(H^n(K_i)\right)\right)$.

Now attack $H^n(K_i)$ in a similar way. It follows from (\ref{confequa}) that
$$
m(H^n(K_i)) \overset{\epsilon_4}{\sim}
\left|H'(H^{n-1}(a_i))\right|_g^{\beta} m(H^{n-1}(K_i))
$$
where $\epsilon_4 = \epsilon  m(H^{n-1}(K_i))$. In particular,
$$
m(H^n(K_i)) \leq (L+1)m(H^{n-1}(K_i)) .
$$
Similarly,
$$
m(H^{n-1}(K_i)) \overset{\epsilon_5}{\sim}
\left|H'(H^{n-2}(a_i))\right|_g^{\beta} m(H^{n-2}(K_i)),
$$
where $\epsilon_5 =  \epsilon m(H^{n-2}(K_i))$,
and
$$
m(H^{n-1}(K_i)) \leq (L+1)m(H^{n-2}(K_i)) .
$$
After $n$ steps we find that
$$
m(H^n(K_i)) \overset{\epsilon''}{\sim}
\prod_{j=0}^{n-1} \left|H'(H^{j}(a_i))\right|_g^{\beta} m(K_i) =
\left|(H^n)'(a_i)\right|_g^{\beta} m(K_i)
$$
where $\epsilon'' = \epsilon nL^n(L+1)^nm(K_i)$. Combining with
(\ref{firststep}) we find that
\begin{equation}\label{tot-est}
\begin{split}
&m\left(  H^{-l}\left(H^n(K_i)\right)\right) \overset{\delta}{\sim} 
 \left|\left(H^{-l}\right)'\left(H^n(a_i)\right)\right|_g^{\beta}
 \left|(H^n)'(a_i)\right|_g^{\beta} m(K_i) \\
& = \left|\left(H^{-l} \circ H^n\right)'(a_i)\right|_g^{\beta} m(K_i)
\end{split}
\end{equation}
where $\delta = \epsilon' + L^l \epsilon''$. By summing over $i$ this
gives us the estimate
$$
m\left(H^{-l}\left(H^n(K)\right)\right) \overset{\delta'}{\sim} \sum_i
  \left|\left(H^{-l} \circ H^n\right)'(a_i)\right|_g^{\beta} m(K_i) 
$$
where $\delta' = \epsilon \left(lL^l(L+1)^l + L^l
  nL^n(L+1)^n\right)$. It follows from (\ref{altso}) that
$$
\sum_i
  \left|\left(H^{-l} \circ H^n\right)'(a_i)\right|_g^{\beta} m(K_i)
  \overset{\epsilon}{\sim} \int_K \left|\left(H^{-l} \circ
      H^n\right)'(x)\right|_g d m(x),
$$
which gives us (\ref{desire2}) because $\epsilon > 0$ was arbitrary. 
\end{proof}

Let $\mathcal C$ be the set of critical points of $H$ contained in $X$.

\begin{lemma}\label{trans=conf4} Let $m$ be a finite Borel measure on $X$.
\begin{enumerate}
\item[a)] Assume that $m$ is $(G_X,L)$-conformal with exponent $\beta$ and that
  $m(H(\mathcal C)) = 0$. It follows that $m$ is
  $\beta$-conformal.
\item[b)] Assume that $m$ is $\beta$-conformal and that
  $m(\mathcal C) = 0$. It follows that $m$ is a $(G_X,L)$-conformal
  with exponent ${\beta}$.
\end{enumerate}
\end{lemma}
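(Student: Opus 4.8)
The plan is to establish each implication by relating the two defining identities \eqref{sconformal} and \eqref{confequa} on the level of Borel sets, bridging them with Lemma \ref{n-mlemma4}. For part a), I start with a Borel set $A \subseteq X$ on which $H$ is injective and must produce \eqref{confequa}. After discarding the $m$-null set $A \cap H^{-1}(H(\mathcal C))$ — which is null precisely because $m(H(\mathcal C)) = 0$ and $H$ is injective on $A$, so $H(A \cap H^{-1}(H(\mathcal C))) \subseteq H(\mathcal C)$, hence the Radon–Nikodym relation forces it — I may assume $A$ avoids all preimages of critical values. Then $H$ restricted to a neighbourhood of each point of $A$ is locally biholomorphic, so locally $H^{-1}$ is a branch of a local transfer $\eta : H(U) \to U$ in $\mathcal T_{-1}(H|_X)$ with $\eta = (H|_U)^{-1}$. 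Covering $A$ by countably many such pieces (using second countability of $G_X$ from Proposition \ref{etale!!}, or directly of $X$) and invoking \eqref{sconformal} with this $\eta$, together with the chain-rule identity $|(H^{-1})'(H(x))|_g = |H'(x)|_g^{-1}$, one obtains $m(A) = \int_{H(A)} |H'(\eta(y))|_g^{-\beta}\,dm(y)$; a change of variables back along $H$, patched over the countable cover with inclusion-exclusion, yields \eqref{confequa}.

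For part b) I go the other direction: given $m$ that is $\beta$-conformal with $m(\mathcal C) = 0$, I must verify \eqref{sconformal} for an arbitrary local transfer $\eta : U \to V$ with $\psi^n = \psi^m \circ \eta$ on $U$, i.e. $H^n(z) = H^m(\eta(z))$. This is exactly the hypothesis of Lemma \ref{n-mlemma4}, except that that lemma requires $H^n$ injective on $U$ and $H^m$ injective on $V$. So the first move is to shrink $U$: since $m(\mathcal C) = 0$ and hence (by $\beta$-conformality, iterated) $m\big(\bigcup_{j\ge 0} H^{-j}(\mathcal C)\big) = 0$, the set of points through which some iterate $H^n$ or $H^m$ fails to be locally injective is $m$-null, so $U$ can be covered up to an $m$-null set by countably many open pieces $U_i$ on which $H^n$ and $H^m \circ \eta = H^n$ are injective — and on such a piece $H^m$ is injective on $\eta(U_i)$ as well. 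On each $U_i$, Lemma \ref{n-mlemma4} gives $m(\eta(U_i)) = \int_{U_i} |(H^{-m}\circ H^n)'(x)|_g^\beta\,dm(x)$, and since $H^{-m} \circ H^n = \eta$ as germs (both map $U_i$ biholomorphically with $H^m \circ (\cdot) = H^n$, and by Lemma \ref{first} such a map is determined by its derivative — here they literally agree as holomorphic maps on the connected piece), the integrand is $|\eta'(x)|_g^\beta$. Summing over the countable cover with the usual inclusion–exclusion bookkeeping gives \eqref{sconformal}.

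The main obstacle in both parts is the passage from the \emph{local} form of the identities (valid on small open pieces where the relevant iterates are injective and the branches of $H^{-1}$ are unambiguous) to the \emph{global} Borel statement, while controlling overlaps. This is handled by the same regularity-plus-countable-partition device already used in the proof of Lemma \ref{n-mlemma4} and in Lemma \ref{decomp!!}: reduce to compact sets, partition into finitely many pieces lying in single coordinate charts, pass to limits, and use inner/outer regularity of both set functions $B \mapsto m(\eta(B))$ and $B \mapsto \int_B |\eta'|_g^\beta\,dm$. The critical-value null hypotheses ($m(H(\mathcal C)) = 0$ in a), $m(\mathcal C) = 0$ in b)) are exactly what is needed to throw away the bad loci; I would remark that without them the two notions genuinely differ, since a point mass at a critical value is $(G_X,L)$-conformal but its pullback structure breaks \eqref{confequa}, and conversely.
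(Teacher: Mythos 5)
Your part b) follows the paper's own route (kill the pre-critical set, which is $m$-null by iterating $\beta$-conformality from $m(\mathcal C)=0$; apply Lemma \ref{n-mlemma4} on pieces where $H^n$ is injective; pass from open to Borel pieces by regularity and patch), with one omission you should repair: after deleting the $m$-null bad set $Z\subseteq\bigcup_{j\geq 0}H^{-j}(\mathcal C)$ from $U$ you must also know that $m(\eta(Z))=0$ in order to recover $m(V)$ from the sum over the pieces, and this needs the extra observation that $\eta(Z)$ is again contained in $\bigcup_{j\geq 0}H^{-j}(\mathcal C)$ (use $H^m\circ\eta=H^n$ and the valency identity $\val(H^n,z)=\val(H^m,\eta(z))$); also ``inclusion--exclusion over a countable cover'' should be the usual disjointification plus regularity, as you indicate elsewhere.

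Part a), however, has a genuine gap: you discard $A\cap H^{-1}(H(\mathcal C))$ as an $m$-null set, but the hypothesis is only $m(H(\mathcal C))=0$, \emph{not} $m(\mathcal C)=0$, and $(G_X,L)$-conformality cannot push nullity backwards through a critical point, because near $c\in\mathcal C$ the map $H$ is not injective and hence is not a local transfer; so \eqref{sconformal} gives no relation between $m(\{c\})$ and $m(\{H(c)\})$, and your ``Radon--Nikodym relation forces it'' only covers $\bigl(A\cap H^{-1}(H(\mathcal C))\bigr)\setminus\mathcal C$. This is not a vacuous worry: the lemma is applied (in the proof of Lemma \ref{summ}) to the purely atomic measure $m_{\mathcal O}$ supported on $\bigcup_{n\geq 0}R_c^{-n}(0)$ for a Collet--Eckmann quadratic with non-pre-periodic critical point and $\beta>HD(J_{R_c})$; that measure has an atom at the critical point $0\in\mathcal C$ while the critical value lies outside the orbit, so $m(H(\mathcal C))=0$ but $m(\mathcal C)>0$. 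Consequently your passage from the identity on $A\setminus H^{-1}(H(\mathcal C))$ back to $A$ is unjustified as written. The correct bookkeeping is asymmetric: the null hypothesis is needed only on the image side, $m(H(A))=m(H(A\setminus\mathcal C))$ since $H(A)\setminus H(A\setminus\mathcal C)\subseteq H(\mathcal C)$, while on the integral side one uses that $|H'|_g^{\beta}$ vanishes on $\mathcal C$ (for $\beta>0$) so that $\int_{A\setminus\mathcal C}|H'|_g^{\beta}\,dm=\int_A|H'|_g^{\beta}\,dm$; then partition $A\setminus\mathcal C$ into Borel pieces lying in open sets where $H$ is univalent and apply \eqref{sconformal} directly to the forward transfer $H$ on each piece. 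This also removes your detour through the inverse branch in $\mathcal T_{-1}(H|_X)$ and the unproved ``change of variables back along $H$'', which is unnecessary since $H$ itself is already a local transfer on each univalent piece.
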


\begin{proof} a) Let $A \subseteq X$ be a Borel subset such that $H$
  is injective on $A$. Since $H(A) \backslash H(A \cap \mathcal C) = H(A
  \backslash \mathcal C)$ it follows that $m(H(A)) = m(H(A \backslash
  \mathcal C))$ beacuse $m(H(\mathcal C)) =0$ by assumption. Write $X \backslash \mathcal C = \bigcup_{i=1}^{\infty} U_i \cap X$ where $U_i$
  is open in $S$ and $H$ is univalent on $U_i$ for all $i$. There is a partition $A \backslash \mathcal C =
  \sqcup_{i=1}^{\infty} A_i$ of $A \backslash \mathcal C$ into Borel
  sets such that $A_i \subseteq U_i$. Let $V \subseteq U_i$ be an open
  subset. Since $H$ is univalent on $U_i$ and $H^{-1}\left(X\right)
  = X$ it follows that $H : V \cap X \to H(V) \cap X$ is a local
  transfer for $H|_X$ and hence that
$$
m\left(H(V \cap X)\right) = \int_{V\cap X} \left|H'(z)\right|_g^{\beta} dm(z)
$$
since $m$ is $(G_X,L)$-conformal. The regularity of the Borel measures
on $U_i \cap X$ given by
$D \mapsto m(H(D))$ and $D \mapsto \int_D  \left|H'(z)\right|_g^{\beta}
dm(z)$ now implies that $m\left(H(A_i)\right) = \int_{A_i}
\left|H'(z)\right|_g^{\beta} dm(z)$. Summing over $i$ we find that
$$
m(H(A)) = m(H(A \backslash \mathcal C)) = \int_{A \backslash \mathcal C}
\left|H'(z)\right|_g^{\beta} \ dm(z) = \int_A 
\left|H'(z)\right|_g^{\beta} \ dm(z).
$$ 
 
b) Since $m$ is $\beta$-conformal with no atoms at points in $\mathcal C$
it follows that 
\begin{equation}\label{more}
m\left( \bigcup_{j \in \mathbb N} H^{-j}(\mathcal C)\right) = 0 .
\end{equation} 
Let $\eta : U \to V$ be a local transfer for $H|_X$ and let $n,l \in
  \mathbb N$ be such that $H^n(z) = H^l(\eta(z))$ for all $z \in
  U$. To establish the equality (\ref{sconformal}) we may assume that
  $U = X \cap U_0$ where $U_0 \subseteq S$ is open and relatively
  compact in $S$. Set $E = U \cap \bigcup_{0 \leq j \leq n}
  H^{-j}(\mathcal C)$ which is then a finite set. Let $W$ be an open subset
  of $U \backslash E$ such that $H^n|_W$ is injective. Then $H^l$ is
  injective on $\eta(W)$ and $\eta = H^{-l} \circ H^n$ where $H^{-l}$
  denotes the inverse of $H^l : \eta(W) \to H^n(W)$. It follows
  therefore from Lemma \ref{n-mlemma4} that  
\begin{equation}\label{equal}
m(\eta( W)) = \int_{W}
\left|\eta'(x)\right|_g^{\beta} \ d m(x).
\end{equation}
The same equality holds, for the same reason, for any open subset of
$W$ and therefore the regularity of the Borel measures on $W$ given by $B \mapsto m(\eta(B))$
and $B \mapsto \int_{B}
\left|\eta'(x)\right|_g^{\beta} \ d m(x)$ shows that
(\ref{equal}) holds when $W$ is substituted by any Borel subset of
$W$. Since $U\backslash E$ is the union of a countable partition of Borel sets each
of which is a subset of an open set on which $H^n$ is injective, we
conclude that (\ref{equal}) holds when $W$ is replaced by $U
\backslash E$. Since $\eta(E)$ and $E$ are both subsets of $\bigcup_{j
  \geq 0} H^{-j}(\mathcal C) $ it follows from (\ref{more}) that $m(V) = \int_{U}
\left|\eta'(x)\right|_g^{\beta} \ dm(x)$.
\end{proof}

It follows from Lemma \ref{trans=conf4} that except for measures with an
atom at a critical point or a critical value, the conformal measures are
the same as the $(G_X,L)$-conformal measures. This
conclusion can not be improved; there are examples with purely atomic
$(G_X,L)$-conformal measures that are not conformal, and also examples 
with purely atomic conformal measures that are not
$(G_X,L)$-conformal. See Remark \ref{example}. However, it
follows from Lemma \ref{trans=conf4} that the two notions of
conformality agree for non-atomic measures, and this conclusion
suffices for the present purposes.

We turn next to a description of the consistent $G_X$-orbits.

\begin{lemma}\label{pass} Let $K : S \to S$ be a holomorphic map and
  $x \in S$ a point such that $\val(K,x) = j \geq 2$. Let $\kappa \in
  \mathcal H_{K(x)}$ such that $\kappa(K(x)) = K(x)$. There is then a conformal germ $\delta \in
  \mathcal H_x$ such that $\kappa \circ K(z) = K \circ \delta(z)$ for
  all $z$ in a
  neighbourhood of $x$, $\delta(x) =x$ and $\delta'(x)^j = \kappa'(K(x))$.
\end{lemma}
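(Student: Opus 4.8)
The plan is to reduce to a local normal form via B\"ottcher-type coordinates, exactly as in the proof of Lemma~\ref{first}, and then solve the resulting equation for $\delta$ by extracting a $j$-th root of a holomorphic function with nonzero constant term. First I would work in local charts at $x$ and at $K(x)$ so that we may assume $S=\Delta$, $x=0$ and $K(0)=0$. Since $\val(K,0)=j\ge 2$, B\"ottcher's theorem (Theorem 9.1 in \cite{Mi}) supplies a conformal germ $\mu\in\mathcal H_0$ with $\mu(0)=0$ and $K(\mu(z))=\mu(z^j)$ for $z$ near $0$ (after the harmless normalization $K(0)=0$). Conjugating by $\mu$ turns the problem of finding $\delta\in\mathcal H_0$ with $\kappa\circ K=K\circ\delta$, $\delta(0)=0$, into the problem of finding $\sigma=\mu^{-1}\circ\delta\circ\mu\in\mathcal H_0$ with $\sigma(0)=0$ and $\sigma(z)^j=(\mu^{-1}\circ\kappa\circ\mu)(z^j)$ for $z$ near $0$.

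Next I would analyze the right-hand side. Write $\lambda=\mu^{-1}\circ\kappa\circ\mu\in\mathcal H_0$; this is a conformal germ fixing $0$, so $\lambda(w)=w\sum_{n\ge 0}c_nw^n$ with $c_0=\lambda'(0)\ne 0$. Substituting $w=z^j$ gives $\lambda(z^j)=z^j\sum_{n\ge 0}c_nz^{jn}$, whose power series has nonzero constant coefficient $c_0$ after factoring out $z^j$. Using a holomorphic branch of the logarithm near $c_0$, I obtain a holomorphic function $\sigma_0$ near $0$ with $\sigma_0(z)^j=\sum_{n\ge 0}c_nz^{jn}$ and $\sigma_0(0)^j=c_0$; set $\sigma(z)=z\sigma_0(z)$, so $\sigma'(0)=\sigma_0(0)\ne 0$, whence $\sigma\in\mathcal H_0$, $\sigma(0)=0$, and $\sigma(z)^j=\lambda(z^j)$ as required. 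Transporting back through $\mu$ gives the desired $\delta\in\mathcal H_x$ with $\delta(x)=x$ and $\kappa\circ K=K\circ\delta$ near $x$.

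Finally I would read off the derivative relation. From $\delta(x)^j$—more precisely, differentiating $\kappa\circ K(z)=K\circ\delta(z)$ repeatedly, or simply using $\val(K,x)=j$ together with the identity $\sigma(z)^j=\lambda(z^j)$: comparing lowest-order terms gives $\sigma'(0)^j=\lambda'(0)=\kappa'(K(0))$ after unwinding the conjugation, since $\mu'$ contributes reciprocally on the two sides and cancels. In the original coordinates this says $\delta'(x)^j=\kappa'(K(x))$. The only mildly delicate point is bookkeeping the chain-rule contributions of the chart maps and of $\mu$ to verify that they cancel exactly, leaving precisely $\delta'(x)^j=\kappa'(K(x))$; but this is the same kind of computation already carried out in Lemma~\ref{first} (where the analogous equivalence class of a germ is pinned down by its derivative), so I do not expect it to present a genuine obstacle. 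The substantive input is B\"ottcher's theorem and the root-extraction trick, both of which are already in hand from the proof of Lemma~\ref{first}.
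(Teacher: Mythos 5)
Your proposal is correct and follows essentially the same route as the paper: reduce by charts to the case $x=0$, $K(0)=0$, use B\"ottcher's theorem to conjugate $K$ to $z\mapsto z^j$, solve $\sigma(z)^j=\lambda(z^j)$ by extracting a $j$-th root via a holomorphic logarithm of the unit factor, and transport back, reading off $\delta'(x)^j=\kappa'(K(x))$ from the cancellation of the conjugating derivatives. The only difference is cosmetic — you conjugate with the B\"ottcher map $\mu$ satisfying $K\circ\mu(z)=\mu(z^j)$, while the paper uses its inverse $\varphi$ with $\varphi\circ K\circ\varphi^{-1}(z)=z^j$ — and the chart bookkeeping you defer is exactly the paper's final paragraph, justified by the conjugation invariance of the multiplier.
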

\begin{proof} We prove this first when $S$ is an open neighbourhood of
  $0$ in $\mathbb C$, $x = 0$ and $K(0) = 0$. In this case it follows
  from B\"ottcher's theorem that there is conformal germ $\varphi \in
  \mathcal H_0$ such that $\varphi(0) = 0$ and $\varphi \circ K \circ
  \varphi^{-1}(z) = z^j$ in a neighbourhood of $0$. The Taylor series
  of $\varphi \circ \kappa \circ \varphi^{-1}(z^j)$ at $0$ has the
  form $z^j\left( \sum_{n=0}^{\infty}b_nz^n\right)$ with $b_0 \neq
  0$. Using a holomorphic logarithm near $b_0$ we get a holomorphic
  map $\delta_1$ such that $\delta_1(z)^j =
  \sum_{n=0}^{\infty}b_nz^n$ in a neighbourhood of $0$. Set $\delta_2(z) = z\delta_1(z)$. Then
  $\delta_2 \in \mathcal H_0$, $\delta_2(z)^j = \varphi \circ \kappa
  \circ \varphi^{-1}(z^j)$ in a neighbourhood of $0$ and $\delta_2'(0)^j = \delta_1(0)^j =
  b_0 =
  \left(\varphi \circ \kappa \circ \varphi^{-1}\right)'(0) =
  \kappa'(0)$. Set $\delta = \varphi^{-1}\circ \delta_2 \circ \varphi$.  

The general case: There are local charts $\varphi, \psi$ defined in an open
neighbourhood of $x$ and $K(x)$, respectively, such that $\psi(K(x)) =
0$ and $\varphi(x) = 0$. It follows from the case dealt with above
that there is a conformal germ $\delta_3$ at
$0$ such that $\delta_3'(0)^j = \left(\psi \circ \kappa \circ
  \psi^{-1}\right)'(0) = \kappa'(K(x))$ and $\psi \circ \kappa \circ \psi^{-1} \circ \psi \circ
K \circ \varphi^{-1}(z) = \psi \circ K \circ \varphi^{-1}\circ
\delta_3(z)$ for all $z$ in a neighbourhood of $0$. Then $\delta =
\varphi^{-1} \circ \delta_3 \circ \varphi$ will have the required
properties.

\end{proof}

Recall that the orbit of a periodic point $y$ of period $p$ is \emph{neutral} when
$\left|(H^p)'(y)\right|_g =1$ and
\emph{critical} when $(H^p)'(y) = 0$.

\begin{lemma}\label{consistent}
Let $x \in X$. The $G_X$-orbit $G_Xx$ is consistent if and only if $x$ is
either not pre-periodic or is pre-periodic to a critical or neutral
periodic orbit. 

\end{lemma}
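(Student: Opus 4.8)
The plan is to read off the structure of the isotropy group $(G_X)^x_x$ from Proposition~\ref{isotropy} and, in each of the cases occurring there, to evaluate the homomorphism $L$ of \eqref{Ldef} on its generators. Two elementary observations will be used throughout. First, whenever a conformal germ $\eta$ satisfies $\eta(x)=x$ the metric norm $\left|\eta'(x)\right|_g$ coincides with the ordinary modulus $\left|\eta'(x)\right|$, since the metric rescales the source and the target tangent space at $x$ by the same factor; in particular, if $[\eta]_x$ has finite order in $\mathcal H_x$ then $\eta'(x)$ is a root of unity, so $\left|\eta'(x)\right|=1$ and $L[x,0,\eta,x]=0$. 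Second, the metric norm is multiplicative under composition, $\left|(f\circ h)'(z)\right|_g = \left|f'(h(z))\right|_g\left|h'(z)\right|_g$, and $\left|(\id)'(z)\right|_g = 1$; hence for a local inverse branch $H^{-m}$ with $H^{-m}(w)=z$ one has $\left|(H^{-m})'(w)\right|_g = \left|(H^m)'(z)\right|_g^{-1}$.

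I would first dispose of the case where $x$ is not pre-periodic, i.e.\ cases a) and c) of Proposition~\ref{isotropy}: there \eqref{lur} shows that every element of $(G_X)^x_x$ has the form $[x,0,\eta,x]$ with $[\eta]_x$ of finite order, so by the first observation $L$ vanishes on $(G_X)^x_x$ and $G_Xx$ is consistent, as claimed. Next I would assume $x$ pre-periodic, pick $n$ large enough that $y:=H^n(x)$ lies on the limit cycle, and let $p$ be its period. If the cycle of $y$ is critical --- equivalently $H^j(x)\in\mathcal C$ for some $j\geq n$, i.e.\ case d1) --- then, as shown in the proof of Proposition~\ref{isotropy}, every element of $(G_X)^x_x$ still has the form $[x,0,\eta,x]$ with $[\eta]_x$ of finite order, so again $L\equiv 0$ on $(G_X)^x_x$ and $G_Xx$ is consistent; this agrees with the statement, since then $x$ is pre-periodic to a critical orbit.

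It remains to treat the case where $x$ is pre-periodic while the cycle of $y$ is not critical, i.e.\ case b) or case d2). In both cases the proof of Proposition~\ref{isotropy} exhibits a conformal germ $\zeta$ fixing $x$ with $[x,p,\zeta,x]\in(G_X)^x_x$, such that every element of $(G_X)^x_x$ is a power of $[x,p,\zeta,x]$ times some $[x,0,\mu,x]$ with $[\mu]_x$ of finite order; one may take $\zeta = H^{-n}\circ H^{p+n}$ in case b) and $\zeta = \left(H^{-n}\circ (H^p)^{-1}\circ H^n\right)^{-1}$ in case d2), where $H^{-n}$ is the branch with $H^{-n}(y)=x$ and $(H^p)^{-1}$ the branch of $H^{-p}$ fixing $y$ (which exists since $\val(H^p,y)=1$ in case d2)). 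Applying the two observations, the factors $\left|(H^n)'(x)\right|_g$ produced by $H^n$ and by $H^{-n}$ cancel, and in case d2) so do the factors produced by $H^p$ and by $(H^p)^{-1}$, leaving $\left|\zeta'(x)\right| = \left|(H^p)'(y)\right|_g$. Since the finite-order germs contribute $0$ to $L$, we get $L\bigl((G_X)^x_x\bigr)=\mathbb Z\cdot\log\left|(H^p)'(y)\right|_g$. Hence $G_Xx$ is consistent if and only if $\left|(H^p)'(y)\right|_g=1$, i.e.\ if and only if the cycle of $y$ is neutral, which settles the remaining cases. The step requiring the most care is this last derivative computation: one must correctly identify the inverse branches of $H^n$ and of $H^p$ that occur and check that all the metric-dependent intermediate factors cancel, leaving the quantity $\left|(H^p)'(y)\right|_g$ --- which, since $H^p$ fixes $y$, is in fact independent of the metric and, for a non-critical cycle, of the choice of the point $y$ on it.
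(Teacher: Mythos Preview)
Your argument is correct and close to the paper's in cases a), c), d1) and b), but it breaks down in case d2). There you write $\zeta = \bigl(H^{-n}\circ (H^p)^{-1}\circ H^n\bigr)^{-1}$ ``where $H^{-n}$ is the branch with $H^{-n}(y)=x$'' and then cancel $\left|(H^n)'(x)\right|_g$ against $\left|(H^{-n})'(y)\right|_g$. But in case d2) we have $d=\val(H^n,x)\geq 2$, so $H^n$ is \emph{not} locally injective at $x$: no such branch $H^{-n}$ exists, $(H^n)'(x)=0$, and your cancellation is a $0\cdot\infty$ indeterminate. The germ $\xi$ produced in the proof of Proposition~\ref{isotropy} in case d2) does \emph{not} arise by composing local inverses of $H^n$; it is obtained from Lemma~\ref{first}, and there is no chain-rule identity relating $\xi'(x)$ to $(H^n)'(x)$.

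The paper fills exactly this gap with Lemma~\ref{pass}: in B\"ottcher coordinates one constructs, for any germ $\kappa$ fixing $y=H^n(x)$, a germ $\delta$ fixing $x$ with $H^n\circ\delta=\kappa\circ H^n$ and $\delta'(x)^d=\kappa'(y)$. Applying this with $\kappa=H^p$ (or rather to the germ relation $H^{kp+m}=H^m\circ\eta$) gives $\left|\eta'(x)\right|^d=\left|(H^p)'(y)\right|_g$, not $\left|\eta'(x)\right|=\left|(H^p)'(y)\right|_g$ as you claim. The conclusion ``$L\equiv 0$ iff the cycle is neutral'' is unaffected by the missing $d$-th root, but your derivation of the value of $\left|\zeta'(x)\right|$ in case d2) is invalid and needs Lemma~\ref{pass} (i.e.\ B\"ottcher's theorem) to be repaired.
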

\begin{proof} We must show that $L\left(\left(G_X\right)^x_x\right) \neq \{0\}$ if
and only if $x$ is pre-periodic to a periodic orbit which is neither
critical nor neutral. It follows from Proposition \ref{isotropy} that
  $\left(G_X\right)^x_x$ is a torsion group unless $x$ is pre-periodic to a non-critical periodic orbit. So what remains is to
  prove that when $x$ is pre-periodic to a non-critical periodic
  orbit, $L\left(\left(G_X\right)^x_x\right) = \{0\}$ if and only if
  that periodic orbit is neutral. Assume therefore that $x$ is
  pre-periodic to non-critical periodic orbit. Let $n \in \mathbb
  N$ be the least natural number such that $H^n(x)$ is periodic, say
  of period $p$, and
  assume that
  $\val(H,H^j(x)) = 1$ for all $j \geq n$. Let $\eta$ be a local
  transfer such that $\eta(x) = x$ and $H^{kp +m}(z) = H^m(\eta(z))$ in a neighborhood of $x$
  for some $k \in \mathbb N$ and some $m \geq n$. Then
  $\left(H^{kp}\right)'\left(H^m(x)\right)\left(H^m\right)'(x) =
  (H^m)'(\eta(x))\eta'(x)$ which implies that $\left|\eta'(x)\right|_g =
   \left|\left(H^{kp}\right)'\left(H^m(x)\right)\right|_g$ when
  $\left(H^m\right)'(x) \neq 0$. So when $x$ is not pre-critical we
  conclude that $L\left(\left(G_X\right)^x_x\right) = \{0\}$ if
  and only if $x$ is pre-neutral. 

It remains to consider the case when $\val(H^n,x) \geq 2$. It follows
from Lemma \ref{pass} that there is a conformal germ $\delta \in
\mathcal H_x$ such that $H^{kp + m}(z) = H^m\left(\delta(z)\right)$ in
a neighbourhood of $x$, $\delta(x) = x$ and $\delta'(x)^j =
\left(H^{kp}\right)'\left(H^m(x)\right)$ where $j = \val(H^m,x) =
\val(H^n,x)$. Then $H^m \circ \eta^{-1} \circ \delta(z) = H^m(z)$ for all
$z$ in a neighbourhood of $x$ and it follows from Lemma
\ref{cyclicgroup} that $\left|\left(\eta^{-1} \circ
    \delta\right)'(x)\right|_g = 1$. Hence $\left|\eta'(x)\right|_g^j =
\left|\delta'(x)\right|_g^j =
\left|(H^{kp})'\left(H^m(x)\right)\right|_g$. It follows that
$\left|\eta'(x)\right|_g = 1$ if and only if $x$ is pre-neutral.   
\end{proof}

 We summarise now the results on the KMS states for the conformal
 action which we obtain by specialising Theorem \ref{decomp!} to $G_X$. For $x \in X$ let $G_Xx$ be the $G_X$-orbit of $x$, i.e.
$G_Xx = \left\{ r(\xi) : \ \xi \in \left(G_X\right)_x \right\}$. 
Assume that $x$ is either not pre-periodic
  or is pre-periodic to a critical or
  neutral orbit. Then $G_Xx$ is consistent in the sense of
  Section \ref{A} by Lemma \ref{consistent}. For each $z \in G_Xx$ we
choose a local transfer $\eta$ such that $\eta(x) = z$ and set
$l_x(z) = \left|\eta'(x)\right|_g$. As in Section \ref{A} we will say that $G_Xx$ is \emph{$\beta$-summable} when $\sum_{z \in
  G_Xx} l_x(z)^{\beta} < \infty$. Assume that this is the case and let
$\varphi$ be a state of $C^*\left(G^x_x\right)$. For each $z \in \mathcal
O$ choose an element $\xi_z \in G_X$ such that $r(\xi_z) = z$ and
$s\left(\xi_z\right) = x$. Then $\xi_z
\left(G_X\right)_x^x\xi_z^{-1} =
\left(G_X\right)^z_z$. Define a state $\varphi_z$ on
$C^*\left(\left(G_X\right)^z_z\right)$ such that
$\varphi_z(f) = \varphi\left(f^{\xi_z}\right)$, 
where $f^{\xi_z}(y) = f\left(\xi_zy\xi_z^{-1}\right)$. As in Section
\ref{A} we define a state $\omega_{\mathcal O}^{\varphi}$ on $C^*_r\left(G_X\right)$ such that
$$
\omega_{\mathcal O}^{\varphi}(f) = \left(\sum_{z \in \mathcal O}
  l_x(z)^{\beta}\right)^{-1}  \sum_{z \in \mathcal O} 
\varphi_z\left(f|_{\left(G_X\right)^z_z}\right)\left|l_x(z)\right|_g^{\beta}
$$ 
for all $f \in C_c(G_X)$. 

We can now combine Lemma \ref{trans=conf4} and Lemma \ref{consistent}
with Theorem \ref{decomp!} to get the following.

\begin{thm}\label{decompGX} Let $m$ be a non-atomic $\beta$-conformal probability
  measure and let $x_i, i \in I$, be a finite or countably infinite
  collection of points in $X$ each of which is either not pre-periodic
  or is pre-periodic to a critical or
  neutral orbit. Let $\mathcal O_i = G_Xx_i$ be the $G_X$-orbit of
  $x_i$ and assume that they are all $\beta$-summable. Finally,
let $t_0,t_i \in ]0,1], i \in I$, be numbers such that $t_0 + \sum_{i
  \in I} t_i = 1$.

There is then a $\beta$-KMS state $\omega$ for the conformal action on $C^*_r(G_X)$
such that
\begin{equation}\label{decmppGX}
\omega(a) = t_0 \int_{X} P(a) \ dm + \sum_{i \in I} t_i \omega_{\mathcal
  O_i}^{\varphi_i}(a)
\end{equation}
for all $a \in C^*_r\left(G_X\right)$. Conversely, any $\beta$-KMS state
$\omega$ for the conformal action on $C^*_r(G)$ admits a unique decomposition of the form (\ref{decmppGX}).
\end{thm}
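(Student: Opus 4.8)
The plan is to obtain \refthm{decompGX} by specialising \refthm{decomp!} to the groupoid $G_X$ and the homomorphism $L$ of Section~\ref{confac}, after checking that the abstract hypotheses of \refthm{decomp!} hold for $G_X$ and translating the three kinds of data that appear in its decomposition into the language of the holomorphic map $H$. First I would record that $G_X$ is a second countable \'etale locally compact Hausdorff groupoid by \refprop{etale!!}, that $L$ is a continuous real homomorphism with $\sigma^L=\gamma$ by construction, and that $\beta\neq 0$ throughout as in Section~\ref{A}. The two standing assumptions a) and b) preceding \reflemma{decomp!!} must then be verified for $G_X$: assumption a), that every isotropy group $(G_X)^x_x$ is abelian, is immediate from \refprop{isotropy}, since each group listed there ($0$, $\mathbb Z$, a subgroup of $\mathbb Q/\mathbb Z$, or $\mathbb Z\oplus\mathbb Z_d$) is abelian; for assumption b), part a) of \refprop{isotropy} shows $(G_X)^x_x$ is trivial unless $x$ is pre-critical or pre-periodic, and these $x$ form an at most countable set because $\mathcal C$ is discrete in $S$, the $H$-preimages of a point are discrete, and the fixed-point set of each iterate $H^n$ is discrete.

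With a) and b) in hand, \refthm{decomp!} applies directly to $\sigma^L=\gamma$ on $C^*_r(G_X)$ and gives: every $\beta$-KMS state $\omega$ for the conformal action decomposes uniquely as
\[
\omega(a) = t_0 \int_X P(a) \ dm + \sum_{i \in I} t_i\, \omega^{\varphi_i}_{\mathcal O_i}(a),
\]
with $m$ a non-atomic $(G_X,L)$-conformal probability measure of exponent $\beta$, the $\mathcal O_i=G_Xx_i$ a finite or countable family of consistent $\beta$-summable $G_X$-orbits, $\varphi_i$ a state on $C^*((G_X)^{x_i}_{x_i})$, and $t_0,t_i \in \,]0,1]$ with $t_0+\sum_i t_i=1$; conversely any such data yields a $\beta$-KMS state.

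It then remains to match these ingredients with those in the statement, and this is where the lemmas of Section~\ref{KMSconf7} do their job. A non-atomic measure is automatically null on the countable sets $\mathcal C$ and $H(\mathcal C)$, so \reflemma{trans=conf4} gives that a non-atomic measure is $(G_X,L)$-conformal with exponent $\beta$ if and only if it is $\beta$-conformal; hence the non-atomic $(G_X,L)$-conformal probability measures are exactly the non-atomic $\beta$-conformal ones. By \reflemma{consistent} a $G_X$-orbit $G_Xx$ is consistent precisely when $x$ is not pre-periodic, or is pre-periodic to a critical or neutral orbit, which is the condition imposed on the $x_i$. Finally, unwinding the definition of $L$ one checks that for $c=L$ the abstract quantity $l_x(z)=e^{-c(g)}$ of Section~\ref{A} coincides with $|\eta'(x)|_g$ for a local transfer $\eta$ with $\eta(x)=z$, so that $\beta$-summability and the states $\omega^{\varphi}_{\mathcal O}$ of Section~\ref{A} transcribe verbatim into those defined just before the statement. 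Substituting these identifications into the decomposition above yields \refthm{decompGX}.

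Since \refthm{decomp!}, \refprop{isotropy}, \reflemma{trans=conf4} and \reflemma{consistent} already carry all the real content, the argument is essentially bookkeeping, and the only steps needing care are the two-sided use of \reflemma{trans=conf4} --- where one must observe that non-atomicity kills the mass on $\mathcal C$ \emph{and} on $H(\mathcal C)$, so that neither hypothesis of that lemma is lost --- and the derivative bookkeeping (involving inverse branches of the iterates $H^n$) that has to line up exactly between the abstract cocycle $c=L$ and the concrete $|\eta'(x)|_g$ appearing in $l_x$ and in $\omega^{\varphi}_{\mathcal O}$.
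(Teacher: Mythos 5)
Your proposal is correct and follows essentially the same route as the paper, which obtains \refthm{decompGX} precisely by combining \reflemma{trans=conf4} and \reflemma{consistent} with \refthm{decomp!}; your identifications of $l_x(z)$ with $|\eta'(x)|_g$ and of non-atomic $(G_X,L)$-conformal measures with non-atomic $\beta$-conformal ones are exactly the intended translation. The only addition is your explicit verification of assumptions a) and b) via \refprop{isotropy} and the countability of pre-critical and pre-periodic points, which the paper leaves implicit.
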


\begin{remark}\label{flat} We can now
  show by example that the structure of the KMS states
  of the conformal action depends on the chosen metric when
  $X$ is not compact. 
  Let $S = \mathbb C, H(z) = e^z$ and $X = \mathbb C \backslash
  \{0\}$. When the conformal action on $C^*_r\left(G_X\right)$ is
defined by use of the usual 'flat' metric $\left| \cdot \right|$ there are no $\beta$-KMS
states for the conformal action. To see this note that there are no
critical points in this case so that $(G_X,L)$-conformality is the
same as ordinary conformality by Lemma \ref{trans=conf4}. By Theorem
\ref{decompGX} it suffices therefore to show that for any $\beta \neq 0$ there
can be no Borel probability measure $m$ on $\mathbb C \backslash
\{0\}$ such that
\begin{equation}\label{flaeq}
m(e^A) = \int_A \left|e^z\right|^{\beta} \ dm(z)
\end{equation}
for every Borel subset $A$ of $\mathbb C \backslash \{0\}$ on which
$e^z$ is injective. To see that this is so, assume that $m$ is such a
measure. Let $w \in \mathbb C \backslash \{0\}$ and choose
an element $z \in \mathbb C \backslash \{0\}$ such that $e^z =
w$. There is an open neighborhood $U$ of $z$ of diameter $<  \pi$ such that $e^z$ is
injective on $U$. Then $\left(U + 2\pi n i\right) \cap \left(U + 2 \pi
  k i\right) = \emptyset$ when $n \neq k$ in $\mathbb N$ and (\ref{flaeq}) yields the
conclusion that 
$$
N m(e^U) = \sum_{k=1}^N \int_{U + 2\pi ki} \left|e^z\right|^{\beta} \
dm(z) \leq  \sup_{z \in U} \left|e^z\right|^{\beta}  
$$
for all $N\in \mathbb N$, which implies that $m(e^U) = 0$. Thus every $w \in \mathbb
C \backslash \{0\}$ has an open
neighborhood of zero $m$-measure. This is impossible by regularity of $m$.

Consider instead the metric $g(z) = \sqrt{f(z)} |{\cdot}|$ where $f:  \mathbb C \backslash
\{0\} \to ]0,\infty[$ is any continuous function such that
$\int_{\mathbb C \backslash \{0\}} f(z) \ dz = 1$ when $dz$ is the
Lebesgue measure. Using this
metric the requirement of $\beta$-conformality for a Borel probability
measure $m$ on $\mathbb C \backslash \{0\}$ is that
 \begin{equation}\label{flaeq2}
m(e^A) = \int_A \left|e^z\right|^{\beta} f(e^z)^{\frac{\beta}{2}} f(z)^{-\frac{\beta}{2}} \ dm(z)
\end{equation}
for every Borel subset $A$ of $\mathbb C \backslash \{0\}$ on which
$e^z$ is injective. It is easy to check that (\ref{flaeq2}) is satisfied
when $\beta =2$ and  
$m = f(z) \ dz$. 
Thus the conformal action, defined with the metric $g(z) =\sqrt{f(z)}|{\cdot}|$ has a
non-atomic $2$-KMS state.
\end{remark}

\section{Phase transition with spontaneous symmetry breaking from quadratic maps}

We now specialise to the case where $S$ is the Riemann sphere
$\overline{\mathbb C}$ and $H$ is a rational map on $\overline{\mathbb
  C}$ of degree at least $2$. We change
the notation accordingly by setting $H = R$. In this case there are three obvious
candidates for the set $X$; namely, $\overline{\mathbb C}$, the Fatou set
$F_R$ and the Julia set $J_R$. In this section we consider some
examples with $X = J_R$ where we can give a complete description of the
KMS states for the conformal action on $C^*_r\left(G_{J_R}\right)$
thanks to results of Graczyk
and Smirnov, \cite{GS1}, \cite{GS2}.   

Up to conjugation by a conformal automorphism of $\overline{\mathbb
  C}$ any polynomial of degree $2$ on the Riemann sphere has the form
$$
R_c(z) = z^2  +c 
$$
for some $c \in \mathbb C$. Since $J_{R_c} \subseteq \mathbb C$ we can
assume that the metric defining the conformal action is the usual
'flat' metric $\left|\cdot\right|$. We say that $R_c$ satisfies \emph{the
  Collet-Eckmann condition} when there is a $C > 0$ and a $\lambda >
1$ such that
\begin{equation}\label{CE}
\left|\left(R_c^n\right)'(c)\right| \geq C \lambda^n
\end{equation}
for all $n \in \mathbb N$. For example $z^2-2$ satisfies the
Collet-Eckmann condition because $R_{-2}(c) = R_{-2}(-2) = 2$ is a repelling
fixed point for $R_{-2}$. It was shown by Benedicks and Carleson in \cite{BC} that
the set of the $a$'s in $(0,2]$ for which the polynomial $R_{-a}$
satisfies the Collet-Eckmann condition is of positive Lebesgue
measure.

\begin{lemma}\label{summ} Assume that $R_c$ satisfies the
  Collet-Eckmann condition and let
  $HD\left(J_{R_c}\right)$ be the Hausdorff dimension of the Julia set
  $J_{R_c}$. 
\begin{enumerate}
\item[a)] When the critical point $0$ is
  pre-periodic there are no $\beta$-summable $G_{J_{R_c}}$-orbits for
  any $\beta  > 0$. 
\item[b)] When $0$ is not pre-periodic there are no $\beta$-summable
  $G_{J_{R_c}}$-orbits when $0 < \beta \leq HD\left(J_{R_c}\right)$, and when
  $\beta > HD\left(J_{R_c}\right)$, 
$$
G_{J_{R_c}}0 = \bigcup_{n \geq 0} R_c^{-n}(0)
$$
is the only $\beta$-summable $G_{J_{R_c}}$-orbit.
\end{enumerate}
\end{lemma}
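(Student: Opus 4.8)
The plan is to reduce the statement to the convergence or divergence of the classical Poincar\'e series of $R_c$ and then to quote the estimates of Graczyk and Smirnov. First I would determine the consistent $G_{J_{R_c}}$-orbits. Since $R_c$ satisfies the Collet--Eckmann condition all its periodic cycles are repelling (an attracting, parabolic, Siegel or Cremer cycle would prevent $|(R_c^n)'(c)|$ from growing exponentially, contradicting \eqref{CE}), and since $0 \in J_{R_c}$ the critical point is not periodic; by Lemma \ref{consistent} the consistent $G_{J_{R_c}}$-orbits are therefore exactly the grand orbits $G_{J_{R_c}}x$ of the points $x$ that are not pre-periodic. For such an $x$ and a point $z$ in its orbit, with $R_c^m(x) = R_c^n(z)$ realised by a local transfer $\eta$, Lemma \ref{first} identifies $\eta$ up to equivalence with a branch of $(R_c^n)^{-1}\circ R_c^m$, so that $l_x(z) = |(R_c^m)'(x)|\,/\,|(R_c^n)'(z)|$ whenever these maps are locally injective at $x$ and $z$; consistency makes this value independent of the representation. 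Since the only critical point is $0$ and $0$ is not periodic, matching of valencies forces each $z$ in the orbit of $0$ to have $0$ in its forward orbit, so that $G_{J_{R_c}}0 = \bigcup_{n\geq 0} R_c^{-n}(0)$, a disjoint union, with $l_0(z) = |(R_c^n)'(z)|^{-1}$ for $z \in R_c^{-n}(0)$; hence
\[
\sum_{z \in G_{J_{R_c}}0} l_0(z)^{\beta} = \sum_{n\geq 0}\ \sum_{z\in R_c^{-n}(0)} \bigl|(R_c^n)'(z)\bigr|^{-\beta},
\]
the Poincar\'e series of the critical point. For $\beta > HD(J_{R_c})$ the convergence of this series is precisely the summability of the Poincar\'e series of the critical point established in \cite{GS2}, so $G_{J_{R_c}}0$ is then $\beta$-summable. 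Note finally that every pre-critical point which is not pre-periodic lies in $G_{J_{R_c}}0$, so when $0$ is not pre-periodic this is the only consistent orbit containing a pre-critical point, whereas when $0$ is pre-periodic every pre-critical point is pre-periodic and, by Lemma \ref{consistent}, $G_{J_{R_c}}0$ is not consistent.

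Next I would establish the divergence statements. For $G_{J_{R_c}}0$ when $0 < \beta \leq HD(J_{R_c})$: the Poincar\'e series of the critical point has critical exponent $HD(J_{R_c})$ --- it converges for $\beta > HD(J_{R_c})$ by \cite{GS2} and the conformal measure obtained from it at the critical exponent has exponent $\geq HD(J_{R_c})$ --- so it diverges for $\beta < HD(J_{R_c})$, while for $\beta = HD(J_{R_c})$ a $\beta$-summable $G_{J_{R_c}}0$ would produce an atomic $HD(J_{R_c})$-conformal measure, contradicting the uniqueness and non-atomicity of that measure; all of this rests on \cite{GS1}. For a consistent orbit $G_{J_{R_c}}x$ with $x$ neither pre-periodic nor pre-critical I would use a forward-orbit lower bound: for each $M \geq 1$ the point $R_c^M(x)$ has two distinct $R_c$-preimages, $R_c^{M-1}(x)$ and $-R_c^{M-1}(x)$, the second of which lies in $G_{J_{R_c}}x$ with
\[
l_x\bigl(-R_c^{M-1}(x)\bigr) = \frac{|(R_c^M)'(x)|}{\bigl|2R_c^{M-1}(x)\bigr|} \ \geq\ (2\diam J_{R_c})^{-1}\,\bigl|(R_c^M)'(x)\bigr| ,
\]
and the points $-R_c^{M-1}(x)$, $M\geq 1$, are pairwise distinct because the forward orbit of $x$ is injective; hence
\[
\sum_{z \in G_{J_{R_c}}x} l_x(z)^{\beta} \ \geq\ (2\diam J_{R_c})^{-\beta}\sum_{M\geq 1} \bigl|(R_c^M)'(x)\bigr|^{\beta},
\]
and this diverges for every $\beta > 0$ because for a Collet--Eckmann map the forward orbit of a point of $J_{R_c}$ that is not pre-critical does not converge to the critical point, so $|(R_c^M)'(x)|$ does not tend to $0$. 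In case a) every consistent orbit is of this last type, and in case b) every consistent orbit is either $G_{J_{R_c}}0$ or of this last type, so the two divergence statements finish the proof.

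The step I expect to be the main obstacle is precisely this last divergence, namely that the forward orbit of a non-pre-critical, non-pre-periodic point of $J_{R_c}$ is never ``summably contracting'', $\sum_M |(R_c^M)'(x)|^{\beta} = \infty$. This is where the Collet--Eckmann hypothesis is genuinely used --- it is the slow recurrence of orbits to the critical point that keeps $|(R_c^M)'(x)|$ from decaying --- and it is also what singles out $G_{J_{R_c}}0$: its forward images hit the critical point, where the derivative vanishes, so the forward-orbit bound degenerates and only the backward (Poincar\'e) series survives, summable exactly for $\beta > HD(J_{R_c})$ by \cite{GS2}. Everything else is routine bookkeeping with the groupoid and the chain rule, using Lemmas \ref{consistent} and \ref{first} together with the quoted results of \cite{GS1} and \cite{GS2}.
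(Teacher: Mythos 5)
The main step of your argument --- ruling out $\beta$-summability for the orbit of a point $x$ that is neither pre-critical nor pre-periodic --- has a genuine gap. Your lower bound $\sum_{z\in G_{J_{R_c}}x} l_x(z)^{\beta} \geq (2\diam J_{R_c})^{-\beta}\sum_{M}\left|\left(R_c^M\right)'(x)\right|^{\beta}$ is fine, but the justification that this diverges is not: you argue that the forward orbit of $x$ does not converge to the critical point and conclude that $\left|\left(R_c^M\right)'(x)\right|$ does not tend to $0$. That implication is false. Since $0$ is not a fixed point of $R_c$, \emph{no} orbit converges to it, yet the product $\left|\left(R_c^M\right)'(x)\right| = \prod_{j<M} 2\left|R_c^j(x)\right|$ can still tend to $0$ if the orbit makes increasingly close returns to $0$; whether this is excluded for every non-pre-critical point of a Collet--Eckmann Julia set is exactly the hard, quantitative content you would need (some binding-period/shadowing argument exploiting (\ref{CE})), and it is not supplied. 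In other words, the step you yourself flag as ``the main obstacle'' is left resting on a non sequitur. A secondary soft spot: for $0<\beta<HD(J_{R_c})$ you appeal to a ``critical exponent'' of the Poincar\'e series, but convergence of $\sum_n\sum_{z\in R_c^{-n}(0)}\left|\left(R_c^n\right)'(z)\right|^{-\beta}$ need not be monotone in $\beta$; the clean argument is the one you sketch for $\beta = HD(J_{R_c})$, namely that summability would yield a purely atomic $\beta$-conformal measure, which Corollary 5.1 of \cite{GS2} forbids for $\beta \leq HD(J_{R_c})$.

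The paper avoids your obstacle entirely by running the argument in the opposite direction: if $\mathcal O$ is any $\beta$-summable orbit, the formula (\ref{mathcalo}) produces a purely atomic $\left(G_{J_{R_c}},L\right)$-conformal measure $m_{\mathcal O}$; the Collet--Eckmann growth of $\left|\left(R_c^n\right)'(c)\right|$ shows $c\notin\mathcal O$, so by Lemma \ref{trans=conf4} the measure $m_{\mathcal O}$ is $\beta$-conformal in Sullivan's sense, and then item 3 of Corollary 5.1 in \cite{GS2} (atomic conformal measures are carried by the backward orbit of the critical point) forces $\mathcal O = \bigcup_{n\geq 0}R_c^{-n}(0)$; items 1 and 2 give $\beta > HD(J_{R_c})$, and item 3 gives summability in that range, while case a) is disposed of by consistency (Lemma \ref{consistent} plus Lemma 10 of \cite{GS1}), exactly as you do. So the divergence statement you try to prove by hand is precisely what the paper outsources to the classification of atomic conformal measures in \cite{GS2}; to salvage your route you would either have to prove the non-decay of $\left|\left(R_c^M\right)'(x)\right|$ for Collet--Eckmann maps or fall back on that same citation.
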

\begin{proof} Assume that $\mathcal O$ is a $\beta$-summable
  $G_{J_{R_c}}$-orbit, $\beta > 0$, and let $m_{\mathcal O}$ be the
  corresponding purely atomic $\left(G_{J_{R_c}},L\right)$-conformal measure
  (\ref{mathcalo}). Since $\lim_{n \to \infty}  \left|\left(R_c^n\right)'(c)\right|^{\beta} = \infty$ we
  conclude that $c \notin \mathcal O$, and hence that $m_{\mathcal O}$
  is $\beta$-conformal by Lemma \ref{trans=conf4}. It follows therefore from item 3 of Corollary
  5.1 in \cite{GS2} that $\mathcal O =\bigcup_{n \geq 0}
  R_c^{-n}(0)$, i.e this is the only possible $\beta$-summable
  $G_{J_{R_c}}$-orbit when $\beta > 0$.

In case a) $0$ is pre-periodic, and it
  clearly can not be pre-periodic to a critical periodic
  orbit. Furthermore, it follows from Lemma 10 in \cite{GS1} that the
  Collet-Eckmann condition rules out the presence of a periodic
  neutral orbit in $J_{R_c}$.  It follows
  therefore from Lemma \ref{consistent} that $\mathcal O =
  G_{J_{R_c}}0$ is not consistent. There is therefore no
  $\beta$-summable $G_{J_{R_c}}$-orbit when $0$ is pre-periodic.

In case b) it follows from item 1 and 2 of
  Corollary 5.1 in \cite{GS2} that $\beta > HD(J_{R_c})$ since
  $m_{\mathcal O}$ is $\beta$-conformal and purely atomic, and then from
  item 3 of the same corollary that $\mathcal O =\bigcup_{n \geq 0}
  R_c^{-n}(0)$ actually \emph{is} $\beta$-summable when $\beta > HD(J_{R_c})$.
 
\end{proof}

\begin{thm}\label{Acon} Assume that $R_c$ satisfies the Collet-Eckmann
  condition and let
  $HD\left(J_{R_c}\right)$ be the Hausdorff dimension of the Julia set
  $J_{R_c}$. Consider the conformal
  action $\gamma$ on
    $C^*_r\left(G_{J_{R_c}}\right)$. Let $\beta > 0$.
\smallskip

a) When $0$ is pre-periodic for $R_c$ there
    is a $\beta$-KMS state for $\gamma$ if and only if
    $\beta = HD\left(J_{R_c}\right)$. It is given by 
\begin{equation}\label{M}
\omega(a) = \int_{J_{R_c}} P(a) \ dm,
\end{equation}
where $m$ is the $HD\left(J_{R_c}\right)$-conformal measure of
\cite{GS2}.

\smallskip

b) When $0$ is not pre-periodic there are 
\begin{enumerate}
\item[$\bullet$] no $\beta$-KMS states when $0 < \beta <
  HD\left(J_{R_c}\right)$,
\item[$\bullet$] a unique $\beta$-KMS state $\omega$, given by (\ref{M}), when $\beta =
  HD\left(J_{R_c}\right)$, and
\item[$\bullet$] two extremal $\beta$-KMS states when $\beta >
  HD\left(J_{R_c}\right)$. The corresponding measure on $J_{R_c}$ is
  purely atomic and supported on $\bigcup_{n \geq 0}
  R_c^{-n}(0)$. 
\end{enumerate}

\end{thm}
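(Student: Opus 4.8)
The plan is to read the classification off Theorem \ref{decompGX}, which presents every $\beta$-KMS state of the conformal action on $C^*_r(G_{J_{R_c}})$ as a convex combination of a non-atomic $\beta$-conformal probability measure on $J_{R_c}$ and states attached to consistent $\beta$-summable $G_{J_{R_c}}$-orbits. Two inputs supply the rest: Lemma \ref{summ}, which has already listed all the $\beta$-summable orbits, and the results of Graczyk and Smirnov quoted in its proof (items of Corollary 5.1 in \cite{GS2}), from which we need only that for a Collet-Eckmann $R_c$ there is a non-atomic $\beta$-conformal probability measure on $J_{R_c}$ exactly when $\beta = HD(J_{R_c})$, in which case it is unique and equals the measure $m$ of \cite{GS2}.

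For part a), with $0$ pre-periodic, Lemma \ref{summ}(a) provides no $\beta$-summable $G_{J_{R_c}}$-orbit for any $\beta > 0$, so in every decomposition (\ref{decmppGX}) the index set is empty and $\omega(a) = \int_{J_{R_c}} P(a)\, dm$ for a non-atomic $\beta$-conformal probability measure $m$. By the cited result such $m$ exists precisely for $\beta = HD(J_{R_c})$ and is then the measure of \cite{GS2}; conversely, being non-atomic this $m$ assigns zero mass to the finite set $\mathcal C \cap J_{R_c}$, so it is $(G_{J_{R_c}},L)$-conformal by Lemma \ref{trans=conf4}(b) and the first half of Theorem \ref{decompGX} makes (\ref{M}) a $\beta$-KMS state. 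This gives a).

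For part b), $0$ is not pre-periodic. If $0 < \beta < HD(J_{R_c})$ there is neither a $\beta$-summable orbit (Lemma \ref{summ}(b)) nor a non-atomic $\beta$-conformal measure, so by Theorem \ref{decompGX} there is no $\beta$-KMS state. If $\beta = HD(J_{R_c})$ there is still no $\beta$-summable orbit, so, exactly as in part a), the unique $\beta$-KMS state is (\ref{M}). If $\beta > HD(J_{R_c})$ there is no non-atomic $\beta$-conformal measure, so the non-atomic term in (\ref{decmppGX}) vanishes, and by Lemma \ref{summ}(b) the only $\beta$-summable orbit is $\mathcal O = G_{J_{R_c}}0 = \bigcup_{n \ge 0} R_c^{-n}(0)$; hence the $\beta$-KMS states are exactly the $\omega_{\mathcal O}^{\varphi}$, $\varphi$ a state on $C^*\!\left((G_{J_{R_c}})^0_0\right)$, each with associated measure the purely atomic $m_{\mathcal O}$ supported on $\bigcup_{n \ge 0} R_c^{-n}(0)$. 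Since $0$ is the only critical point of $R_c$ in $\mathbb C$ and is not periodic, $\val(R_c^n,0) = 2$ for all $n \ge 1$, so Proposition \ref{isotropy}(c) gives $(G_{J_{R_c}})^0_0 \cong \mathbb Z/2\mathbb Z$ and $C^*\!\left((G_{J_{R_c}})^0_0\right) \cong \mathbb C^2$; the state space of $\mathbb C^2$ is a segment with two extreme points and $\varphi \mapsto \omega_{\mathcal O}^{\varphi}$ is affine and injective, so there are exactly two extremal $\beta$-KMS states.

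Beyond this bookkeeping the only substantive ingredient — that a Collet-Eckmann quadratic has exactly one non-atomic conformal measure, namely the Sullivan measure of exponent $HD(J_{R_c})$ — is imported from \cite{GS2} and already used in Lemma \ref{summ}, so it is invoked rather than proved here. The minor points to keep straight are that $0 \in J_{R_c}$ (forced by the Collet-Eckmann condition, since otherwise $|(R_c^n)'(0)| \to 0$), that Lemma \ref{trans=conf4} enters only through its harmless non-atomic direction, and that in the last sub-case of b) one must discard the non-atomic component before asserting the measure is purely atomic.
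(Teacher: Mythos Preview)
Your argument is correct and follows essentially the same route as the paper's proof: combine Theorem \ref{decompGX} with Lemma \ref{summ} and Corollary 5.1 of \cite{GS2}, and use Proposition \ref{isotropy}(c) to see that the isotropy group at $0$ is $\mathbb Z_2$, hence two extremal states in the last case. One small slip: in your parenthetical on $0 \in J_{R_c}$ you write $\left|(R_c^n)'(0)\right|$, but $(R_c^n)'(0)=0$ for all $n\ge 1$; the Collet-Eckmann growth is on $\left|(R_c^n)'(c)\right|$, and it is the boundedness of this quantity on any Fatou component that forces $0\in J_{R_c}$.
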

\begin{proof} Note that the isotropy group of $0$ in
  $G_{J_{R_c}}$ is $\mathbb Z_2$ by c) of Proposition
  \ref{isotropy}. Therefore a $\beta$-summable $G_{J_{R_c}}$-orbit will give rise to two
  extremal $\beta$-KMS states by Theorem \ref{decompGX}. Other than
  that all the
  statements follow by combining Lemma \ref{summ} and Theorem
  \ref{decompGX} with Corollary 5.1 of \cite{GS2}.
\end{proof}

Thanks to the results from \cite{GS2}, Theorem \ref{Acon} can be
extended to general rational maps $R$ satisfying
the Collet-Eckmann condition of \cite{GS1}, provided the critical
points in $J_R$ have the same valency. The only difference is
that the number of extreme $\beta$-KMS states increases, depending on the number and the valency of the critical
points in $J_R$, and on whether or not their orbits intersect. We have here restricted
attention to the quadratic case for concreteness and because it
simplifies the statement.

If we adopt the terminology of Bost
  and Connes from \cite{BoC}, the phase transition which occurs
  in Theorem \ref{Acon} at the inverse temperature $\beta
  =HD\left(J_{R_c}\right)$ when $0$ is not pre-periodic, is caused by
  spontaneous 
  symmetry breaking. To see this note that we can define an
  automorphism $\xi$ of $C^*_r\left(J_{R_c}\right)$ such that
$$
\xi(f)[x,k,\eta,y] = \frac{\eta'(x)}{\left|\eta'(x)\right|}
f[x,k,\eta,y] 
$$
when $f \in C_c\left(G_{J_{R_c}}\right)$. This automorphism commutes
with the conformal action and 
interchanges its two extremal $\beta$-KMS states when $\beta > HD\left(J_{R_c}\right)$ and $0$ is
not pre-periodic. Hence for all $\beta \geq HD\left(J_{R_c}\right)$
there is exactly one $\xi$-invariant $\beta$-KMS state for the conformal action.


\begin{remark}\label{example} Note that in case a) of Theorem
  \ref{Acon} there is a purely atomic $\beta$-conformal
measure for $\beta > HD\left(J_{R_c}\right)$ which is supported on the
backward orbit of the critical point $0$, cf. \cite{GS2}, and that
this measure is not $\left(G_{J_{R_c}},L\right)$-conformal because the
$G_{J_{R_c}}$-orbit of $0$ is not consistent. To show that there are
also rational maps with a $\left(G_{J_{R}},L\right)$-conformal
measure which is not conformal, consider the polynomial
$$
R(z) = z\left(1 +
    \frac{z}{2}\right)^2
$$ 
which has a critical point at $z = -2$. The critical
  value $0$ is a fixed point and $R^{-1}(0) =
  \left\{0,-2\right\}$. It follows that the point $0$ is its own $G_{J_R}$-orbit,
  i.e. $G_{J_R}0 = \{0\}$. Note that $0$ is a parabolic fixed point since
  $R'(0) = 1$. It follows that
  the Dirac measure at $0$ is $\left(G_{J_R},L\right)$-conformal for any
  exponent $\beta \in \mathbb R$. Since it is supported by a critical
  value it can not be $\beta$-conformal for any $\beta \neq 0$.   
\end{remark}


\section{Generalized gauge actions and their KMS states}

\subsection{Generalized gauge actions}
Consider again the setting of Section \ref{sec2} and let $f : X \to
\mathbb R$ be a continuous function. We can then define a homomorphism
$c_f : G_X \to \mathbb R$ such that 
\begin{equation}\label{c-f}
c_f[x,k,\eta,y] = \lim_{N \to \infty} \left(\sum_{i=0}^{N}
f\left(H^i(x)\right) -  \sum_{i=0}^{N-k}
f\left(H^i(y)\right)\right) .
\end{equation}
$c_f$ is continuous and we can consider the corresponding
one-parameter group $\alpha^f_t = \sigma^{c_f}_t$ of automorphisms on
$C^*_r\left(G_X\right)$. Following the general scheme laid out in
Section \ref{A} the KMS states of $\alpha^f$ can be determined from
the corresponding
non-atomic $\left(G_X,c_f\right)$-conformal measures and summable $G_X$-orbits. 

Let $\beta \in \mathbb R$. Following \cite{DU} we say that a finite Borel
measure $m$ on $X$ is \emph{$e^{\beta f}$-conformal} when
$$
m\left(H(A)\right) = \int_A e^{\beta f(x)} \ dm(x)
$$
for all Borel subsets $A$ of $X$ such that $H : A \to X$ is
injective. It is then easy to adopt the proofs of Lemma
\ref{n-mlemma4} and Lemma \ref{trans=conf4} to obtain the following.

\begin{lemma}\label{DUconf} Let $m$ be a finite Borel measure on $X$
  such that $m$ has no mass at the critical points or critical values
  of $H$ in $X$. Let
  $\beta \in \mathbb R \backslash \{0\}$. Then
  $m$ is $\left(G_X,c_f\right)$-conformal with exponent $\beta$ if and only if $m$ is
  $e^{\beta f}$-conformal.
\end{lemma}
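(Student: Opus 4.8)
The plan is to mimic, nearly verbatim, the argument establishing the equivalence of $(G_X,L)$-conformality and $\beta$-conformality in Lemmas \ref{n-mlemma4} and \ref{trans=conf4}, with the multiplicative ``distortion'' factor $|\eta'(x)|_g^{\beta}$ replaced throughout by the additive cocycle $c_f$, exponentiated. First I would record the explicit form taken by $(G_X,c_f)$-conformality: unwinding the definition \eqref{Gconf} for $G_X$ exactly as was done to obtain \eqref{sconformal}, a finite Borel measure $m$ on $X$ is $(G_X,c_f)$-conformal with exponent $\beta$ precisely when
\begin{equation*}
m(\eta(U)) = \int_U e^{\beta c_f[x,k,\eta,\eta(x)]} \ dm(x)
\end{equation*}
for every local transfer $\eta : U \to V$ of $H|_X$. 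The key point is the additivity relation $c_f[x,n-l,H^{-l}\circ H^n,y] = \sum_{i=0}^{n-1} f(H^i(x)) - \sum_{i=0}^{l-1} f(H^i(y))$ for a branch inverse, which is the additive analogue of the chain rule $|(H^{-l}\circ H^n)'(x)|_g = \prod_{i=0}^{n-1}|H'(H^i(x))|_g \cdot \prod_{i=0}^{l-1}|H'(H^i(y))|_g^{-1}$ used in \reflemma{n-mlemma4}.

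Next I would prove the analogue of \reflemma{n-mlemma4}: if $m$ is $e^{\beta f}$-conformal and $H^n$, $H^l$ are injective on open sets $U$, $V$ with $H^n(U) = H^l(V)$, then
\begin{equation*}
m(V) = \int_U e^{\beta\left(\sum_{i=0}^{n-1} f(H^i(x)) - \sum_{i=0}^{l-1} f(H^i(H^{-l}(H^n(x))))\right)} \ dm(x).
\end{equation*}
The proof is the same telescoping argument: reduce to compact $K \subseteq U$ by regularity, partition $K$ finely so that $e^{\beta f}$ is nearly constant on each $H^{-k}(H^n(K_i))$ and each $H^j(K_i)$, push $m$ forward one step at a time through the branch inverses $H^{-1}$ using the $e^{\beta f}$-conformality equation $m(H(A)) = \int_A e^{\beta f}\,dm$ (so that pushing back along $H^{-1}$ multiplies measures by $e^{-\beta f}$ evaluated at the appropriate point), accumulate the errors, and let the mesh go to zero. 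The only genuine adaptation is cosmetic: the bound $L = \sup(|H'|_g^{\pm\beta})$ used to control the error propagation is replaced by $L = \sup(e^{\pm\beta f})$ over the relevant finite union of forward and backward iterates of $K$, which is finite since $f$ is continuous and those sets have compact closure. Everything else in the $\overset{\delta}{\sim}$ bookkeeping carries over unchanged.

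Finally I would assemble the two implications as in \reflemma{trans=conf4}. For the forward direction, assuming $m$ is $(G_X,c_f)$-conformal and $m(H(\mathcal C)) = 0$: write $X\setminus\mathcal C$ as a countable union of open sets on which $H$ is univalent, partition an injectivity set $A$ accordingly, apply the conformality equation \eqref{Gconf} to the local transfer $H : V\cap X \to H(V)\cap X$ together with regularity, and sum. For the converse, assuming $m$ is $e^{\beta f}$-conformal and $m(\mathcal C) = 0$: first note $m\bigl(\bigcup_{j\ge 0} H^{-j}(\mathcal C)\bigr) = 0$, then given a local transfer $\eta$ with $H^n = H^l\circ\eta$, remove the finite bad set $E = U\cap\bigcup_{0\le j\le n} H^{-j}(\mathcal C)$, apply the new Lemma on the pieces where $H^n$ is injective, use regularity and countable additivity, and observe that $\eta(E)$ and $E$ are $m$-null. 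Since the hypothesis of the present lemma rules out mass at both critical points and critical values, \emph{both} implications apply simultaneously, giving the stated ``if and only if.'' I do not anticipate a serious obstacle here; the only thing requiring care is making sure the additive cocycle bookkeeping in the telescoping estimate is set up so the errors still telescope geometrically, i.e.\ that replacing products of derivative norms by exponentials of Birkhoff-type sums does not disturb the structure of the error terms $\epsilon', \epsilon''$ — and it does not, because $e^{\beta f}$ is bounded above and below on the relevant compact sets exactly as $|H'|_g^{\beta}$ was.
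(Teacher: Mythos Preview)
Your proposal is correct and follows exactly the approach the paper has in mind: the paper simply states that one ``adopt[s] the proofs of Lemma~\ref{n-mlemma4} and Lemma~\ref{trans=conf4},'' and your outline carries out precisely that adaptation, replacing $|H'|_g^{\beta}$ by $e^{\beta f}$, the bound $L$ by $\sup e^{\pm\beta f}$, and the chain rule by the telescoping identity for $c_f$. There is nothing to add.
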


In particular, the non-atomic $\left(G_X,c_f\right)$-conformal measures with exponent
$\beta$ coincide with the non-atomic $e^{\beta f}$-conformal Borel
measures.

Which $G_X$-orbits are
$\beta$-summable depends of course very much on the behaviour of
$f$. However, the case $f = 1$ is easy to handle as we do in the
following section.

\subsection{The gauge action}\label{gauge} 
 
The \emph{gauge action} on $C^*_r\left(G_X\right)$ is the generalised gauge
action $\alpha^1$ obtained from the homomorphism (\ref{c-f}) when $f$ is the constant function $1$. It is determined by the condition that
$$
\alpha^1_t(f)[x,k,\eta,y] = e^{ikt} f[x,k,\eta,y]
$$
when $f \in C_c\left(G_X\right)$.

Assume now that $S$ is the Riemann sphere and $H$ a rational map $R$
of degree at least $2$. Assume also that $X = J_R$, the Julia set of $R$. In this
case we can describe all $\beta$-KMS states for the gauge action on
$C^*_r\left(G_{J_R}\right)$ when $\beta \neq 0$.

\begin{lemma}\label{luybich} Let $m$ be a $e^{\beta}$-conformal Borel
  probability measure. Assume that $m$ is non-atomic. Then $\beta = \log d$
  and $m$ is the Lyubich measure of maximal entropy, cf. \cite{L}. 
\end{lemma}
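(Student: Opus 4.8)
The plan is to turn $e^\beta$-conformality into a statement about the transfer operator of $R$, using non-atomicity to discard the finitely many critical values. Let $\mathcal C_v\subseteq\overline{\mathbb C}$ be the (finite) set of all critical values of $R$. Since $m$ is non-atomic, $m(\mathcal C_v\cap J_R)=0$, and likewise $m\bigl(R^{-1}(\mathcal C_v)\bigr)=0$ because $R^{-1}(\mathcal C_v)$ is finite. Set $W=J_R\setminus\mathcal C_v$, an open subset of $J_R$ with $m(W)=1$. For each $y\in W$ pick a simply connected open set $B\subseteq\overline{\mathbb C}\setminus\mathcal C_v$ with $y\in B$; since $B$ contains no critical value, $R\colon R^{-1}(B)\to B$ is a genuine $d$-sheeted covering, so $R^{-1}(B)=\bigsqcup_{i=1}^d\widetilde B_i$ with $R|_{\widetilde B_i}$ a biholomorphism onto $B$. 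Using $R^{-1}(J_R)=J_R$, the sets $U=B\cap J_R$ and $V_i=\widetilde B_i\cap J_R$ satisfy $R(V_i)=U$, $R|_{V_i}$ injective, and $R^{-1}(U)=\bigsqcup_{i=1}^d V_i$. As $\overline{\mathbb C}$ is second countable, countably many such sets, say $U_j$ with sheets $V_{j,1},\dots,V_{j,d}$, cover $W$.

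Next I would read off the key identity. For a Borel set $A\subseteq U_j$ put $A_i=(R|_{V_{j,i}})^{-1}(A)$; then $R(A_i)=A$, $R$ is injective on $A_i$, and $R^{-1}(A)=\bigsqcup_i A_i$, so applying the defining equation of $e^\beta$-conformality to each $A_i$ gives $m(A)=m(R(A_i))=e^\beta m(A_i)$, hence $m\bigl(R^{-1}(A)\bigr)=d\,e^{-\beta}m(A)$. Disjointifying $\{U_j\}$ to a Borel partition $W=\bigsqcup_j W_j$ with $W_j\subseteq U_j$ and summing, $m\bigl(R^{-1}(A)\bigr)=d\,e^{-\beta}m(A)$ for every Borel $A\subseteq W$; since $A\setminus W$ and $R^{-1}(A\setminus W)$ are $m$-null for every Borel $A\subseteq J_R$, the same identity holds for all Borel $A\subseteq J_R$. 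Taking $A=J_R$ and using total invariance $R^{-1}(J_R)=J_R$ yields $1=d\,e^{-\beta}$, i.e. $\beta=\log d$.

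With $\beta=\log d$ the conformality equation becomes $m(R(A))=d\,m(A)$ for every Borel $A$ on which $R$ is injective; in particular $m$ is $R$-invariant and $\tfrac1d R^*m=m$, where $R^*$ is the pullback of measures. Iterating gives $m=\tfrac{1}{d^n}(R^n)^*m=\int\tfrac{1}{d^n}(R^n)^*\delta_x\,dm(x)$ for all $n$, and by Lyubich's equidistribution theorem, \cite{L}, the measures $\tfrac{1}{d^n}(R^n)^*\delta_x$ converge weak$^*$ to the measure of maximal entropy $\mu$ for every $x$ outside a finite exceptional set, which is $m$-null; dominated convergence (all the measures involved are probabilities) then forces $m=\mu$. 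Alternatively one may invoke directly that $\mu$ is the unique probability measure with constant Jacobian $d$, cf. \cite{L}.

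The step I expect to need the most care is the passage from the local identity $m\bigl(R^{-1}(A)\bigr)=d\,e^{-\beta}m(A)$ on each evenly covered piece $U_j$ to the global identity on all of $J_R$: one must check that the finitely many critical values — over which $R$ fails to be an honest covering — carry no $m$-mass and do not interfere with the disjointification. This is exactly where the non-atomicity hypothesis is used, and it is also the point that prevents atomic $e^\beta$-conformal measures from entering the conclusion.
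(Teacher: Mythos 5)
Your argument is correct and follows essentially the same route as the paper: remove the (finite, hence $m$-null) set of critical values, use the local $d$-sheeted covering structure to get $m\left(R^{-1}(B)\right)=d\,e^{-\beta}m(B)$ for all Borel $B$, take $B=J_R$ with $R^{-1}(J_R)=J_R$ to force $\beta=\log d$, and then identify $m$ with the measure of maximal entropy. Your final step via Lyubich's equidistribution theorem (or the uniqueness of the constant-Jacobian measure) is just a slightly more detailed version of the paper's direct appeal to the theorem of Freire--Lopes--Ma\~n\'e and Lyubich.
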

 \begin{proof} Let $\mathcal C$ be the critical points of $R$. Every
   point of $J_R \backslash R(\mathcal C)$ is
   contained in an open set $U \subseteq J_R
   \backslash R(\mathcal C)$ such that $R^{-1}(U)$ is a disjoint union
   $R^{-1}(U) = \sqcup_{i=1}^d V_i$ where each $V_i$ is open and $R :
   V_i \to U$ is a homeomorphism. Let $B \subseteq U$ be a Borel
   subset and set $B_i = R^{-1}(B)\cap V_i$. Since $m(B_i) = e^{-\beta} m(B) =
   m(B_1)$ we conclude that $m(R^{-1}(B)) = dm(B_1) = de^{-\beta}
   m(B)$. Every Borel subset of $J_R \backslash
   R(\mathcal C)$ is the disjoint union of a countable collection of
   Borel sets each of which is a subset of an open set $U \subseteq
   J_R \backslash R(\mathcal C)$ as above. It follows
   therefore that 
\begin{equation}\label{7}
m\left(R^{-1}(B)\right) = de^{-\beta} m(B)
\end{equation}
for every Borel set $B \subseteq J_R$ because
$m\left(R\left(\mathcal C\right)\right) = 0$. Taking $B =
J_R$ yields the conclusion that $\beta = \log d$ as
   asserted. Once this is established it follows from (\ref{7}) that
   $m$ is $R$-invariant and then from the theorem in \cite{FLM} and
   \cite{L} that $m$ is the Lyubich measure.  
\end{proof}

\begin{lemma}\label{summa} Let $\mathcal O$ be a $\beta$-summable
  $G_{J_R}$-orbit, $\beta \neq 0$. It follows that
  there is a critical point $c$ for $R$ which not pre-periodic such
  that $\mathcal O = G_{J_R}c$.
\begin{proof} Let $x \in \mathcal O$. Since $J_R$ does not contain any
  critical periodic orbits the d1)-case in Proposition \ref{isotropy}
  does not arise. It follows therefore from Proposition \ref{isotropy} that the
  homomorphism $c_1[x,k,\eta,y]$ $ = k$ which defines the gauge action
  $\alpha^1$ does not annihilate $\left(G_{J_R}\right)^x_x$ when $x$ is pre-periodic. Thus $\mathcal
  O$ is is not consistent and hence not $\beta$-summable when $x$ is
  pre-periodic. Assume therefore that $x$ is not pre-periodic. To
  finish the proof it suffices to show that $G_{J_R}x$ is not summable unless $x$ is pre-critical. Assume
  therefore that no element of the forward orbit $\left\{ R^n(x) : \ n
    =0,1,2, \dots \right\}$ is critical. Since $R^n(x)  \in \mathcal
  O$ for all $n$ and $l_x\left(R^n(x)\right) = e^{n}$ we see
  that 
$\sum_{z \in \mathcal O} l_x(z)^{\beta} = \infty$,
unless $\beta < 0$. Consider therefore now the case $\beta < 0$. Since
$R^{n-1}(x)$ is not critical, and the degree of $R$ at least 2, there is an
element $x_n \in R^{-1}\left(R^n(x)\right)$ other than $R^{n-1}(x)$
when $n \geq 1$. Note that
$$
N = \left\{ n \geq 1: \ x_n \ \text{is not critical} \right\}
$$
must be infinite because $x$ is not pre-periodic. Assume to reach a contradiction that there is a critical point in the
backward orbit of $x_n$ for every $n \in N$. Since there are only
finitely many critical points there must then be some $ m
< n$ in $N$ such that the same critical point $c$ is contained in both the
backward orbit of $x_m$ and the backward orbit of $x_n$. Note
that $c$ can not be pre-periodic since $x$ is not. There is therefore a
unique $k \in \mathbb N$ such that $R^k(c) = R^n(x)$ and a unique $k'
\in \mathbb N$ such that $R^{k'}(c) = R^m(x)$. Then $R^{n-m+k'}(c) =
R^n(x) = R^k(c)$ and hence $n-m+k' = k$. Since $R^{n-m+k'-1}(c) =
R^{n-1}(x)$ while $R^{k-1}(c) = x_n \neq R^{n-1}(x)$, this is a contradiction. There
must therefore be an $n \in N$ such that the backward orbit of
$x_n$ does not contain any critical point. Choose $z_k \in
R^{-k}\left(x_n\right)$ and note that $z_k$ is an element of $\mathcal O$ for all
$k$. Since
$$
l_x(z_k)^{\beta} = e^{\beta(n-k -1)}
$$
we conclude that $\sum_{z \in \mathcal O} l_x(z)^{\beta} = \infty$,
also when $\beta < 0$.
\end{proof}
\end{lemma}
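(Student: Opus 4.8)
The plan is to place the base point of $\mathcal O$ inside the dynamics of $R$ and then let Proposition \ref{isotropy} and $\beta$-summability squeeze it onto the orbit of a critical point. Fix $x\in\mathcal O$, so $\mathcal O=G_{J_R}x$. The first thing to record is that being \emph{pre-periodic} and being \emph{pre-critical} are invariants of the whole $G_{J_R}$-orbit: if $\eta$ is a local transfer with $\eta(x)=x'$ and $R^n(z)=R^m(\eta(z))$ near $x$, then $R^m(x')=R^n(x)$, so $x$ is pre-periodic iff $x'$ is, and likewise for pre-critical. Hence it is enough to exhibit one non-pre-periodic critical point lying in $\mathcal O$. \emph{Step 1 (the base point is not pre-periodic).} Since $J_R$ contains no critical periodic orbit, case d1) of Proposition \ref{isotropy} cannot occur for a point of $J_R$, so if $x$ were pre-periodic then $(G_{J_R})^x_x$ would be $\mathbb Z$ or $\mathbb Z\oplus\mathbb Z_d$, and in either case would contain an element $[x,k,\eta,x]$ with $k\neq 0$. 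As the homomorphism defining the gauge action is $c_1[x,k,\eta,y]=k$, this would make $\mathcal O$ inconsistent, hence not $\beta$-summable; contradiction. So $x$ is not pre-periodic, and $\{R^n(x):n\geq 0\}$ is an infinite set of distinct points, all in $\mathcal O$.

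\emph{Step 2 (reduce to pre-criticality; the case $\beta>0$).} It suffices to show $x$ is pre-critical, for then some $R^n(x)\in\mathcal O$ is a critical point of $R$ in $J_R$ and, by the orbit-invariance above, is not pre-periodic. Assume for contradiction that no $R^n(x)$ is critical, so $\val(R^n,x)=1$ for all $n$. By Lemma \ref{first}, any $z$ with $R^k(z)=R^n(x)$ and none of $z,R(z),\dots,R^{k-1}(z)$ critical lies in $\mathcal O$, via a transfer in $\mathcal T_{n-k}$, so $l_x(z)=e^{\,n-k}$. If $\beta>0$, the points $R^n(x)$ alone give $\sum_n l_x(R^n(x))^\beta=\sum_n e^{n\beta}=\infty$, contradicting $\beta$-summability.

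\emph{Step 3 (the case $\beta<0$).} Here one must manufacture infinitely many \emph{deep} preimages inside $\mathcal O$. Since $\deg R\geq 2$ and $R^{n-1}(x)$ is not critical, each $R^n(x)$ ($n\geq 1$) has a preimage $x_n\neq R^{n-1}(x)$; and $N=\{n\geq 1:x_n\ \text{not critical}\}$ is cofinite, because $x_n$ critical forces $R^n(x)=R(x_n)$ into the finite set $R(\mathcal C)$, which the injective forward orbit meets finitely often. The key claim is that for some $n\in N$ the preimage tree $\bigcup_{j\geq 0}R^{-j}(x_n)$ contains no critical point. Granting this, choose $z_k\in R^{-k}(x_n)$ for each $k$; all of $z_k,\dots,R^{k-1}(z_k)$ lie in that tree and $R^k(z_k)=x_n$ is non-critical, so $\val(R^{k+1},z_k)=1=\val(R^n,x)$, and Lemma \ref{first} gives $z_k\in\mathcal O$ with $l_x(z_k)=e^{\,n-k-1}$. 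The $z_k$ are pairwise distinct because $x_n$, as a preimage of the non-pre-periodic point $R^n(x)$, is not periodic; hence $\sum_k l_x(z_k)^\beta=\sum_k e^{\beta(n-k-1)}=\infty$, again contradicting $\beta$-summability. To prove the key claim, suppose that for every $n\in N$ some critical point lies in the preimage tree of $x_n$; since $\mathcal C$ is finite and $N$ infinite, one critical point $c$ lies in the preimage trees of $x_m$ and $x_n$ with $m<n$. Then $c$ is not pre-periodic (it maps eventually onto $x_m$, hence onto the non-pre-periodic point $R^m(x)$), so there are unique exponents with $R^{k-1}(c)=x_n$, $R^k(c)=R^n(x)$, $R^{k'}(c)=R^m(x)$; applying $R^{n-m}$ to the last and using that $c$ is not periodic yields $k=n-m+k'$, whence $R^{k-1}(c)=R^{\,k'+n-m-1}(c)=R^{n-1}(x)$, contradicting $R^{k-1}(c)=x_n\neq R^{n-1}(x)$.

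The main obstacle is exactly this last claim in the $\beta<0$ case — finding an $x_n$ whose full preimage tree avoids the critical set; the rest is routine bookkeeping with Proposition \ref{isotropy}, Lemma \ref{first}, and the summability estimates.
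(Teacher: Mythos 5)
Your proof is correct and follows essentially the same route as the paper's: exclude pre-periodic base points via Proposition \ref{isotropy} and consistency, dispose of $\beta>0$ with the forward orbit, and for $\beta<0$ produce an $x_n$ whose preimage tree avoids $\mathcal C$ by the same finiteness-of-critical-points contradiction, then sum over deep preimages $z_k$. The extra details you supply (orbit-invariance of pre-criticality, valency matching via Lemma \ref{first}, distinctness of the $z_k$) are points the paper leaves implicit, and they check out.
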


Let $\mathcal C_0$ be the (possibly empty) set of critical and
not pre-periodic points in $J_R$. Since the elements are not
pre-periodic the limit
$$
\VAL_{\infty}(c) = \lim_{n \to \infty} \val\left(R^n,c\right)
$$
exists for every $c \in \mathcal C_0$. We define an equivalence relation
$\sim$ on $\mathcal C_0$ such that $x \sim y$ if and only if
$\VAL_{\infty}(x) = \VAL_{\infty}(y)$ and $R^n(x) = R^m(y)$ for some
$n,m \in \mathbb N$. It follows from Lemma \ref{first} that two
elements $x,y \in \mathcal C_0$ are in the same $G_{J_R}$-orbit if and only if $x \sim y$. For each $\xi \in \mathcal C_0/{\sim}$ choose a
representative $c_{\xi} \in \mathcal C_0$ and set
$$
[\xi] = G_{J_R}c_{\xi} .
$$
It follows from Lemma \ref{summa} that these sets constitute the only
possible $\beta$-summable $G_{J_R}$-orbits, for any
$\beta \neq 0$.

\begin{lemma}\label{negpos} Let $d$ be the degree of $R$.
When $\beta > \log d$ the set $[\xi]$ is
$\beta$-summable for all $\xi \in \mathcal C_0/{\sim}$. When $\beta
\leq \log d$ the set $[\xi]$ is
$\beta$-summable  if and only if it is finite. 
\end{lemma}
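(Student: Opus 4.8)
The plan is to reduce $\beta$-summability of $[\xi]$ to a purely combinatorial count. Fix $\xi\in\mathcal C_0/{\sim}$, write $c=c_\xi$ and $d=\deg R$, and recall that $[\xi]=G_{J_R}c$ and that for $z\in[\xi]$ the number $l_c(z)$ equals $e^{-\ell(z)}$, where $\ell(z)$ is the value of $c_1$ on any element of $G_{J_R}$ with range $z$ and source $c$; concretely $\ell(z)=n-m$ for any $n,m$ with $R^n(z)=R^m(c)$ and $\val(R^n,z)=\val(R^m,c)$, which is well defined because $c$ is not pre-periodic. Thus $[\xi]$ is $\beta$-summable exactly when $\sum_{z\in[\xi]}e^{-\beta\ell(z)}<\infty$, and I would study $b_n:=\#\{z\in[\xi]:\ell(z)=n\}$.

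The first step is a structural description of $[\xi]$. Using Lemma \ref{first} (and the fact that no point of $G_{J_R}c$ is pre-periodic) I would show: the forward orbit of every non-critical $z\in[\xi]$ meets $\mathcal C$; the first critical point it meets lies again in $[\xi]$, hence in the finite set $\mathcal C_0\cap[\xi]$; and, conversely, every \emph{unobstructed} backward iterate of a point $c^{*}\in\mathcal C_0\cap[\xi]$ — that is, every $z\in R^{-k}(c^{*})$, $k\ge 0$, with $R^{j}(z)\notin\mathcal C$ for $0\le j<k$ — belongs to $[\xi]$. Along such an unobstructed backward chain $\ell$ decreases by exactly $1$ at each application of $R$, so $\ell(z)=\ell(c^{*})+k$ there. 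Since $\mathcal C_0\cap[\xi]$ is finite, $\ell$ is then bounded below by some $-M$, and $b_n\le\#(\mathcal C_0\cap[\xi])\cdot d^{\,n+M}<\infty$ for every $n$, so $\sum_{z\in[\xi]}e^{-\beta\ell(z)}=\sum_{n\ge -M}b_ne^{-\beta n}$ with all terms finite.

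The second step pins down the growth of $b_n$ and concludes. The levels of the (finitely many) critical points in $[\xi]$ and of the (finitely many) critical values of $R$ lying in $[\xi]$ form a finite set, so there is a threshold $N$ such that for $n\ge N$ every element of $\ell^{-1}(n)$ is non-critical and is not a critical value of $R$. For such $n$, each $z\in\ell^{-1}(n)$ has exactly $d$ preimages under $R$, all non-critical, each again in $[\xi]$ and at level $n+1$; and every point of $\ell^{-1}(n+1)$ arises uniquely this way. Hence $b_{n+1}=d\,b_n$ for $n\ge N$, so $b_n=d^{\,n-N}b_N$ for $n\ge N$. If $[\xi]$ is finite the sum is finite and $[\xi]$ is $\beta$-summable for every $\beta$. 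If $[\xi]$ is infinite then $b_N\ge 1$ (otherwise $b_n=0$ for all $n\ge N$ and $[\xi]$ would be finite), and
\[
\sum_{z\in[\xi]}e^{-\beta\ell(z)}=\sum_{-M\le n<N}b_ne^{-\beta n}+b_Ne^{-\beta N}\sum_{j\ge 0}\bigl(d\,e^{-\beta}\bigr)^{j},
\]
which is finite iff $d\,e^{-\beta}<1$, i.e. iff $\beta>\log d$. Combining the two cases gives both assertions: for $\beta>\log d$ every $[\xi]$ is $\beta$-summable, and for $\beta\le\log d$ a given $[\xi]$ is $\beta$-summable iff it is finite.

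The main obstacle is the first step — identifying precisely which points the groupoid orbit $G_{J_R}c$ contains. It is \emph{not} the full grand orbit of $c$: the valency-matching condition of Lemma \ref{first} excludes, for example, the forward orbit $R(c),R^2(c),\dots$ of the critical point, so the orbit is carried by the backward preimage tree, which is $d$-ary away from the critical points and critical values but distorted near them. Keeping exact track of this distortion — showing it is confined to a finite transient, after which the orbit is genuinely a full $d$-ary tree with $\ell$ dropping by $1$ at each step — is the delicate bookkeeping the proof hinges on, and presumably where the earlier flawed version of this section went astray.
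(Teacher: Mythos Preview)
Your proof is correct and follows the same approach as the paper: both show that $[\xi]$ lies in the backward orbit of the finitely many critical points in $[\xi]$, that the corresponding level sets grow like $d^n$, and that $\beta$-summability therefore reduces to the convergence of a geometric series with ratio $de^{-\beta}$. Your argument is more explicit than the paper's --- you slice by the level function $\ell$ and derive the exact recursion $b_{n+1}=db_n$ for large $n$, whereas the paper slices by $[\xi]\cap R^{-n}(\mathcal C_0)$ and simply asserts the two-sided estimate $C^{-1}d^n\le\#([\xi]\cap R^{-n}(\mathcal C_0))\le Cd^n$ --- but the substance is the same.
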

\begin{proof} Note that
$$
[\xi] \ \subseteq \bigcup_{c \in \mathcal C_0, n
  \in \mathbb N} R^{-n}(c) 
$$
and $l_c(z) = e^{-n}$ when $z \in R^{-n}(c) \cap G_{J_R}c$. 
There is then a $K_{\beta} > 0$ such that
$$
K_{\beta}^{-1} e^{-\beta n} \leq l_{c_{\xi}}\left(z\right)^{\beta} \leq
K_{\beta}e^{-\beta n}
$$
for all $z \in [\xi] \cap R^{-n}\left(\mathcal C_0\right)$ and all $n$. Since an element
$c \in \mathcal C_0$ is not periodic, there is also a $C > 0$ such that
$$
C^{-1} d^n \leq \# [\xi] \cap R^{-n}(\mathcal C_0) \leq C d^n
$$
for all $n$, provided that $[\xi]$ is not finite.  
Hence
$$
K_{\beta}^{-1}C^{-1} e^{n(\log d - \beta)} \leq \sum_{z \in [\xi] \cap R^{-n}(\mathcal C_0)} l_{c_{\xi}}\left(z\right)^{\beta}
\leq CK_{\beta} e^{n(\log d - \beta)} 
$$
for all $n$ when $[\xi]$ is infinite. This proves the lemma.
\end{proof}

Let $\mathcal C_{00}$ be the set of critical and not pre-periodic points $c$
with the property that $G_{J_R}c$ is finite. For each $\xi \in \mathcal C_0/{\sim}$, set $\VAL_{\infty}(\xi) =
\VAL_{\infty}\left(c_{\xi}\right)$. Note that it follows from the
proof of Proposition \ref{isotropy} that $\VAL_{\infty}(\xi)$ is the order
of the cyclic isotropy groups $\left(G_{J_R}\right)^x_x, x \in [\xi]$. We can then summarise our findings
as follows.

\begin{thm}\label{Bcon} Let $R$ be a rational map of degree $d \geq 2$
  on the Riemann sphere with Julia set $J_R$ and consider the gauge
  action $\alpha^1$ on $C^*_r\left(G_{J_R}\right)$.

\smallskip

\begin{enumerate} 
\item[$\bullet$] When $0 \neq \beta < \log d$ there are exactly
$$
\sum_{\xi \in \mathcal C_{00}/\sim} \VAL_{\infty}[\xi]
$$
extremal $\beta$-KMS states for $\alpha^1$, all purely atomic. In
fact, the corresponding measures on $J_R$ have finite support. 

\item[$\bullet$] When $\beta = \log d$ there are exactly 
$$
1 \ + \sum_{\xi \in \mathcal C_{00}/\sim} \VAL_{\infty}[\xi]
$$
extremal $\beta$-KMS states for $\alpha^1$. One is non-atomic and the
associated measure is the Lyubich measure. The others are all purely
atomic and the associated measures have finite support.

\item[$\bullet$] When $\log d < \beta  < \infty$ there are exactly
$$
\sum_{\xi \in \mathcal C_{0}/\sim} \VAL_{\infty}[\xi]
$$
extremal $\beta$-KMS states for $\alpha^1$, all purely atomic.

\end{enumerate} 

\end{thm}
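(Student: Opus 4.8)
The plan is to obtain \refthm{Bcon} by specialising the general description of KMS states in \refthm{decomp!} to $G = G_{J_R}$ and to the homomorphism $c_1[x,k,\eta,y] = k$ defining the gauge action $\alpha^1$, and then to identify the two families of extreme points with the help of the lemmas of this section. First I would invoke \refthm{decomp!} together with the observation recorded immediately after it: every extremal $\beta$-KMS state for $\alpha^1$ on $C^*_r\left(G_{J_R}\right)$ is either non-atomic, hence of the form $a \mapsto \int_{J_R} P(a)\, dm$ for a non-atomic $(G_{J_R},c_1)$-conformal probability measure $m$ with exponent $\beta$, or purely atomic, hence of the form $\omega^{\varphi}_{\mathcal O}$ for a consistent and $\beta$-summable $G_{J_R}$-orbit $\mathcal O$ and an extremal state $\varphi$ of $C^*\left(\left(G_{J_R}\right)^x_x\right)$ with $x \in \mathcal O$; the uniqueness clause of \refthm{decomp!} will then turn the whole problem into two enumeration questions.

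For the non-atomic part I would combine \reflemma{DUconf}, applied with $f \equiv 1$ (so that $e^{\beta f} = e^{\beta}$ and the hypothesis on masses at critical points and values is vacuous for a non-atomic measure), with \reflemma{luybich}. Together they say that the non-atomic $(G_{J_R},c_1)$-conformal probability measures with exponent $\beta$ are exactly the non-atomic $e^{\beta}$-conformal probability measures, that such a measure exists if and only if $\beta = \log d$, and that in that case it is unique, being the Lyubich measure. Hence there is no non-atomic extremal $\beta$-KMS state when $\beta \neq \log d$ and exactly one when $\beta = \log d$.

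For the purely atomic part I would first apply \reflemma{summa} to see that a $\beta$-summable $G_{J_R}$-orbit must be $G_{J_R}c$ with $c$ critical and not pre-periodic, i.e. $c \in \mathcal C_0$; by \reflemma{first} and the definition of $\sim$, two such orbits coincide precisely when their representatives are $\sim$-equivalent, so the candidate orbits are the sets $[\xi]$, $\xi \in \mathcal C_0/{\sim}$. Each such orbit is automatically consistent: by \refprop{isotropy}(c) the isotropy group at a point of $[\xi]$ is a torsion group — in fact, as recorded just before the theorem, cyclic of order $\VAL_{\infty}(\xi)$, which is finite because a not pre-periodic point meets the critical set only finitely often along its forward orbit — and the $\mathbb Z$-valued cocycle $c_1$ vanishes on any torsion group. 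Whether $[\xi]$ is actually $\beta$-summable is then decided by \reflemma{negpos}: all $[\xi]$ qualify when $\beta > \log d$, while for $0 \neq \beta \leq \log d$ precisely the finite ones do, i.e. those with $\xi \in \mathcal C_{00}/{\sim}$. For a qualifying orbit the relevant isotropy $C^*$-algebra is $C^*\left(\mathbb Z_{\VAL_{\infty}(\xi)}\right) \cong \mathbb C^{\VAL_{\infty}(\xi)}$, whose extreme states are its $\VAL_{\infty}(\xi)$ characters; since $\varphi \mapsto \omega^{\varphi}_{[\xi]}$ is affine and injective this contributes exactly $\VAL_{\infty}[\xi]$ distinct purely atomic extremal states, and distinct values of $\xi$ contribute disjoint families. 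Adding the non-atomic and purely atomic contributions in the three ranges $0 \neq \beta < \log d$, $\beta = \log d$ and $\log d < \beta < \infty$ then yields the three displayed counts.

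The step I expect to be the main obstacle is not any individual computation but the bookkeeping needed to make the assignment $(\mathcal O,\varphi) \mapsto \omega^{\varphi}_{\mathcal O}$, together with the Lyubich state in the case $\beta = \log d$, a genuine bijection onto the extreme points of the $\beta$-KMS simplex — that distinct data give distinct states and that no extreme point is overlooked. This is exactly what the uniqueness of the decomposition in \refthm{decomp!} provides, so the argument ultimately reduces to quoting that theorem with care; the remaining points (finiteness of $\VAL_{\infty}(\xi)$, consistency of the orbits, and the character count for a finite cyclic group) are routine.
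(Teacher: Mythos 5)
Your proposal is correct and follows essentially the same route as the paper, which presents \refthm{Bcon} precisely as the summary of Theorem \ref{decomp!} specialised to $c_1$ (via Lemma \ref{DUconf} with $f \equiv 1$) combined with Lemmas \ref{luybich}, \ref{summa} and \ref{negpos} and the observation that the isotropy groups over $[\xi]$ are cyclic of order $\VAL_{\infty}(\xi)$. One small caveat: Lemma \ref{luybich} only forces $\beta = \log d$ and gives uniqueness, so the existence of the non-atomic state at $\beta = \log d$ still rests on the known fact (from \cite{FLM}, \cite{L}) that the Lyubich measure is non-atomic and $e^{\log d}$-conformal, which your appeal to the two lemmas does not literally supply.
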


\begin{example} To give an example where there are KMS-states for
  which the associated measures have finite support we use the work of M. Rees. She shows in Theorem 2 of \cite{R} that for 'many' $\lambda
\in \mathbb C\backslash \{0\}$ the rational map
$$
R(z) = \lambda\left(1 - \frac{2}{z}\right)^2
$$
has a dense critical forward orbit. In particular, the Julia set $J_R$ is the
entire sphere. The critical points are $0$ and $2$, and
$R^{-1}(0) = \{2\}$. Hence the $G_{J_R}$-orbit of $0$ consists only of
the point $0$. Since $\VAL_{\infty}(0) =2$ there are for all $\beta
\neq 0$ exactly two extremal 
$\beta$-KMS states for the gauge action on $C^*_r\left(G_{J_R}\right)$
such that the associated measure is the Dirac measure at $0$. 
\end{example}

\end{document}